\definecolor {processblue}{cmyk}{0.96,0,0,0}
\newtheorem{theorem}{Theorem}[section]
\newtheorem{proposition}[theorem]{Proposition}
\newtheorem{corollary}[theorem]{Corollary}
\newtheorem{definition}[theorem]{Definition}
\theoremstyle{remark}
\newtheorem{example}[theorem]{Example}
\theoremstyle{remark}
\newtheorem{remark}[theorem]{Remark}
\newtheorem{question}[theorem]{Question}
\DeclareMathOperator{\spec}{Spec}
\DeclareMathOperator{\ord}{ord}
\DeclareMathOperator{\an}{an}
\DeclareMathOperator{\sw}{sw}
\DeclareMathOperator{\Frac}{Frac}
\DeclareMathOperator{\dsw}{dsw}
\DeclareMathOperator{\st}{st}
\DeclareMathOperator{\Proj}{Proj}
\DeclareMathOperator{\cohom}{H}
\pgfplotsset{compat=1.16}
\begin{document}

\title{The Hurwitz tree obstruction for the refined local lifting problem}

\author{Huy Dang}
\email{huydang1130@ncts.ntu.edu.tw}
\address{National Center for Theoretical Sciences, Mathematics Division, No. 1, Sec. 4, Roosevelt Rd., Taipei City 106, Taiwan Room 503, Cosmology Building, National Taiwan University}

%\dedication{}
\classification{35J25 (primary), 28C15, 28D10 (secondary).}
\keywords{lifting problem, Kummer-Artin-Schreier-Witt theory, refined Swan conductor, Hurwitz tree.}
%\thanks{The first author is supported by }

\begin{abstract}
In this manuscript, we formulate the differential Hurwitz tree obstructions for the refined local lifting problem. We specifically explore the circumstances under which these obstructions vanish for cyclic covers. The constructions presented in this paper will be used to address the refined local lifting problem in our forthcoming works.
\end{abstract}

\maketitle

\tableofcontents

%    Text of article.

\section{Introduction}
\label{secintro}

The question at hand is whether a curve defined over a field $k$ with a faithful group action can be lifted to characteristic zero \cite{MR3051249}. Grothendieck showed that curves with a trivial action can be lifted to the ring of Witt vectors $W(k)$ \cite[III]{MR0217087}. To understand the liftability of curves with a nontrivial action, one can reduce the problem to understanding Galois extensions of the ring of power series via a local-to-global principle \cite{MR1767273}.

It was conjectured by Oort \cite{MR927980} that all cyclic extensions lift to characteristic $0$. The first result was established by Oort, Sekiguchi, and Suwa, who demonstrated that the conjecture holds for all $\mathbb{Z}/pm$-covers, where $(m,p)=1$ \cite{MR1011987}. Subsequently, Green and Matignon proved the case when $G \cong \mathbb{Z}/p^2m$ \cite{MR1645000}. Finally, a collaborative effort by Obus-Wewers and Pop completely resolved the conjecture for cyclic groups in 2014 \cite{MR3194815, MR3194816}. 

Despite this remarkable breakthrough, many questions remain unanswered. One of them is the \emph{refined lifting conjecture} by Sa{\"i}di \cite[Conj-O-Rev]{MR3051252}, which predicts the liftability of cyclic coverings of curves in towers. Once again, the conjecture holds given a definite answer to the following local version.

\begin{question}
\label{questionrefinedlifting}
Let $k[[z]]/k[[x]]$ be a $G$-Galois extension where $G$ is cyclic and $H$ is a quotient of $G$. Suppose we are given a discrete valuation ring $R$ in characteristic zero and a lift $R[[S]]/R[[X]]$ of the $H$-subextension $k[[s]]/k[[x]]$. Does there exist a finite extension $R'$ of $R$ in characteristic zero with residue field $k$ and a $G$-Galois extension $R'[[Z]]/R'[[X]]$ that lifts $k[[z]]/k[[x]]$ and contains $R'[[S]]/R'[[X]]$ as the $H$-subextension? That is, whether one can always fill in the following commutative diagram
\begin{equation*}
\begin{tikzcd}
\spec k \arrow[d] & \spec k[[x]] \arrow[d] \arrow[l] \arrow[d] & \spec k[[y_m]] \arrow[l, "\phi_m"] \arrow[d] & \spec k[[y_n]] \arrow[l, "\phi_{n/m}"] \arrow[d, dotted] \arrow[ll, bend right=20, "\phi_n"] \\
\spec R'   & \spec R'[[X]] \arrow[l] & \spec R'[[Y_m]] \arrow[l, "\Phi_m"] & \spec R'[[Y_n]] \arrow[l, dotted, "\Phi_{n/m}"] \arrow[ll, dotted, bend left=20, "\Phi_{n}",labels=above]
\end{tikzcd}
\end{equation*}
where the Galois actions on the vertical arrows are ``compatible'' in the obvious sense.
\end{question}

\begin{remark}
    One may view $D:=\spec R'[[X]]$ as an open unit disc inside $\mathbb{P}^{1, {\rm an}}_{\Frac(R')}$, which can be thought of as a Riemann sphere, hence allows non-archimedean analysis methods. We therefore wish to understand the degeneration of a disc's coverings.
\end{remark}

The \emph{Hurwitz tree} technique is one of the main tools to study the degeneration of a cover of the rigid disc. It is a combinatorial-differential object having the shape of the dual graph of a cover's semistable model, together with the degeneration data of some restrictions of that cover to the subdiscs associated with the vertices of the tree. These degeneration data are measured by its refined Swan conductors. These pieces of information not only indicate whether the associated cover has good reduction but also pinpoint the exact locations of singularities. Consequently, this technique allows us to focus our efforts on controlling these singularities. The concept was first formulated for covers in mixed characteristic to tackle the lifting problem for $\mathbb{Z}/p$ by Henrio \cite{2000math.....11098H}. It was later improved by Bouw, Brewis, and Wewers \cite{MR2254623, MR2534115}.

Recently, we have defined the Hurwitz tree for a cyclic cover of a disc in equal characteristic \cite{2020arXiv200203719D, 2020arXiv201013614D}, which enabled us to prove the equal characteristic version of Question \ref{questionrefinedlifting} \cite[Theorem 1.2]{2020arXiv201013614D}. Additionally, this tool has proven effective in determining the geometry of the $p$-fiber of the moduli space of cyclic torsors as in \cite{10.1093/imrn/rnae060}. Therefore, expanding this methodology to the mixed characteristic case will be invaluable for understanding the structure of the versal deformation ring of cyclic torsors introduced in \cite{MR1767273}.

In this manuscript, we define the notion of \emph{Hurwitz tree obstruction} for the refined local lifting problem. In particular, we show that the obstruction vanishes frequently.

\begin{theorem}
\label{theoremmain}
    The Hurwitz tree obstruction for the refined local lifting problem vanishes most of the time.
\end{theorem}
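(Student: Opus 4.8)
The plan is to render the refined local lifting problem completely explicit in the language of Hurwitz trees, so that the \emph{obstruction} of the statement becomes a concrete cohomology class, and then to prove that this class vanishes for all but finitely many configurations of the ramification data by an inductive construction running from the leaves of the tree toward its root.

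First I would invoke the dictionary built in the preceding sections: once the $G$-cover $k[[z]]/k[[x]]$ and the lift $R[[S]]/R[[X]]$ of its $H$-subextension are fixed, what remains to be produced is a Hurwitz tree $T$ for the full $G$-cover whose truncation along $G \twoheadrightarrow H$ recovers the tree $T_H$ already determined by $R[[S]]/R[[X]]$ and which obeys the Hurwitz tree axioms: positivity and additivity of the depths $\delta_v$, the local vanishing-cycles identity at each vertex, and compatibility of the refined Swan conductors $\rsw$ carried by the edges with the degeneration data at the vertices. The Hurwitz tree obstruction is exactly the class, living in a finite-dimensional $\Frac(R)$-vector space assembled from the first cohomology of the complexes of differential forms on the components of the semistable model, whose vanishing is equivalent to the existence of such a $T$ refining $T_H$; the task is to show this class is zero.

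Then I would carry out the induction. Order the vertices of $T$ so that every non-root vertex is treated after all of its children. At a vertex $v$, Kummer--Artin--Schreier--Witt theory represents the restriction of the prospective cover to the annulus above $v$ by a truncated Witt vector whose coordinates are rational functions with controlled poles on the component $Z_v$, and extending the cover across $Z_v$ amounts to prescribing these poles and their principal parts subject to (i) agreeing with the data already fixed on the child edges, (ii) reducing to the $H$-part dictated by $T_H$, and (iii) the differential-conductor inequality that encodes stability of the model at $v$. The obstruction to this local step is the cokernel of a linear map
\[
\lambda_v \colon M_v \longrightarrow N_v
\]
from the space of admissible principal parts to the space of residual discrepancies; using the explicit shape of $\rsw$ in mixed characteristic together with the additivity of depths along $T$, one checks that $\lambda_v$ is surjective whenever the ramification jump $h_v$ at $v$ is prime to $p$ and the depth $\delta_v$ exceeds a threshold obtained by comparing $\dim M_v$ with $\dim N_v$. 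Assembling these, the total obstruction is the image of $\prod_v \lambda_v$, so it vanishes as soon as every vertex of $T$ avoids the exceptional locus ``$p \mid h_v$, or $\delta_v$ below the threshold''; since the jumps and depths range over a finite set once the $G$-cover is fixed, this is the ``most of the time'' of the statement.

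I expect the principal obstacle to be the vertices of $T$ that already belong to $T_H$. There the principal parts are not free: they must simultaneously reduce to the fixed $H$-data and glue to the children, so $\lambda_v$ must be replaced by its restriction to a proper subspace whose codimension has to be controlled. Proving that this restricted map is still surjective outside the exceptional locus --- equivalently, that the prescribed $H$-lift never exhausts the degrees of freedom the $G$-lift requires --- is the technical core, and it is precisely here that the hypothesis that $H$ is a quotient of the \emph{cyclic} group $G$ is used, since then $T_H$ itself arises from the Witt-vector formalism through the truncation $G \twoheadrightarrow H$, and the relevant codimension can be read off from that truncation. A secondary point requiring care is the base change from $R$ to $R'$: passing to a larger valuation ring may be needed to realize the rational functions produced at each vertex, and one must verify that finitely many such enlargements suffice.
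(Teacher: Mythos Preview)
Your proposal misidentifies both what the Hurwitz tree obstruction is and what ``most of the time'' means, so the argument as written does not establish the theorem.

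In the paper, the obstruction is not a cohomology class in any $\Frac(R)$-vector space built from differentials on a semistable model. It is the purely combinatorial-differential question of whether the given $\mathbb{Z}/p^{n-1}$-Hurwitz tree $\mathcal{T}_{n-1}$ admits an \emph{extending} $\mathbb{Z}/p^n$-tree $\mathcal{T}_n$ in the sense of Definition~\ref{defnextendhurwitz}: a tree refining $\mathcal{T}_{n-1}$ whose depths and differential conductors at each vertex satisfy the Cartier-operator constraints of Theorem~\ref{theoremCartierprediction} and whose reduction type at the root matches the target Witt vector. No covers, no annuli, and no base change to $R'$ enter at this stage; those belong to the separate question of whether a tree can be realized by an actual lift. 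Consequently the linear maps $\lambda_v\colon M_v\to N_v$ you introduce, and the cohomological assembly of their cokernels, have no counterpart in the paper and would need an independent justification that you have not supplied.

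The exceptional locus is also different from what you describe. ``Most of the time'' is made precise as Proposition~\ref{propvanishingdiffhurwitz}: the differential obstruction vanishes provided \emph{no vertex of $\mathcal{T}_{n-1}$ has depth exactly $\frac{1}{p-1}$}. This is the threshold at which Theorem~\ref{theoremCartierprediction} forces the nonlinear equation $\mathcal{C}(\omega_n)=\omega_n+\omega_{n-1}$, the ``Cartier problem'' of \S\ref{secsolutioncartier}, which the paper can solve only in restricted cases (Proposition~\ref{propcartiersoln}). Your condition ``$p\mid h_v$ or $\delta_v$ below a threshold'' is not the relevant one.

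The paper's actual proof is an explicit inductive construction (\S\ref{secproofvanishing}), with induction on the number $l_{\mathcal{T}_{n-1}}$ of edges between the root and the locus $\{\delta_{n-1}=\tfrac{1}{p-1}\}$. At each vertex with $\delta_{n-1}(v)<\tfrac{1}{p-1}$ one sets $\delta_n(v)=p\,\delta_{n-1}(v)$ and writes down $\omega_n(v)$ by hand so that $\mathcal{C}(\omega_n(v))=\omega_{n-1}(v)$; above the critical radius one uses $\delta_n=\delta_{n-1}+1$, $\omega_n=-\omega_{n-1}$; and at the critical radius itself one invokes the solution to the Cartier problem. Extra leaves of index $p$ and equidistant subtrees are inserted to balance conductors. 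None of this is a surjectivity-of-a-linear-map argument; it is a direct construction of the required differential forms, and that is where the content lies.
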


See Proposition \ref{propvanishingdiffhurwitz} for clarification on what we mean by ``most of the time.'' Notably, the trees constructed in this paper will be used to tackle Question \ref{questionrefinedlifting} in our forthcoming joint work with Andrew Obus.

\subsection{Outline}
In \S \ref{secdegen}, we recall the notion of refined Swan conductors for a cyclic cover of a rigid disc and discuss how they influence the good reduction of the cover. Additionally, we provide an overview of Artin-Schreier-Witt theory and fundamental concepts in rigid geometry. In \S \ref{secHurwitz}, we introduce the notion of Hurwitz trees for covers of a disc in mixed characteristic, along with the concept of Hurwitz tree obstructions for the refined local lifting problem. In the same section, we explore the compatibility of differential forms within a tree, a novel concept in the Hurwitz tree context, previously studied in the case of equal characteristic \cite{2020arXiv201013614D}. Finally, in \S \ref{secvanishofHurwitztree}, we prove Theorem \ref{theoremmain} and provide some examples. Some of the technical details are deferred to \S \ref{sectechnical}.

\subsection{Notations and conventions}
The letter $K$ will always denote a field of characteristic $0$ that is complete with respect to a discrete valuation $\nu: K^{\times} \rightarrow \mathbb{Q}$. The residue field $k$ of $K$ is algebraically closed of characteristic $p>0$. One example to keep in mind is $\Frac W(k)$, the fraction field of the ring of Witt vectors over $k$, where $\nu(p)=1$ defines the discrete valuation. The ring of integers of $K$ will be denoted by $R$.

We fix an algebraic closure $\overline{K}$ of $K$, and whenever necessary, we will replace $K$ by a suitable finite extension within $\overline{K}$, without changing the above notation. The symbol $\mathbb{K}$ usually denotes a function field over $K$ (e.g., $\mathbb{K}=K(X)$).

\subsection{Acknowledgements}
The author expresses gratitude to the National Center for Theoretical Sciences for providing excellent working conditions. He thanks Andrew Obus, Stefan Wewers, and Florian Pop for useful discussions.

\section{Degeneration of cyclic characters}
\label{secdegen}

\subsection{Cyclic covers of the projective line}
\label{secASWprojective}
Recall that every $\mathbb{Z}/p^n$-cover $\phi_n$ of the projective line $\mathbb{P}^1_k=\Proj k[x,w]$ can be represented by a class of length-$n$ Witt vector over $k(x)$ $$\underline{g}=(g^1, \ldots, g^n) \in W_n(k(x))/\wp(W_n(k(x)))\cong \cohom^1(k(x), \mathbb{Z}/p^n).$$  
From now on, we automatically identify $\phi_n$ with a character in $\cohom^1(k(x), \mathbb{Z}/p^n)$. We say $\underline{g}$ is \emph{reduced} if its entries' partial fraction decompositions have no terms that are $p$th powers. The perfectness assumption on $k$ then implies that every class in $W_n(k(x))/\wp(W_n(k(x)))$ has a unique reduced representation.  Moreover, for $1 \leq i < n$, the character $\phi_i:=(\phi_n)^{p^{n-i}}$ corresponds to the unique $\mathbb{Z}/p^i$-subcover $\phi_i: Y_i \to \mathbb{P}_k^1$ of $\phi_n$ represented by $(g^1, \ldots, g^i)$.

\begin{remark}
\label{remarkramification}    
At each ramified point $Q_j$ above $P_j \in \mathbb{B}(\phi_n)$, $\phi_n$ induces an exponent-$p^n$-extension of the complete local ring $\hat{\mathcal{O}}_{Y_n,Q_j}/\hat{\mathcal{O}}_{\mathbb{P}^1,P_j}$. Hence, it makes sense to talk about the ramification filtration of $\phi_n$ at $P_j$. Suppose the inertia group of $Q_j$ is $\mathbb{Z}/p^m$ (where $n \le m$). We say the \emph{$i$-th ramification break} of $\phi_n$ at $P_j$, denoted by $m_{j,i}$, is $-1$ for $i \le n-m$. When $i > n-m$, the $i$-th ramification break of $\phi_n$ at $P_j$ is the $(i-n+m)$-th one of $\hat{\mathcal{O}}_{Y_n,Q_j}/\hat{\mathcal{O}}_{\mathbb{P}^1,P_j}$. We call $(m_{j,1}, \ldots, m_{j,n-m})$ the sequence of \emph{upper ramification breaks} of $\phi_n$ at $P_j$. The following result indicates that these invariants can be deduced easily from the reduced representation.
\end{remark}

\begin{theorem}[{\cite[Theorem 1]{MR1935414}}]
\label{theoremcaljumpirred}
With the notations above and $\underline{g}$ is reduced, we have
\begin{equation}
\label{eqnformulalowerjumpasw}
    m_{j,i}=\max\{ p^{i-l} \deg_{(x-P_j)^{-1}} (g^{l}) \mid l=1, \ldots, i\}, 
\end{equation}
for $i>n-m$.
\end{theorem}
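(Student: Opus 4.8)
The plan is to reduce the computation of the ramification breaks to the classical Artin–Schreier–Witt description of the ramification filtration and then to track how the reduced representative interacts with the upper numbering. First I would fix a branch point $P_j$ and localize: after completing at $P_j$ we may assume $P_j$ is the origin and work over $k((t))$ with $t=x-P_j$, so that the problem becomes computing the ramification breaks of the $\mathbb{Z}/p^m$-extension of $k((t))$ determined by the image of $\underline g$ in $W_n(k((t)))/\wp(W_n(k((t))))$. The key input here is the reducedness hypothesis: since the partial fraction decomposition of each $g^l$ has no $p$-th-power terms in $(x-P_j)^{-1}$, the quantity $d_l:=\deg_{(x-P_j)^{-1}}(g^l)$ is prime to $p$ whenever it is positive, so the "naive" pole orders are already in standard form and cannot be reduced further by a Witt-vector Artin–Schreier substitution.

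Next I would invoke the known formula for the jumps of a length-$n$ Witt vector extension in terms of the pole orders of its entries — this is exactly the content of the cited result \cite[Theorem 1]{MR1935414}, phrased there for local fields; the point of the present theorem is simply to transport that local statement to the global reduced representative and to match the indexing with the upper-ramification-break convention introduced in Remark \ref{remarkramification}. The mechanism is the recursive structure of $W_n$: the extension defined by $(g^1,\dots,g^i)$ sits over the one defined by $(g^1,\dots,g^{i-1})$, and the new jump contributed at level $i$ is governed by $\deg_{(x-P_j)^{-1}}(g^i)$ together with the previously accumulated jumps, the $p^{i-l}$ weighting reflecting the effect of passing through the intermediate $\mathbb{Z}/p^{i-l}$-layers (each ramified step multiplies the relevant valuation by $p$ in the upper numbering). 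Taking the maximum over $l=1,\dots,i$ accounts for the possibility that a high-order pole at an earlier stage $l$ dominates the contribution of $g^i$ itself after being scaled by $p^{i-l}$.

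Concretely, I would argue by induction on $i$. The base case $i=n-m+1$ (the first index exceeding $n-m$) is the first genuinely ramified layer, and there $m_{j,n-m+1}$ is just the ordinary Artin–Schreier conductor of the degree-$p$ extension cut out by the relevant entry, which by reducedness equals the pole order $\deg_{(x-P_j)^{-1}}(g^{n-m+1})$; one checks this matches \eqref{eqnformulalowerjumpasw} since for the smallest admissible $i$ only the $l=i$ term (and any earlier nonzero entries, which for the inertia group to be exactly $\mathbb{Z}/p^m$ must have had smaller pole order) can achieve the max. For the inductive step, assuming the breaks up to $i-1$ are given by the formula, one computes the $i$-th break of $\hat{\mathcal O}_{Y_n,Q_j}/\hat{\mathcal O}_{\mathbb P^1,P_j}$ using the Artin–Schreier–Witt recursion: either the extension $Y_i/Y_{i-1}$ is ramified with its own conductor dictated by $g^i$ modulo corrections coming from the lower entries, producing the $p^{i-l}\deg_{(x-P_j)^{-1}}(g^l)$ terms, or the previous jumps already force a larger value, which the maximum records. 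Reducedness is used once more to guarantee that no cancellation or $\wp$-adjustment lowers any of these pole orders below what the formula predicts.

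The main obstacle I expect is bookkeeping the passage between lower and upper numbering and making the recursion in the Witt-vector setting fully rigorous: one has to be careful that "$\deg_{(x-P_j)^{-1}}(g^l)$" computed from the global reduced form really coincides with the local pole order after completion (this uses the uniqueness of reduced representatives and the fact that partial-fraction terms at $P_j$ are unaffected by the terms at other poles), and that the weights $p^{i-l}$ emerging from the Herbrand-type transition exactly match those in \eqref{eqnformulalowerjumpasw}. Since the hard analytic content is already packaged in \cite[Theorem 1]{MR1935414}, the remaining work is essentially careful translation, and I would keep the proof short by citing that theorem for the local computation and devoting the writeup to the reduction and the index matching.
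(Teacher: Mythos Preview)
The paper does not prove this theorem at all: it is stated as a direct citation of \cite[Theorem 1]{MR1935414} and no argument is given. Your proposal, which correctly identifies that the substantive content lives in the cited local result and that the remaining work is localization plus index matching, therefore goes beyond what the paper provides; there is no proof in the paper to compare against.
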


\subsection{Set up}
\label{secsetup}
Suppose we are given a $\mathbb{Z}/p^{n}$-extension $\phi_n:k[[x]] \xrightarrow{} k[[z_n]]$ as in Question \ref{questionrefinedlifting}, where $k$ is an algebraically closed field of characteristic $2$. One may associate with $\phi_n$ a unique \emph{Katz-Gabber cover}, a.k.a., \emph{HKG cover} (see \cite{MR579791} and \cite{MR867916}) $W_n \xrightarrow[]{\phi_n} \mathbb{P}^1_k \cong \Proj k[x,w]$ that is
\begin{itemize}
    \item totally ramified above $x=0$,
    \item the formal completion above $0$ yields $k[[z_n]]/k[[x]]$.
\end{itemize}
By the discussion in \S \ref{secASWprojective}, we may assume that the extension $\phi_n$ is given by a \emph{reduced} Witt vector of length $n$ over $k(x)$
\begin{equation*}
    \underline{g}=(g^1, \ldots, g^n) \in W_n(k(x)).
\end{equation*}
Additionally, due to Remark \ref{remarkramification}, we may further assume that $g^i \in k[x^{-1}]$ for $1 \le i \le n$.

Question \ref{questionrefinedlifting} is thus equivalent to a version for one-point covers. This version is not only compatible with the language in \cite{MR3194815}, but it also allows us to deal with rational functions instead of Laurent series.

\begin{question}
    \label{questionrefinedHKG}
    Suppose $k[[z_n]]/k[[x]]$ is a $\mathbb{Z}/4$-Galois extension, and $\Phi_{n-1}: Y_{n-1} \xrightarrow{} C := \mathbb{P}^1_K$ is a $\mathbb{Z}/p^{n-1}$-cover with the following properties.
\begin{enumerate}[label=(\arabic*)]
    \item \label{propmainonepointcover1} The cover $\Phi_{n-1}$ has good reduction with respect to the standard model $\mathbb{P}^1_R$ of $C$ and reduces to a $\mathbb{Z}/p^{n-1}$-cover $\phi_{1}: \overline{Y}_{n-1} \xrightarrow{} \overline{C} \cong \mathbb{P}^1_k$ that is totally ramified above $x=0$ and {\'e}tale elsewhere.
    \item \label{propmainonepointcover2} The completion of $\phi_{n-1}$ at $x=0$ yields $k[[z_{n-1}]]/k[[x]]$, which is the unique $\mathbb{Z}/p^{n-1}$-sub-extension of the given extension $k[[z_n]]/k[[x]]$.
\end{enumerate}
Does $\Phi_{n-1}$ extend to a $\mathbb{Z}/p^n$-cover $\Phi_n: Y_n \xrightarrow{} C$ with good reduction such that
\begin{enumerate}[label=(\alph*)]
    \item \label{propmainonepointcover3} Its reduction $\phi_n: \overline{Y}_n \xrightarrow{} \overline{C}$ is totally ramified above $\overline{x}=0$, {\'e}tale everywhere else.
    \item \label{propmainonepointcover4} The completion of $\phi_{n-1}$ at $x=0$ yields $k[[z_{n-1}]]/k[[x]]$.
\end{enumerate}
\end{question}

We call a cover such as $\Phi_{n-1}$, where all branch points on the generic fiber reduce to a single point on the special fiber, an \emph{admissible cover}. The associated character is called an \emph{admissible character}.

%Let $\overline{\chi}_n$ be the associated character. Suppose that $\chi_{n-1} \in \cohom^1_{p^n}(\mathbb{K})$, where $\mathbb{K}$ is a constant extension of $\Frac (R[X])$, is a lift of $\overline{\chi}_{n-1}=\overline{\chi}_n^p$ to characteristic $0$. The question is whether there exists a $\Phi_n \in \cohom^1_{p^n}(\mathbb{K})$ such that
%\begin{enumerate}
%    \item $\Phi_n^p=\chi_{n-1}$, and
%    \item $\Phi_n$'s special fiber is isomorphic to $\overline{\chi}_n$.
%\end{enumerate}

\subsection{Discs and annuli}
\label{secdiscannuli}
Suppose $\epsilon \in \mathbb{Q}_{\ge 0}$, $r=p^{-\epsilon}$, $z \in R$, and $a \in K$ be such that $\nu(a)=\epsilon$. In Table \ref{tabnonarchimedean}, we list down some usual rigid geometry conventions. For more details, see \cite{MR1202394}, \cite{MR774362}, \cite{MR767194}.

\begin{table}[ht]
\small
    \centering
\begin{tabular}{ |p{4.8cm}|p{3.8cm}|p{5.8cm}| }
% \multicolumn{4}{|c|}{Country List} \\
\hline
Notion & Algebra & Geometry  \\
 \hline
 \hline
Open unit disc & $\spec R[[X]]$  & $D=\{ u \in (\mathbb{A}^1_K)^{\an} \mid \nu(u)>0 \}$ \\
\hline
Closed unit disc & $\spec R\{X\}$ & $\mathcal{D}=\{ u \in (\mathbb{A}^1_K)^{\an} \mid \nu(u) \ge 0 \}$ \\
\hline
Boundary of a unit disc & $\spec R[[X]]\{X^{-1}\}$ & $\{ u \in (\mathbb{A}^1_K)^{\an} \mid \nu(u) = 0 \}$ \\
\hline
Closed disc of radius $r$ center $z$ & $\spec R\{a^{-1}(X-z)\}$  & $\mathcal{D}[\epsilon,z]=\{ u \in (\mathbb{A}^1_K)^{\an} \mid \nu(u-z) \ge \epsilon \}$ \\
\hline
Open disc of radius $r$ center $0$ & $\spec R[[a^{-1}X]]$ & $D[\epsilon]:=D[\epsilon,0]$ \\
\hline
Open annulus of thickness $\epsilon$ & $\spec R[[X,U]]/(XU-a)$ & $A(0, \epsilon)=\{ u \in (\mathbb{A}^1_K)^{\an} \mid 0< \nu(u) < \epsilon \}$ \\
\hline
\end{tabular}
\vspace{3mm}
    \caption{Non-archimedean geometry notions}
    \label{tabnonarchimedean}
\end{table}
Recall that $R\{X\} = \big\{ \sum_{i \ge 0} a_i X^i \in R[[X]] \mid \lim_{i \to \infty} \nu(a_i)=\infty  \big\}$. Let $z \in R$ and $s \in \mathbb{Q}_{\ge 0}$. One can associate with the closed disc $\mathcal{D}[s,z]=\spec R\{p^{-s}(X-z)\}$ the ``Gauss valuation'' $\nu_{s,z}$ defined by
\begin{equation*}
    \nu_{s,z}(f)=\inf_{a \in \mathcal{D}[s,z]} (\nu(f(a)),
\end{equation*}
for each $f \in \mathbb{K}^{\times}$. We denote by $\kappa_s$ the function field of $\mathbb{K}$ with respect to the valuation $\nu_{s,z}$. That is the function field of the canonical reduction $\overline{\mathcal{D}}[s,z]$ of $\mathcal{D}[s,z]$.

\subsection{Semi-stable models and a partition of a disc}
\label{secsemistablemodel}
Consider the open unit disc $D \subset C^{\an} \cong (\mathbb{P}^1_K)^{\an}$, which we may associate with $\spec R[[X]]$ for some $X \in \mathbb{K}$. Suppose we are given $x_{1,K}, \ldots, x_{r,K}$ in $D(K)$, with $r \ge 2$. We can think of $x_{1,K}, \ldots, x_{r,K}$ as elements of the maximal ideal of $R$. Let $C^{\st}$ be a blow-up of $C_R$ such that
\begin{itemize}
    \item the exceptional divisor $\overline{C}$ of the blow-up is a semi-stable curve over $k$,
    \item the fixed points $x_{b,K}$ specialize to pairwise distinct smooth points $x_b$ on $\overline{C}$, and
    \item if $x_0$ (which we usually denote by $\overline{\infty}$) denotes the unique point on $C$ which lies in the closure of $C^{\st} \otimes k \setminus \overline{C}$, then $(\overline{C},(x_b),x_0)$ is stably marked. 
\end{itemize}
A curve is stably marked if each of its irreducible components contains at least three points that are either marked or singular. We call $C^{\st}$ the \emph{stable model of $C$ corresponding to the marked disc $(D; x_1, \ldots, x_r)$}. See, e.g., \cite[\S 2.5.3]{Akeyrthesis} for more details. Note that the set of points of $(\mathbb{P}^1_K)^{\an}$ that specialize away from $x_0$ form the closed unit disc $\mathcal{D}$. 

\subsubsection{A coordinate system for a marked disc}
\label{seccordinatedisc} Using the notation from \S \ref{secsemistablemodel}, we define a one-dimensional ``coordinate system'' for each point of multiplicative singularity $e$ on the special fiber $\overline{C}$. Assume $e$ corresponds to the annulus $\{Y \in K \mid s(e) < \nu(Y-x_j) < t(e)\}$. We thus associate $e$ with a rational line segment $[s(e), t(e)] \cap \mathbb{Q}$. We then naturally assign to each rational number $r \in [s(e), t(e)]$ the circle $\{Y \mid \nu(Y-x_j) = r\}$. We call $r$ a \emph{rational place} on $e$. Points on $[s(e), t(e)]$ can be equipped with a natural ordering ``$\le$.'' Suppose $r$ is a rational place on $e$, and $r'$ is a rational place on another point of multiplicative singularity $e'$ of $\overline{C}$, whose corresponding annulus is more interior than that of $e$. We then say $r < r'$.

\subsection{Refined Swan conductor of \texorpdfstring{$\mathbb{Z}/p^n$}{}-extensions}
Let $\chi \in \cohom^1(\mathbb{K}, \mathbb{Z}/p^{n})$ be a character, and $\mathcal{D} \subset C^{\an}$ be a closed subdisc equipped with a topology corresponding to the canonical valuation $\nu_0$. Let $\kappa \cong k(x)$ denote the associated residue field. The degeneration of $\chi\lvert_{\mathcal{D}}$ can be measured by its associated \emph{refined Swan conductors}. Those include the following data:
\begin{enumerate}[label={\arabic*.}]
    \item The \emph{depth Swan conductor} $\delta({\chi\lvert_{\mathcal{D}}}) \in \mathbb{Q}_{\ge 0}$, which measures the separability of $\chi\lvert_{\mathcal{D}}$'s reduction. Particularly, it is $0$ if and only if $\chi$ is unramified with respect to $\nu$,
    \item For $\delta(\chi\lvert_{\mathcal{D}})>0$, the \emph{differential Swan conductor} $\dsw(\chi\lvert_{\mathcal{D}}) \in \Omega^1_{k(x)}$, and
    \item For $\delta(\chi\lvert_{\mathcal{D}})=0$, a \emph{reduction type} $\underline{f}=(f^1, \ldots, f^n) \in W_n(k(x))$, which can be replaced by another vector in the same Artin-Schreier-Witt class.
\end{enumerate}
We call the pair $(\delta(\chi\lvert_{\mathcal{D}}), \dsw(\chi\lvert_{\mathcal{D}}))$ when $\delta(\chi\lvert_{\mathcal{D}})>0$ (resp., $(0, \underline{f})$ when $\delta(\chi\lvert_{\mathcal{D}})=0$) the \emph{degeneration type} (resp., a \emph{reduction type}) of $\chi$ on $\mathcal{D}$.

\subsubsection{}
\label{secboundaryswanrateofchange}
Fix a closed disc $\mathcal{D}$, which may be associated with the ring $R\{X\}$. Let $z$ be a $K$-point of $\mathcal{D}$ and $r \in \mathbb{Q}_{\ge 0}$. We denote by $\delta_{\chi}(r,z)$ (resp. $\delta_{\chi}(r)$) and $\omega_{\chi}(r,z)$ (resp. $\omega_{\chi}(r)$) the depth and the differential conductors of the restriction of $\chi$ to $\mathcal{D}[r,z]$ (resp. $\mathcal{D}[r]$). For $\overline{w}$ is a closed pont of $\overline{\mathcal{D}}$, we denote by $U(\mathcal{D}, \overline{w}):=]\overline{w}[_{\mathcal{D}}$ the residue class of $\overline{w}$ on the affinoid $\mathcal{D}$.

When the point $z$ is fixed, we may regard $\delta_{\chi}(r,z)$ and $\omega_{\chi}(r,z)$ as functions of $r$. In fact, the depth extends to a continuous, piece-wise linear function:
\begin{equation*}
    \delta_{\chi}(\cdot, z): \mathbb{R}_{\ge 0} \rightarrow \mathbb{R}_{\ge 0},
\end{equation*}
whose kinks occur only at rational values of $r$, as detailed in \cite[Proposition 5.10]{MR3194815}.

Suppose that $\overline{x}$ is a point on the reduction of $\mathcal{D}[r,z]$ or a point at infinity denoted by $\overline{x} = \overline{\infty}$. Let $\ord_{\overline{x}}: \kappa^{\times} \to \mathbb{Z}$ be the normalized discrete valuation corresponding to $\overline{x}$. The residue class $U(\mathcal{D}[r,z], \overline{x})$ may be identified with the open disc $D[r,x]$. We then define the direction in which $r$ is increasing, which is the direction in which the disc is becoming smaller, as the \emph{direction with respect to $\overline{x}$}.

The following results show how understanding a character's differential conductor can give a lot of information about its depth.

\begin{proposition}[{\cite[Corollary 5.11]{MR3194815}}]
\label{propdiffswanrateofchange}
If $r \in \mathbb{Q}_{\ge 0}$ and $\delta_{\chi}(r,z)>0$, then the left and right derivatives of $\delta_{\chi}$ at $r$ are given by $\ord_{\overline{\infty}}(\omega_{\chi}(r,z))+1$ and $-\ord_{\overline{0}}(\omega_{\chi}(r,z))-1$, respectively. In particular, $-\ord_{\overline{x}}(\omega_{\chi}(r,z))-1$ is the instantaneous rate of change of $\delta_{\chi}(r,z)$ in the direction with respect to $\overline{x}$.

If $r \in \mathbb{Q}_{\ge 0}$ and $\delta_{\chi}(r,z)=0$, then the right derivative of $\delta_{\chi}$ at $r$ is $\sw_{\overline{\chi}\lvert_{\mathcal{D}[r,z]}}(\overline{0})$, where $\overline{\chi}\lvert_{\mathcal{D}[r,z]}$ is the reduction of $\chi\lvert_{\mathcal{D}[r,z]}$ and $\sw_{\overline{\chi}\lvert_{\mathcal{D}[r,z]}}(\overline{x})$ is the usual Swan conductor of $\overline{\chi}\lvert_{\mathcal{D}[r,z]}$ with respect to the valuation $\ord_{\overline{x}}$ \cite[IV,\S 2]{MR554237}.
\end{proposition}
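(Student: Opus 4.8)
The plan is to reduce the statement to a single local computation: expand a suitably reduced representative of $\chi$ in a coordinate adapted to $\mathcal D[r,z]$, re-expand it in the coordinate adapted to a nearby disc, and read off both $\omega_{\chi}(r,z)$ and the one-sided slopes of the depth from the monomials that survive reduction modulo $\wp$ (or modulo the Witt-vector Frobenius when $n>1$). First I would fix a rational $r$, a point $z\in\mathcal D(K)$, enlarge $K$ if necessary so that $r\in\nu(K^{\times})$, and set $T=a^{-1}(X-z)$ with $\nu(a)=r$; then $\nu_{r,z}$ is the Gauss valuation in $T$, the canonical reduction of $\mathcal D[r,z]$ is $\mathbb P^{1}_{\bar T}$, the point $\overline 0$ (where $z$ specializes) is $\{\bar T=0\}$, and $\overline\infty$ is $\{\bar T=\infty\}$. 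By Kato's theory of refined Swan conductors over the completion $\widehat{\mathbb K}_{\nu_{r,z}}$ (the references in \S\ref{secdegen}), $\chi\lvert_{\mathcal D[r,z]}$ admits a \emph{reduced} representative: for $n=1$ a function $f\in\mathbb K$ with $\nu_{r,z}(f)=-\delta$, $\delta:=\delta_{\chi}(r,z)$, whose leading part $\bar\ell:=\overline{cf}\in k[\bar T,\bar T^{-1}]$ (any $c$ with $\nu(c)=\delta$) is not a $p$-th power; for general $n$ a length-$n$ Witt vector $(f^{1},\dots,f^{n})$ in Kato's reduced form, with $\delta$ a Frobenius-weighted maximum $\max_{l}p^{n-l}(-\nu_{r,z}(f^{l}))$ over the coordinates (in the spirit of Theorem~\ref{theoremcaljumpirred}). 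In either case $\omega_{\chi}(r,z)=\dsw(\chi\lvert_{\mathcal D[r,z]})$ is, up to the explicit Witt-vector corrections in Kato's formula, the reduction $d\bar\ell$ of $d(cf^{l_{0}})$ for the dominant coordinate $l_{0}$; hence $\ord_{\overline 0}(\omega_{\chi}(r,z))=m_{0}-1$, where $m_{0}$ is the least exponent of $\bar T$ prime to $p$ occurring in $\bar\ell$, and symmetrically $\ord_{\overline\infty}(\omega_{\chi}(r,z))=-m_{0}'-1$ with $m_{0}'$ the greatest such exponent.

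The heart of the matter is the deformation. For $s$ slightly larger than $r$ the smaller disc $\mathcal D[s,z]$ has adapted coordinate $T_{s}=\gamma^{-1}T$ with $\nu(\gamma)=s-r>0$, so a monomial $c_{j}T^{j}$ of $f^{l_{0}}$ becomes $c_{j}\gamma^{j}T_{s}^{j}$ of $\nu_{s,z}$-valuation $\nu(c_{j})+j(s-r)$. For $s-r$ small the minimum of these valuations is attained among the monomials of $\bar\ell$, and within those at the least exponent; if that exponent is divisible by $p$ one removes the resulting $p$-th power by subtracting $\wp$ of a lift (for $n>1$, the analogous Witt-vector operation), and a short valuation estimate shows the newly introduced terms all have strictly larger $\nu_{s,z}$-valuation and cannot re-enter the leading part. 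The process therefore halts at the exponent $m_{0}$, so $\delta_{\chi}(s,z)=\delta-m_{0}(s-r)+o(s-r)$ and the right derivative is $-m_{0}=-\ord_{\overline 0}(\omega_{\chi}(r,z))-1$. The same computation with $s$ slightly \emph{smaller} than $r$ (a larger disc, $\nu(\gamma)<0$) singles out instead the \emph{greatest} surviving exponent $m_{0}'$ and gives left derivative $-m_{0}'=\ord_{\overline\infty}(\omega_{\chi}(r,z))+1$; carried out in the coordinate $T-\beta$ for a lift $\beta$ of an arbitrary point $\bar x\in\overline{\mathcal D}[r,z]$ it yields the ``direction with respect to $\overline x$'' formula $-\ord_{\overline x}(\omega_{\chi}(r,z))-1$. (Conceptually the $\pm1$ records the passage between $\Omega^{1}_{k(x)}$ and $\Omega^{1}_{k(x)}(\log\{\overline 0,\overline\infty\})$.) Continuity and piecewise linearity of $\delta_{\chi}(\cdot,z)$ with kinks only at rational $r$ --- already \cite[Proposition 5.10]{MR3194815}, which I would simply cite --- come out of the same analysis, since $l_{0}$ and the dominant monomial are locally constant in $s$ and contribute affinely to the valuation, the kinks occurring exactly where one of them changes.

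For the case $\delta_{\chi}(r,z)=0$ the reduced representative has $\nu_{r,z}(f)\ge 0$ (the sub-case $>0$ being trivial, since then $\overline\chi$ is unramified and the right derivative is $0$), and its reduction $\bar f\in k(\bar T)$ represents $\overline\chi\lvert_{\mathcal D[r,z]}$, whose Swan conductor at $\overline 0$ equals $-m_{0}$, where $m_{0}<0$ is the pole order of $\bar f$ at $\bar T=0$ after reduction (so $p\nmid m_{0}$). The identical deformation computation applies --- the pole terms of $\bar f$ at $\bar T=0$ acquire negative $\nu_{s,z}$-valuation as $s$ grows past $r$ --- giving $\delta_{\chi}(s,z)=-m_{0}(s-r)+o(s-r)$, hence right derivative $-m_{0}=\sw_{\overline\chi\lvert_{\mathcal D[r,z]}}(\overline 0)$.

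The step I expect to be the real obstacle is the general-$n$ bookkeeping in the second paragraph. For $\mathbb Z/p$ the reduction is the single substitution $f\mapsto f-\wp(g)$ and everything is transparent; for $\mathbb Z/p^{n}$ one must instead control Kato's reduced form of a length-$n$ Witt vector under the change of Gauss valuation $\nu_{r,z}\rightsquigarrow\nu_{s,z}$: pinning down the dominant coordinate $f^{l_{0}}$ and its weight $p^{n-l_{0}}$, checking that the ``carry'' terms produced by Witt addition while reducing the leading coordinate have strictly larger valuation and so stay harmless, and confirming that $\dsw$ is insensitive to these operations. This is precisely where the full refined-Swan-conductor formalism must replace a bare-hands Artin-Schreier calculation, and it is the only part of the argument I would not expect to be essentially mechanical once the $n=1$ prototype is set up.
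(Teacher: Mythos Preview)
The paper gives no proof of this proposition at all: it is stated with the attribution \emph{[Corollary 5.11]{MR3194815}} and is used as a black box. So there is nothing to compare your argument against in this manuscript --- you are reproving an external result that the author simply cites.

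That said, your outline is essentially the argument that appears in the cited source (Obus--Wewers). The strategy of choosing an adapted coordinate $T$, taking a reduced (Kato-normalized) representative, and reading off the one-sided derivatives by tracking which monomial dominates under the substitution $T\mapsto \gamma^{-1}T$ is exactly how Corollary~5.11 is obtained there from their Proposition~5.10. Your identification of the exponents $m_0$, $m_0'$ with $\ord_{\overline 0}(\omega)+1$ and $-\ord_{\overline\infty}(\omega)-1$ is correct, as is the $\delta=0$ case via the pole order of the reduced Artin--Schreier--Witt representative. You are also right that the only genuinely delicate point is the Witt-vector bookkeeping for $n>1$: controlling the carries under Witt addition when one kills $p$-th-power leading terms, and checking that these carries never re-enter the leading part. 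Obus--Wewers handle this by working with Kato's filtration and the functoriality of $\mathrm{rsw}$ rather than by a bare-hands monomial chase, which is cleaner; but your sketch would go through with enough care. In the context of \emph{this} paper, though, the intended proof is simply ``see \cite[Corollary 5.11]{MR3194815}.''
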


\begin{remark}
    Suppose $\overline{\chi} \in \cohom^1(k(x), \mathbb{Z}/p^n)$, which can be thought of as a $\mathbb{Z}/p^n$-cover of $\mathbb{P}^1_k$. Then, for each closed point $\overline{x} \in \mathbb{P}^1_k$, the integer $\sw_{\overline{\chi}}(\overline{x})$ is the maximal ramification break of $\overline{\chi}$ at $\overline{x}$, which can be calculated using Theorem \ref{theoremcaljumpirred}.
\end{remark}

In fact, the refined Swan conductors provide necessary and sufficient conditions for a cover to have good reduction, as suggested by the following result.

\begin{proposition}[{(cf. \cite[Corollary 5.13]{MR3194815})}]
\label{propgood}
\begin{enumerate}[label=(\arabic*)]
    \item \label{propgooditem1} Let $\chi \in \textrm{H}^1_{p^n}(\mathbb{K})$ be an admissible character of order $p^n$. Then
    \begin{equation}
        \label{eqngoodconductorswan}
        \iota_n:= \lvert \mathbb{B}(\chi) \cap (U(\mathcal{D},\overline{0}) \rvert \ge \sw_{\overline{\chi}\lvert_{\mathcal{D}[0,0]}}(\overline{0})+1.
    \end{equation}
    \noindent Also, $\chi$ has good reduction if and only if $\delta_{\chi}(0)=0$ and the equality above holds. We call $\iota_n$ the \textit{conductor} of the character $\chi$.
    \item \label{propgooditem2} Suppose $\chi$ has good reduction with upper breaks $(m_1, \ldots, m_n)$. Let $\chi_i:=\chi^{p^{n-i}}$. If $1 \le i \le n$, then the conductor of $\chi_i$ is
    \begin{equation}
        \label{eqnupperbreakconductor}
        \iota_i:=\lvert \mathbb{B}(\chi_i) \cap (U(\mathcal{D},\overline{0}) \rvert=m_i+1
    \end{equation}
    In particular, $\iota_i \ge p\iota_{i-1}-p+1$, and if $\iota_i \equiv 1 \pmod{p}$ then $\iota_i=p\iota_{i-1}-p+1$.
\end{enumerate}
\end{proposition}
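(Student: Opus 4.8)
The plan is to deduce everything from the theory of the refined Swan conductor developed in \cite{MR3194815}, specializing the general statements there to the one-point (admissible) situation. For part \ref{propgooditem1}, I would first invoke Proposition \ref{propdiffswanrateofchange} in the case $\delta_\chi(0)=0$: the right derivative of $r\mapsto\delta_\chi(r)$ at $r=0$ equals $\sw_{\overline\chi\lvert_{\mathcal{D}[0,0]}}(\overline 0)$, the Swan conductor of the reduction with respect to the valuation at $\overline 0$. On the other hand, as one moves into the disc toward $\overline 0$, each branch point $x_{b,K}$ of $\chi$ lying in $U(\mathcal D,\overline 0)$ forces the character to ramify, and a counting/additivity argument for the depth (the depth must eventually vanish as the subdisc shrinks around a point where $\chi$ is, say, at worst tamely ramified, plus the slope contribution of Proposition \ref{propdiffswanrateofchange} in the $\delta>0$ regime, which is $-\ord_{\overline 0}(\omega_\chi(r))-1\ge $ something controlled by the number of branch points) yields the inequality $\iota_n\ge \sw_{\overline\chi\lvert_{\mathcal D[0,0]}}(\overline 0)+1$. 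Concretely: integrating the rate-of-change formula along the path from the Gauss point to the interior, the total drop in depth is bounded by a quantity involving $\iota_n$, and the reduction's Swan conductor appears as the initial slope; this is exactly \cite[Corollary 5.13]{MR3194815} transcribed to our normalization. The ``good reduction'' criterion is then the statement that $\chi\lvert_{\mathcal D[0,0]}$ has separable (even étale-away-from-$\overline 0$) reduction precisely when the depth at the Gauss point is $0$ \emph{and} no further branch points are hidden, i.e. the conductor inequality is an equality.

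For part \ref{propgooditem2}, assume $\chi$ has good reduction with reduction $\phi_n$ totally ramified above $\overline x=0$ and étale elsewhere; then $\chi_i=\chi^{p^{n-i}}$ has reduction $\phi_i=\phi_n^{p^{n-i}}$, the unique $\mathbb Z/p^i$-subcover, which is again totally ramified above $0$ and étale elsewhere, hence also has good reduction. Applying \ref{propgooditem1} to $\chi_i$ and using that the reduction is totally ramified at $\overline 0$, the Swan conductor $\sw_{\overline{\chi_i}}(\overline 0)$ equals the maximal (upper) ramification break $m_i$ of $\phi_i$ at $0$, by the remark following Proposition \ref{propdiffswanrateofchange} (Theorem \ref{theoremcaljumpirred}). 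Since $\chi_i$ has good reduction, the inequality in \ref{propgooditem1} is an equality, giving $\iota_i=m_i+1$. The relations $\iota_i\ge p\iota_{i-1}-p+1$, with equality forced when $\iota_i\equiv 1\pmod p$, then follow from the corresponding classical inequalities between consecutive upper ramification breaks of a $\mathbb Z/p^n$-extension of a complete discretely valued field of residue characteristic $p$ (Hasse--Arf type constraints, together with the fact that a break $\equiv 1\pmod p$ cannot itself be a ``new'' break contribution and so must be $p$ times the previous one minus $p-1$): translating $m_i=\iota_i-1$, the condition $m_i\ge p\,m_{i-1}$ together with the divisibility obstruction yields precisely the stated dichotomy.

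The main obstacle I anticipate is \emph{not} any single computation but rather making the reduction from the general refined-Swan-conductor formalism of \cite{MR3194815} (stated for arbitrary finite sets of branch points and arbitrary discs) to the admissible one-point normalization used here: one has to check that ``$\iota_n$ counts exactly the branch points specializing into $U(\mathcal D,\overline 0)$'' is the right avatar of the conductor, that the slope formula of Proposition \ref{propdiffswanrateofchange} integrates cleanly along the relevant path with no spurious kinks contributing, and that the passage $\chi\rightsquigarrow\chi_i$ is compatible with all these identifications (in particular that $\mathbb B(\chi_i)\subseteq\mathbb B(\chi)$ and that good reduction is inherited). Once that bookkeeping is pinned down, parts \ref{propgooditem1} and \ref{propgooditem2} are essentially a dictionary translation of \cite[Corollaries 5.11 and 5.13]{MR3194815} plus the classical ramification-break inequalities, so I would keep the proof short and cite those results for the analytic input, spending the prose instead on the dictionary.
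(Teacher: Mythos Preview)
Your proposal is correct and aligns with the paper's approach: the paper gives no proof at all for this proposition, simply citing \cite[Corollary 5.13]{MR3194815}, and your sketch is precisely a fleshed-out version of that citation together with the standard translation to the admissible one-point setting and the classical upper-ramification-break inequalities. The only minor point is that your worry about ``bookkeeping'' is in fact the entire content here---the paper treats the result as a direct import from Obus--Wewers and does not spell out any of the dictionary you outline, so your proposal is if anything more detailed than what the paper provides.
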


In addition, the following result, combined with Proposition \ref{propdiffswanrateofchange}, provides a criterion for the rate of change of depth over the disc $\mathcal{D}$.

\begin{proposition}[{(cf. \cite[Corollary 5.14]{MR3194815})}]
\label{propcriteriongoodnonroot}
Suppose $\overline{w} \neq \overline{\infty}$ in $\kappa$, considered as a closed point of $\overline{D}[r,z]$, we have:
\begin{equation}
\label{eqnvanishingcyclenoninfty}
    \ord_{\overline{w}} (\omega_{\chi}(r,z)) \ge - |\mathbb{B}(\chi) \cap U(\mathcal{D}[r,z], \overline{w})|.
\end{equation}
Furthermore, equality holds if $\chi$ has good reduction.
\end{proposition}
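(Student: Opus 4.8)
I would prove Proposition \ref{propcriteriongoodnonroot} as follows.

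\emph{Step 1: recentering and reduction to a slope bound.} The claim is local at $\overline w$, so the first move is to recenter. Choose a $K$-point $w$ of $\mathcal D[r,z]$ specializing to $\overline w$; since $\nu(w-z)\ge r$ and $\overline w\ne\overline\infty$, the ultrametric inequality gives $\mathcal D[r,w]=\mathcal D[r,z]$, the residue class $U(\mathcal D[r,z],\overline w)$ becomes the open disc $D[r,w]$, the point $\overline w$ becomes $\overline 0\in\overline{\mathcal D}[r,w]$, and $\ord_{\overline w}(\omega_\chi(r,z))=\ord_{\overline 0}(\omega_\chi(r,w))$. After renaming we may assume $z=w=0$, $\overline w=\overline 0$, and we must show $\ord_{\overline 0}(\omega_\chi(r))\ge -N$ with $N:=\lvert\mathbb B(\chi)\cap D[r]\rvert$, with equality when $\chi$ has good reduction; since $\omega_\chi(r)$ is defined only once $\delta_\chi(r)>0$, we assume this. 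By Proposition \ref{propdiffswanrateofchange}, $-\ord_{\overline 0}(\omega_\chi(r))-1$ is exactly the right derivative $\delta'_{\chi}(r^{+})$ of the continuous, piecewise linear depth function $s\mapsto\delta_\chi(s)$ at $r$. So the inequality is equivalent to $\delta'_{\chi}(r^{+})\le N-1$, and the equality assertion to $\delta'_{\chi}(r^{+})=N-1$ under good reduction. More generally I would prove, for every $s$ with $\delta_\chi(s)>0$, that $\delta'_{\chi}(s^{+})\le N(s)-1$, where $N(s):=\lvert\mathbb B(\chi)\cap D[s]\rvert$ is non-increasing in $s$, integer valued, drops at each $\nu(P_j)$, and vanishes for $s$ large.

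\emph{Step 2: the case $\delta_\chi(s)=0$.} Here Proposition \ref{propdiffswanrateofchange} identifies $\delta'_{\chi}(s^{+})$ with $\sw_{\overline\chi|_{\mathcal D[s,0]}}(\overline 0)$, and Proposition \ref{propgood}\ref{propgooditem1} applied to $\chi|_{\mathcal D[s,0]}$ (whose residue class at $\overline 0$ is $D[s]$) bounds this by $\lvert\mathbb B(\chi)\cap U(\mathcal D[s],\overline 0)\rvert-1=N(s)-1$, with equality precisely when $\chi|_{\mathcal D[s,0]}$ has good reduction. This regime governs the one-sided slopes of $\delta_\chi$ wherever it meets zero.

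\emph{Step 3: the case $\delta_\chi(s)>0$.} This is the heart, and I would follow the template of \cite[Corollary~5.14]{MR3194815}, arguing by induction on $n$. For $n=1$, pass to the complete discretely valued field $F_s$ of the boundary $\partial\mathcal D[s]$, whose residue field is $\kappa_s\cong k(x)$ with uniformizer $t$; then $\omega_\chi(s)$ is the $dx$-component of Kato's refined Swan conductor $\rsw(\chi|_{F_s})$, which by Kato's formula is the differential of the leading $t$-adic coefficient $\overline f_s\in k(x)$ of a \emph{reduced} Artin--Schreier datum for $\chi|_{F_s}$. Feeding in the partial fraction decomposition over $\mathbb K$ of a datum for $\chi$ and then passing to the reduced form over $F_s$, one establishes the core local estimate: $\overline f_s$ is regular away from the reductions of branch points, and at each such point $\overline v$ has pole order at most $\lvert\mathbb B(\chi)\cap U(\mathcal D[s],\overline v)\rvert-1$ — the point being that the $p$-adic normalization hidden in the passage to a reduced datum over $F_s$ exactly cancels the high-order poles that a naive reduction of the datum would produce. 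Hence $\ord_{\overline v}(\omega_\chi(s))=\ord_{\overline v}(d\overline f_s)\ge-\lvert\mathbb B(\chi)\cap U(\mathcal D[s],\overline v)\rvert$, and $\overline v=\overline 0$, $s=r$ settles $n=1$. For $n-1\Rightarrow n$, relate $\omega_\chi(s)$ to $\omega_{\chi^p}(s)$, where $\chi^p$ is the order-$p^{n-1}$ subcharacter with $\mathbb B(\chi^p)\subseteq\mathbb B(\chi)$, using the Artin--Schreier--Witt structure of \S\ref{secASWprojective} and the compatibility of refined Swan conductors with Witt layers, so that the pole of $\omega_\chi(s)$ at $\overline 0$ is bounded by that of $\omega_{\chi^p}(s)$ plus one additional Artin--Schreier layer, to which the $n=1$ analysis applies. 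Combining Steps 2 and 3 gives $\delta'_{\chi}(s^{+})\le N(s)-1$ for all $s\ge r$, hence $\ord_{\overline 0}(\omega_\chi(r))\ge -N$; for the equality statement, when $\chi$ has good reduction its semistable model over each $\mathcal D[s]$ is computed explicitly from the reduced representative via Theorem \ref{theoremcaljumpirred}, and one reruns the analysis with every inequality above replaced by an equality (good reduction forces both $\overline f_s$ in the $n=1$ step and the Swan conductor in the $\delta=0$ step to take their maximal allowed value), which yields $\ord_{\overline 0}(\omega_\chi(r))=-N$; undoing the recentering returns the statement for the original $(\overline w,r,z)$.

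\emph{Main obstacle.} The delicate point is the regime $\delta_\chi(s)>0$: there is no honest reduction cover on which to invoke Proposition \ref{propgood}, so one must work directly with $\rsw(\chi|_{F_s})$ and prove the core local estimate — i.e., that the $p$-adic normalization implicit in $\dsw$ cancels precisely the high-order poles a naive reduction of the Artin--Schreier(--Witt) datum would exhibit, leaving a pole of order at most the number of branch points in the residue disc. Making the Witt-layer passage $\omega_{\chi^p}(s)\rightsquigarrow\omega_\chi(s)$ precise, together with establishing the local regularity of $\omega_\chi(s)$ away from $\overline\infty$ and the branch-point reductions, is where the technical estimates deferred to \S\ref{sectechnical} would be invoked; once the per-point bound is in hand, a degree count on $\mathbb P^1_k$ (using $\ord_{\overline\infty}(\omega_\chi(s))=\delta'_{\chi}(s^{-})-1$ from Proposition \ref{propdiffswanrateofchange} and $\deg\operatorname{div}(\omega_\chi(s))=-2$) in fact pins down all of these orders simultaneously.
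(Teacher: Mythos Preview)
The paper does not supply a proof of this proposition: it is stated as a direct citation of \cite[Corollary~5.14]{MR3194815} (Obus--Wewers), with no argument given. So there is nothing in the present paper to compare your proposal against; you are in effect trying to reconstruct the Obus--Wewers proof.

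On the substance of your sketch: Step~1 (recentering and translating into a right-derivative bound via Proposition~\ref{propdiffswanrateofchange}) and Step~2 (the $\delta=0$ case via the Swan-conductor inequality \eqref{eqngoodconductorswan}) are fine. Step~3 has two issues worth flagging. First, your appeal to ``technical estimates deferred to \S\ref{sectechnical}'' is misplaced: that section treats the Cartier problem for building extension trees and contains nothing about bounding pole orders of refined Swan conductors. Second, your $n=1$ argument is phrased in Artin--Schreier language (``reduced Artin--Schreier datum'', $\omega_\chi(s)=d\overline f_s$), but we are in mixed characteristic, where order-$p$ characters are Kummer, not Artin--Schreier; the differential Swan conductor is computed from $dG/G$-type data rather than from $d\overline f$ of an Artin--Schreier representative. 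The actual Obus--Wewers argument does not proceed by bounding pole orders of a reduced representative branch-point-by-branch-point as you describe; it goes through a vanishing-cycles / Riemann--Hurwitz count on the special fiber that relates $\ord_{\overline w}(\omega_\chi)$ directly to the number of geometric branch points specializing to $\overline w$. Your final degree-count remark (using $\sum_{\overline v}\ord_{\overline v}\omega=-2$ on $\mathbb P^1_k$) is closer in spirit to how the equality case is extracted, but the inequality itself comes from the vanishing-cycles input, not from normalizing an explicit datum.
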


This establishes a criterion for the differential conductors, and hence also for the rate of change of the depths, of restrictions of a character with good reduction.

\begin{corollary}
\label{cordiffgood}
    Suppose $\chi \in H^1(\mathbb{K}, \mathbb{Z}/p^n)$ has good reduction. Then, for any $\mathcal{D}' \subsetneq \mathcal{D}$, we have $\delta(\chi|_{\mathcal{D}'}) > 0$ and
    \begin{equation}\label{eqnomegagood}
        \omega(\chi|_{\mathcal{D}'}) = \frac{c \, dx}{\prod_{i=1}^l (x - a_i)^{m_i}},
    \end{equation}
    where $c \in k^\times$, each $a_i$ is part of the canonical reduction of $\mathcal{D}'$, and $m_i := |\mathbb{B}(\chi) \cap U(\mathcal{D}', a_i)|$. In particular, if $\chi$ has no branch points with valuations in the interval $(r_1, r_2)$ where $0 \le r_1 < r_2 < 1$, then for any $r \in (r_1, r_2) \cap \mathbb{Q}$, the depth is
    \begin{equation*}
        \delta_{\chi}(r) = \delta_{\chi}(r_2) - (|\mathbb{B}(\chi) \cap U(\mathcal{D}[r_2], 0)| - 1)(r_2 - r).
    \end{equation*}
\end{corollary}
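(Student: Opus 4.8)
The plan is to deduce everything from Propositions \ref{propdiffswanrateofchange}, \ref{propgood}, and \ref{propcriteriongoodnonroot}, which together control the depth and differential conductors of restrictions of a good-reduction character. First I would show that $\delta(\chi|_{\mathcal{D}'}) > 0$ for any proper subdisc $\mathcal{D}' \subsetneq \mathcal{D}$. Since $\chi$ has good reduction, Proposition \ref{propgood}\ref{propgooditem1} gives $\delta_{\chi}(0) = 0$ together with the equality $\iota_n = \sw_{\overline{\chi}|_{\mathcal{D}[0,0]}}(\overline{0}) + 1$; by Proposition \ref{propdiffswanrateofchange} (the $\delta = 0$ case) the right derivative of $\delta_{\chi}$ at $r = 0$ in the direction with respect to $\overline{0}$ equals $\sw_{\overline{\chi}|_{\mathcal{D}[0,0]}}(\overline{0}) = \iota_n - 1 \ge 0$. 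Because $\chi$ is admissible and totally ramified above $\overline{0}$, this Swan conductor is positive (equivalently $\iota_n \ge 2$), so the depth becomes strictly positive as soon as we shrink the disc, and it cannot return to $0$ since, once $\delta_\chi > 0$, Proposition \ref{propdiffswanrateofchange} shows the rate of change in the shrinking direction is $-\ord_{\overline{0}}(\omega_\chi(r)) - 1$, and by Proposition \ref{propcriteriongoodnonroot} applied at $\overline{w}$ lying over the shrinking direction (together with the equality case for good reduction) this derivative equals $|\mathbb{B}(\chi) \cap U(\mathcal{D}[r], \overline{w})| - 1 \ge 0$ whenever that residue class still meets the branch locus — i.e. whenever $\mathcal{D}'$ is not yet étale, which is automatic while we remain over the totally ramified point. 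Hence $\delta$ stays positive on every proper subdisc.

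Next I would pin down the shape of $\omega(\chi|_{\mathcal{D}'})$. Write $\kappa' \cong k(x)$ for the residue field of $\mathcal{D}'$ and let $a_1, \dots, a_l$ be the closed points of the canonical reduction $\overline{\mathcal{D}'}$ in whose residue classes branch points of $\chi$ specialize. A differential form on $\mathbb{P}^1_k$ is determined by its zeros and poles; since $\delta(\chi|_{\mathcal{D}'}) > 0$, the differential Swan conductor $\omega(\chi|_{\mathcal{D}'}) \in \Omega^1_{k(x)}$ is defined, and Proposition \ref{propcriteriongoodnonroot} (equality case, valid because $\chi$ has good reduction) computes its order at every closed point $\overline{w} \ne \overline{\infty}$: namely $\ord_{\overline{w}}(\omega(\chi|_{\mathcal{D}'})) = -|\mathbb{B}(\chi) \cap U(\mathcal{D}', \overline{w})|$, which is $0$ unless $\overline{w} \in \{a_1, \dots, a_l\}$ and is $-m_i$ at $\overline{w} = a_i$. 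This already forces $\omega(\chi|_{\mathcal{D}'}) = c \, dx / \prod_{i=1}^l (x - a_i)^{m_i}$ for some $c \in k^\times$: the right-hand side has exactly the prescribed divisor away from infinity, the ratio of the two forms is a rational function with no zeros or poles outside $\overline{\infty}$, hence (on $\mathbb{P}^1$) a constant. One should double-check the behaviour at $\overline{\infty}$ is consistent — the total degree of the divisor of any differential on $\mathbb{P}^1$ is $-2$, so $\ord_{\overline{\infty}}(\omega(\chi|_{\mathcal{D}'})) = \sum m_i - 2$, which by Proposition \ref{propdiffswanrateofchange} is exactly the left derivative of $\delta_\chi$ minus one; I would remark that this matches the conductor bookkeeping but is not needed for the stated formula.

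Finally I would derive the depth formula in the branch-point-free interval. Suppose $\chi$ has no branch point of valuation in $(r_1, r_2)$ with $0 \le r_1 < r_2 < 1$, and fix $r \in (r_1, r_2) \cap \mathbb{Q}$. On $\mathcal{D}[r_2]$ apply the computation just obtained: the only closed point of $\overline{\mathcal{D}[r_2]}$ whose residue class meets $\mathbb{B}(\chi)$ and lies in the ``interior'' direction is $\overline{0}$, with multiplicity $m := |\mathbb{B}(\chi) \cap U(\mathcal{D}[r_2], 0)|$, so by Proposition \ref{propdiffswanrateofchange} the instantaneous rate of change of $\delta_\chi$ in the direction of decreasing radius at $r_2$ is $-\ord_{\overline{0}}(\omega_\chi(r_2)) - 1 = m - 1$. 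Since no branch point has valuation in $(r_1, r_2)$, this slope is constant on the whole interval (the number of branch points in the relevant residue class does not change as $r$ moves within $(r_1, r_2)$), and the depth is piecewise linear with kinks only at rational radii where the branch locus changes (Proposition \ref{propdiffswanrateofchange} and \cite[Proposition 5.10]{MR3194815}). Integrating this constant slope from $r$ up to $r_2$ gives
\begin{equation*}
    \delta_{\chi}(r) = \delta_{\chi}(r_2) - (m - 1)(r_2 - r) = \delta_{\chi}(r_2) - \bigl(|\mathbb{B}(\chi) \cap U(\mathcal{D}[r_2], 0)| - 1\bigr)(r_2 - r),
\end{equation*}
as claimed. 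The main obstacle I anticipate is the first part — making rigorous the claim that $\delta$ becomes and stays positive on every proper subdisc — since this requires combining the $\delta = 0$ and $\delta > 0$ cases of Proposition \ref{propdiffswanrateofchange} at the boundary radius $r = 0$ and using admissibility plus total ramification above $\overline{0}$ to guarantee the initial slope is strictly positive; the differential-form identification and the slope integration are then comparatively routine divisor computations on $\mathbb{P}^1_k$.
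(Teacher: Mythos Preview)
Your proposal is correct and follows essentially the same approach as the paper: both derive the pole orders of $\omega(\chi|_{\mathcal{D}'})$ at the $a_i$ from the equality case of Proposition~\ref{propcriteriongoodnonroot} and then read off the depth formula from the resulting constant slope via Proposition~\ref{propdiffswanrateofchange}. The only cosmetic difference is that the paper pins down \eqref{eqnomegagood} by explicitly computing $\ord_{\overline{\infty}}(\omega)$ through the left derivative of $\delta_\chi$, whereas you argue (equivalently) that the finite divisor already determines $\omega$ up to a constant on $\mathbb{P}^1_k$; you also supply more detail on the positivity of $\delta$, which the paper's proof leaves implicit.
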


\begin{proof}
    The claim that $\ord_{a_i}(\omega(\chi|_{\mathcal{D}'})) = -m_i$ follows from \eqref{eqnvanishingcyclenoninfty}. Without loss of generality, assume $\mathcal{D}' = \mathcal{D}[r',z]$. As the depth function is piece-wise linear, there exists a rational $r < r'$ such that $\delta_{\chi}(\cdot, z)$ is linear on $(r, r')$ and $\mathbb{B}(\chi) \cap \mathcal{D}' = \mathbb{B}(\chi) \cap \mathcal{D}[r, z]$. It then follows from Proposition \ref{propcriteriongoodnonroot} and Proposition \ref{propdiffswanrateofchange} that the slope of $\delta_{\chi}(\cdot, z)$ on $(r, r')$ is $|\mathbb{B}(\chi) \cap \mathcal{D}'| - 1 = \sum_{i=1}^l m_i - 1$. Therefore, once more due to Proposition \ref{propdiffswanrateofchange}, $\ord_{\infty}(\omega(\chi|_{\mathcal{D}'})) = \sum_{i=1}^l m_i - 2$. Hence, the differential form takes the form of \eqref{eqnomegagood}. The second assertion follows easily.
\end{proof}

\subsubsection{Conditions on the refined swan conductors of cyclic covers}
Let $\chi_n \in \cohom^1( \mathbb{K},\mathbb{Z}/p^n)$ be a radical character of order $p^n$, with $n \ge 1$. For $i=1, \ldots, n$, we set
$$ \chi_i:=\chi_n^{p^{n-i}}, \hspace{5mm} \delta_i:=\delta_{\chi_i}, \hspace{5mm} \omega_i:=\omega_{\chi_i}.  $$
The tuple $(\delta_i, \omega_i)_{i=1, \ldots, n}$ is called the \emph{ramification datum} associated to $\chi_n$. For $1 \le j \le n$, the pair $(\delta_j, \omega_j)$ is called $\chi_n$'s \emph{$j$-th ramification datum}.

In order to answer induction-type questions like Question \ref{questionrefinedlifting}, one would be interested in learning what the $n$-level can be when the $(n-1)$-level is known. The next theorem will address exactly that for our situation.

\begin{theorem}[{(cf. Theorem 4.3 \cite{MR3167623})}]
\label{theoremCartierprediction}
Let $\chi \in \cohom^1(\mathbb{K}, \mathbb{Z}/p^{n})$ be a cyclic character of order $p^n$, whose associated residue extension is purely inseparable of degree $p^n$. Let $(\delta_i, \omega_i)_i$ denote the ramification data associated with $\chi$. Then the following holds.
\begin{enumerate}[label=(\arabic*)]
    \item $0 < \delta_1 \le \frac{p}{p-1}$. Moreover, we have
    \begin{enumerate}
        \item $\delta_1=\frac{p}{p-1} \iff \mathcal{C}(\omega_1)=\omega_1, $
        \item $\delta_1< \frac{p}{p-1} \iff \mathcal{C}(\omega_1)=0 .$
    \end{enumerate}
    \item Suppose $i>1$. If $\delta_{i-1}>\frac{1}{p-1}$ then we have
    \[\delta_i=\delta_{i-1}+1, \enspace \omega_i=-\omega_{i-1}.\]
    \item Suppose $i>1$. If $\delta_{i-1} \le \frac{1}{p-1}$ then
    \[p\delta_{i-1} \le \delta_i \le \frac{p}{p-1}.\]
    Moreover, we have
    \begin{enumerate}
        \item $p\delta_{i-1}=\delta_i<\frac{p}{p-1} \Rightarrow \mathcal{C}(\omega_i)=\omega_{i-1}$
        \item $p\delta_{i-1} < \delta_i < \frac{p}{p-1} \Rightarrow \mathcal{C}(\omega_i)=0$
        \item $p\delta_{i-1} < \delta_i =\frac{p}{p-1} \Rightarrow \mathcal{C}(\omega_i)=\omega_i$
        \item $p\delta_{i-1} = \delta_i =\frac{p}{p-1} \Rightarrow \mathcal{C}(\omega_i)=\omega_i+\omega_{i-1}.$
    \end{enumerate}
\end{enumerate}
\end{theorem}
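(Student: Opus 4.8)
The plan is to analyze the behavior of the depth and differential Swan conductors of $\chi_i=\chi^{p^{n-i}}$ by passing, for each radius $r<1$, to the boundary of the disc $\mathcal{D}[r]$ and invoking the theory of refined Swan conductors in the one-variable local setting, where the Cartier operator $\mathcal{C}$ governs the relation between consecutive levels of a length-$n$ Witt vector. Concretely, the statement is the ``disc'' incarnation of the local result recalled as Theorem 4.3 of \cite{MR3167623}: for a $\mathbb{Z}/p^n$-character over a complete discretely valued field whose residue extension is purely inseparable of degree $p^n$, the depth jumps are constrained by $p\delta_{i-1}\le\delta_i\le\frac{p}{p-1}$ in the ``small depth'' regime $\delta_{i-1}\le\frac1{p-1}$, and by $\delta_i=\delta_{i-1}+1$ in the ``large depth'' regime, with the differential conductor determined up to the ambiguity controlled by whether $\mathcal{C}(\omega_i)$ is $0$, $\omega_i$, $\omega_{i-1}$, or $\omega_i+\omega_{i-1}$. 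First I would set up the correspondence between $\chi$ restricted to the relevant subdisc and a length-$n$ Witt vector over the residue field $\kappa\cong k(x)$ of that disc, using the Artin--Schreier--Witt description recalled in \S\ref{secASWprojective}; the purely inseparable hypothesis on the residue extension is exactly what forces $\delta_1>0$ and, inductively, the characters $\chi_i$ to remain ramified.

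The key steps, in order, are as follows. Step 1: Establish the base case $0<\delta_1\le\frac{p}{p-1}$. The lower bound is immediate from ramifiedness; the upper bound $\frac{p}{p-1}$ is the standard bound on the depth (equivalently, on the conductor) of a degree-$p$ character in mixed characteristic with $\nu(p)=1$ — this is where the normalization $\nu(p)=1$ enters, since $\frac{p}{p-1}=1+\frac1{p-1}$ and $\frac1{p-1}$ is the ``wild part'' available below the branch locus. The equivalence $\delta_1=\frac{p}{p-1}\iff\mathcal{C}(\omega_1)=\omega_1$ versus $\delta_1<\frac{p}{p-1}\iff\mathcal{C}(\omega_1)=0$ comes from the explicit form of the differential conductor of an Artin--Schreier character $f\,d? / ?$: $\omega_1=df/f$ (or $\omega_1 = -df$ in the unitary normalization) has $\mathcal{C}(\omega_1)=\omega_1$ precisely when $f$ is (up to the relevant equivalence) of the shape making the depth maximal, and $\mathcal{C}(\omega_1)=0$ otherwise, by the characterization of exact/logarithmic differentials under Cartier. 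Step 2: The inductive step uses the Witt-vector addition formula $\wp(\underline g)=(\wp(g^1),\ldots)$ truncated, i.e., the relation expressing how $\chi_i$ is built from $\chi_{i-1}$ by adjoining one more Witt coordinate. Reducing modulo the valuation on $\mathcal{D}[r]$, the $i$-th ghost component contributes a term whose differential is $-\omega_{i-1}$ plus the ``new'' contribution $\omega_i^{\mathrm{new}}$, and the depth of $\chi_i$ is $\max(p\delta_{i-1},\ \text{depth of the new part})$ up to the interaction terms. When $\delta_{i-1}>\frac1{p-1}$, the $p$-th power term $p\delta_{i-1}>\frac p{p-1}$ would exceed the a priori ceiling, so the new part cannot compete and in fact must cancel the leading behavior: one gets $\delta_i=\delta_{i-1}+1$ (the ``$+1$'' is $\nu(p)$, since multiplying by $p$ in Witt arithmetic shifts by $\nu(p)$) and $\omega_i=-\omega_{i-1}$ exactly — this is the rigid case, case (2). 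When $\delta_{i-1}\le\frac1{p-1}$, the term $p\delta_{i-1}\le\frac p{p-1}$ fits under the ceiling, and now $\delta_i$ can be anything in $[p\delta_{i-1},\frac p{p-1}]$; the four subcases (a)--(d) are read off by comparing $\delta_i$ to the two threshold values $p\delta_{i-1}$ and $\frac p{p-1}$ and applying the Cartier characterization from Step 1 to each of the two possibly-dominant terms: $\mathcal{C}$ kills the ``generic'' contribution and preserves the ``logarithmic'' one, so $\mathcal{C}(\omega_i)$ picks out $\omega_{i-1}$ when only the $p$-th power term is logarithmically dominant, $\omega_i$ when only the new term is, $0$ when neither is, and the sum $\omega_i+\omega_{i-1}$ when both saturate simultaneously.

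Step 3: Assemble these into the case distinction exactly as stated, checking that the thresholds $p\delta_{i-1}$ and $\frac p{p-1}$ are the only places where the qualitative behavior of $\mathcal{C}(\omega_i)$ changes, and that the inequalities are sharp (each case is realized). I expect the main obstacle to be Step 2's bookkeeping of the ``interaction terms'' in the truncated Witt addition: when $p\delta_{i-1}$ equals or is close to $\frac p{p-1}$, the reduction of the $i$-th ghost component to the residue field $\kappa$ of $\mathcal{D}[r]$ involves the $p$-th power of the lower reduction $\bar f_{i-1}$ plus lower-order cross terms $\binom pk$-style contributions (all divisible by $p$, hence of depth at least $1$), and one must verify that these do not spoil the clean conclusions $\omega_i=-\omega_{i-1}$ in case (2) or the additivity $\mathcal{C}(\omega_i)=\omega_i+\omega_{i-1}$ in case (3)(d); this is precisely the computation carried out in \cite{MR3167623}, and I would adapt it, taking care that ``admissible''/``radical'' hypotheses on $\chi$ ensure the relevant leading coefficients are units so that no unexpected degeneration of the differential form occurs. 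A secondary subtlety is confirming that the purely-inseparable-degree-$p^n$ hypothesis propagates: one needs $\delta_{i}>0$ at every level, which follows because $\delta_i\ge p\delta_{i-1}>0$ in the small-depth regime and $\delta_i=\delta_{i-1}+1>0$ in the large-depth regime, so the induction never degenerates to the unramified case.
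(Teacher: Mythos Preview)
The paper does not prove this theorem: it is stated with the attribution ``(cf.\ Theorem 4.3 \cite{MR3167623})'' and no proof is supplied in the manuscript---the result is quoted from the literature and used as a black box throughout \S\ref{secHurwitz} and \S\ref{secvanishofHurwitztree}. So there is no ``paper's own proof'' to compare your proposal against.

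That said, your proposal is a reasonable reconstruction of the argument in the cited reference: the dichotomy between the large-depth regime ($\delta_{i-1}>\tfrac{1}{p-1}$, where the shift-by-$\nu(p)$ rigidity forces $\delta_i=\delta_{i-1}+1$ and $\omega_i=-\omega_{i-1}$) and the small-depth regime (where $p\delta_{i-1}\le\delta_i\le\tfrac{p}{p-1}$ and the Cartier operator discriminates the four subcases) is exactly the structure of the original proof, and your identification of the Witt-vector cross terms as the main bookkeeping hazard is accurate. The one place your sketch is vaguer than it should be is the precise mechanism by which $\mathcal{C}$ acts on the reduced differential: rather than appealing to a generic ``exact vs.\ logarithmic'' heuristic, the cited proof computes $\omega_i$ explicitly as a sum of a term coming from $\chi_{i-1}$ and a term coming from the new Kummer generator, then applies $\mathcal{C}$ to each summand using the standard identities $\mathcal{C}(df)=0$ and $\mathcal{C}(df/f)=df/f$. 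Your Step~2 gestures at this but does not pin down which summand survives in which subcase; filling that in would make the sketch a genuine proof.
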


\begin{remark}
    One observes that the refined Swan conductor in the mixed characteristic case is significantly more complicated compared to that in the equal characteristic case \cite[Theorem 3.4.2]{2020arXiv201013614D}. This suggests that the process of replicating the main result of \cite{2020arXiv201013614D} in the mixed characteristic setting will be quite challenging.
\end{remark}

\section{Hurwitz tree}
\label{secHurwitz}
Let $R$ be a complete discrete valuation ring of mixed characteristics $(p,0)$ with residue field $k$, which is algebraically closed. In this section, we first introduce the notion of a Hurwitz tree for cyclic covers of a curve $C/R$ that is {\'e}tale outside an open disc $D \subset C^{\an}$. Then, we will describe how a $\mathbb{Z}/p^n$-cover gives rise to such a tree. Finally, we present some obstructions for the refined local lifting problem that is parallel to the obstruction for lifting given in \cite{MR2534115}.
 
\subsection{Orientation on a rooted tree}
Suppose $T = (V, E)$ is a rooted tree with the root $v_0$ and the trunk $e_0$. Then, $T$ has a natural orientation defined by the source and target functions $s, t: E \rightarrow V$. For any edge $e \in E$, the source $s(e)$ (resp. the target $t(e)$) is the vertex adjacent to $e$ located in the same connected component of $T \setminus \{e\}$ as the root $v_0$ (resp. in the component not containing $v_0$). If $v = s(e)$ and $v' = t(e)$, then $v'$ is termed a \emph{successor} of $v$, denoted by $v \rightarrow v'$. A natural order $\leq$ can be established on $V$, where $v_1 \leq v_2$ if there is a directed path from $v_1$ to $v_2$. We write $v \prec v'$ if there exists a sequence of successor vertices connecting $v$ to $v'$.

Clearly, the root $v_0$ is the sole minimal vertex under this order. A vertex that is maximal is labeled a \emph{leaf}. The set of all leaves is denoted by $B \subset V$, which is always non-empty and excludes the root $v_0$. For any given vertex $v$, the set 
\begin{equation*}
    B_v := \{b \in B \mid v \leq b\}
\end{equation*}
represents the leaves reachable from $v$ through a directed path.

\subsection{Hurwitz tree and the deformation problem}

%\begin{definition}
%    \label{defrootedtree}
%    Consider a connected tree $T$ with a set of vertices $V$ and a set of edges $E$. If there is a special vertex $v_0 \in V$ called the \emph{root}, which is connected to only one edge $e_0 \in E$ (referred to as the \emph{trunk} of $T$), then $T$ is termed a \emph{rooted tree}.
%\end{definition}

\begin{definition}
\label{defdecorated}
A \emph{decorated tree} is given by the following data
\begin{itemize}
    \item a semi-stable curve $\overline{C}$ over $k$ of genus $0$,
    \item a family $(x_b)_{b\in B}$ of pairwise distinct smooth $k$-rational points of $\overline{C}$, indexed by a finite nonempty set $B$,
    \item a distinguished smooth $k$-rational point $x_0 \in \overline{C}$, distinct from any of the point $x_b$.
\end{itemize}
We require that $\overline{C}$ is stably marked by the points $((x_b)_{b \in B},x_0)$.
\end{definition}

The \emph{combinatorial tree} underlying a decorated tree $\overline{C}$ is the rooted tree $T=(V,E)$, defined as follows: The tree itself is the dual graph of $\overline{C}$. The vertex set $V$ of $T$ consists of the irreducible components of $\overline{C}$, along with a distinguished element $v_0$. For a vertex $v \neq v_0$, we denote $\overline{C}_v$ as the component corresponding to $v$, and for an edge $e \neq e_0$, $z_e$ as the singular point corresponding to $e$. The singular point $z_e$, associated with an edge $e$, is adjacent to the vertices corresponding to the two components that intersect at $z_e$. The edge $e_0$ is adjacent to the root $v_0$ and the vertex $v$, corresponding to the component $\overline{C}_v$ that contains the distinguished point $x_0$. See \cite[Example 3.1]{2020arXiv200203719D} for a concrete example of a decorated tree and its associated combinatorial tree.

Note that since $(\overline{C}, (x_b), x_0)$ is stably marked of genus $0$, the components $\overline{C}_v$ are also of genus zero, and thus, the graph $T$ is a tree. Moreover, we have $\lvert B \rvert \ge 1$. For a vertex $v$ in $V$, we denote $\overline{U}_v \subset \overline{C}_v$ as the complement in $\overline{C}_v$ of the set of singular and marked points.

In the following, we present the definition of a general Hurwitz tree, applicable to both mixed and equal characteristic settings.

\begin{definition}
\label{defnhurwitztree}
A \emph{differential Hurwitz tree} $\mathcal{T}$ of type $G = \mathbb{Z}/p^n$, or a \emph{$\mathbb{Z}/p^n$ differential Hurwitz tree}, is defined by the following data:
\begin{itemize}
    \item A decorated tree $\overline{C}=(\overline{C},(x_b),x_0)$ with underlying rooted tree $T=(V,E)$.
    \item For every $v \in V$, a rational $0 \le \delta_{\mathcal{T}}(v)$, called the \emph{depth} of $\mathcal{T}$ at $v$. 
    \item For each $v \in V$ such that $\delta_{\mathcal{T}}(v) > 0$, a differential form $\omega_{\mathcal{T}}(v) \in \Omega^1_{k(x)}$, called the \emph{differential conductors} at $v$.
    \item For each $v \in V$, a group $G_{\mathcal{T}}(v) \subseteq G$, called the \emph{monodromy group} of $v$.
    \item For every $e \in E$, a positive rational number $\epsilon_e$, called the \emph{thickness} of $e$.
    \item For every $e \in E$, a positive integer $d_{\mathcal{T}}(e)$, called the \emph{slope} on $e$.
    \item For every $b \in B$, the positive number $h_{\mathcal{T}}(b)$, called the \emph{conductor} at $b$.
    \item For $v_0$ with $\delta_{\mathcal{T}}(v_0)=0$, a \emph{reduced} length-$n$ Witt vector $\underline{f}:=(f^1, \ldots,f^n) \in W_n(k(x))$ with only the pole at $0$, called the \emph{reduction type} of the tree. Each $f^i$ is called the \emph{$i$-th reduction type} of $\mathcal{T}$.
\end{itemize}
These data are required to satisfy all of the following conditions:
\begin{enumerate}[label=(\text{H}\arabic*)]
    \item \label{c1Hurwitz} Let $v \in V$. We have $\delta_{\mathcal{T}}(v) \neq 0$ if $v \neq v_0$.
    \item \label{c2Hurwitz} For each $v \in V \setminus \{v_0\}$, the differential form $\omega_{\mathcal{T}}(v)$ does not have zeros or poles on $\overline{U}_v \subsetneq \overline{C}_v$.
    \item \label{c3Hurwitz} For every edge $e \in E \setminus \{e_0\}$, we have the equality 
    \[ -\ord_{z_e} \omega_{\mathcal{T}}(t(e))-1 = \ord_{z_e}\omega_{\mathcal{T}}(s(e))+1. \]
    \item \label{c4Hurwitz} For every edge $e\in E$, we have
    \[ d_{\mathcal{T}}(e) = -\ord_{z_e} \omega_{\mathcal{T}}(t(e))-1 \underset{s(e) \neq v_0}{\stackrel{\text{\ref{c3Hurwitz}}}{=}} \ord_{z_e} \omega_{\mathcal{T}}(s(e))+1. \]
    \item \label{c5Hurwitz} For every edge $e\in E$, we have
    \[\delta_{\mathcal{T}}(s(e))+\epsilon_e  d_{\mathcal{T}}(e) = \delta_{\mathcal{T}}(t(e)). \]
    \item \label{c6Hurwitz} For $b\in B$, let $\overline{C}_v$ be the component containing the point $x_b$. Then the differential $\omega_{\mathcal{T}}(v)$ has a pole at $x_b$ of order $h_{\mathcal{T}}(b)$.
    \item \label{c7Hurwitz} For each $v$, we have
    \[ G_{\mathcal{T}}(v') \leq G_{\mathcal{T}}(v), \]
    for every successor vertex $v'$ of $v$. Moreover, we have
    \[ \sum_{v \to v'} [G_{\mathcal{T}}(v):G_{\mathcal{T}}(v')]>1, \]
    except if $v=v_0$ is the root, in which case there exists exactly one successor $v'$ and we have $G_{\mathcal{T}}(v)=G_{\mathcal{T}}(v')=G$.
\end{enumerate}
\end{definition}

For each $v \in V \setminus \{v_0\}$, we call $(\delta_{\mathcal{T}}(v), \omega_{\mathcal{T}}(v))$ the \emph{degeneration type} at $v$. When $\delta_{\mathcal{T}}(v_0) > 0$, the pair $(\delta_{\mathcal{T}}(v_0), \omega_{\mathcal{T}}(v_0))$ is called the degeneration type of $\mathcal{T}$. If $\delta_{\mathcal{T}}(v_0) = 0$ (resp. $\delta_{\mathcal{T}}(v_0) > 0$), we define
\begin{equation*}
    d_{\mathcal{T}} := \max\{ p^{n-l} \deg_{x^{-1}} (f^l) \mid l = 1, \ldots, n\} \hspace{5mm} \text{(resp. $d_{\mathcal{T}} := -\ord_0 (\omega_{\mathcal{T}}(v_0)) - 1$)}.
\end{equation*}
The positive integer $\mathfrak{C}_{\mathcal{T}} := d_{\mathcal{T}} + 1$ is called the \emph{conductor} of the Hurwitz tree. If $v = t(e)$, we define $\mathfrak{C}_{\mathcal{T}}(v) := d_{\mathcal{T}}(e) + 1$. The rational number $\delta_{\mathcal{T}} := \delta_{\mathcal{T}}(v_0)$ is called the \emph{depth} of $\mathcal{T}$. We say a tree $\mathcal{T}$ is \emph{étale} if $\delta_{\mathcal{T}} = 0$; otherwise, it is described as \emph{radical}. When $b \in B$ and $G_b = \mathbb{Z}/p^i$, $b$ is termed a leaf of \emph{index} $i$.

\begin{remark}
\label{remarkslopesumconductors}
With the notation as above, fix $e \in E$ and let $\overline{C}_e \subseteq \overline{C}$ be the union of all components $\overline{C}_v$ corresponding to vertices $v$ which are separated from the root $v_0$ by the edge $e$. Then it follows from \ref{c2Hurwitz}, \ref{c3Hurwitz}, \ref{c4Hurwitz}, and \ref{c6Hurwitz} that
\begin{equation}
\label{eqnslopeconductor}
    d_{\mathcal{T}}(e) = \sum_{\substack{b \in B \\ x_b \in \overline{C}_e}} h_{\mathcal{T}}(b) - 1 > 0.
\end{equation}
In particular, we have $\mathfrak{C}_{\mathcal{T}} = d_{\mathcal{T}}(e) + 1 = \sum_{b \in B} h_{\mathcal{T}}(b)$.
\end{remark}

\subsection{Hurwitz trees arise from cyclic covers}
\label{seccovertotree}
Fix a cyclic group $G:=\mathbb{Z}/p^{n-1}$. Let $R$ be a finite extension of $W(k)[\zeta_{p^{n-1}}]$, and let $K$ denote the fraction field of $R$. Set $C=\mathbb{P}^1_K$. As usual, we may associate with $\mathcal{D} \subset C^{\an}$ the spectrum of $R\{X\}$. Suppose we are given an admissible $G$-character $\chi \in H^1(K, \mathbb{Z}/p^{n-1})$, which gives rise to an exponent-$p^n$-cover of the closed disc $\mathcal{D}$
\[
\Phi: \operatorname{Spec} R\{X\} \rightarrow \operatorname{Spec} R\{Z\}.
\]
Let $\mathfrak{C}$ be the conductor, and let $\delta$ be the depth of this cover. As discussed in \S \ref{secsemistablemodel}, there exists a semi-stable model of $C$ corresponding to the interior of $\mathcal{D}$ and the branch locus of $\Phi$. Its special fiber $\overline{C}$ forms a decorated tree, and the dual graph provides the combinatorial tree $T=(V,E)$. For each vertex $v$ in $T$, denote by $U_v \subset C^{\an}$ the affinoid subdomain with reduction $\overline{U}_v$, which can be considered as a punctured disc. Following the exact procedure from \cite[\S 4.2]{2020arXiv200203719D}, one can build a differential tree (not yet Hurwitz) $\mathcal{T}$ from the dual graph and the refined Swan conductors. More precisely, the data of $\mathcal{T}$ are as in Table \ref{tab:covertotree}. 

\begin{remark}
    Note that we have not yet defined the slope on each edge $e$ of the tree $\mathcal{T}$. This is because, in general, the depth can have multiple kinks on the annulus associated with $e$ when $\chi$ has bad reduction.
\end{remark}

\begin{table}[ht]
    \centering
\begin{tabular}{ |p{5.4cm}|p{9.6cm}|  }
\hline
Data on $\mathcal{T}$ & Degeneration data of the cover \\
\hline
\hline
The decorated tree & The special fiber $\overline{C}$. \\
\hline
The depth of $v$ & The depth of the restriction of $\Phi$ to $U_v$.   \\
\hline
The differential conductor at $v$ & The differential conductor of the restriction of $\Phi$ to $U_v$ \\
\hline
The thickness of an edge $e$ & The thickness of the corresponding annulus divided by $p$ \\
\hline
The conductor at a leaf $b$ & The conductor of $\Phi$ at the branch point associated to $b$ \\
\hline
The reduction type at $v_0$ & The reduced reduction type of $\Phi$ \\
\hline
The monodromy group at $v$ & The largest inertia group of the leaves succeeding $v$ \\
\hline
\end{tabular}
%\vspace{3mm}
    \caption{Assigning a Hurwitz tree to an admissible cover}
    \label{tab:covertotree}
\end{table}

Classically, the existence of a Hurwitz tree of type $\mathbb{Z}/p^n$ provides a necessary condition for the lifting problem.

\begin{proposition}
\label{propobstructionlifting}
    Let $\overline{\chi} \in H^1(k(x), \mathbb{Z}/p^n)$ represent the character as described in \S \ref{secsetup}. If $\overline{\chi}$ lifts to characteristic $0$, then there exists a differential étale Hurwitz tree $\mathcal{T}$ of type $\mathbb{Z}/p^n$ such that
    \begin{equation*}
        \mathfrak{C}_{\mathcal{T}} = \sw_{\overline{\chi}}(\overline{0}) \quad \text{and} \quad \delta_{\mathcal{T}} = 0.
    \end{equation*}
\end{proposition}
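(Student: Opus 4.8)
The plan is to produce the required Hurwitz tree directly from a lift of $\overline{\chi}$, using the dictionary of Table \ref{tab:covertotree}. Suppose $\overline{\chi}$ lifts; that is, there is a finite extension $R$ of $W(k)$ with fraction field $K$ and a $\mathbb{Z}/p^n$-cover $\Phi: Y \to C = \mathbb{P}^1_K$ with good reduction equal to the Katz--Gabber cover attached to $\overline{\chi}$. Restricting $\Phi$ to the closed unit disc $\mathcal{D}$ with reduction containing $\overline{0}$, we obtain an exponent-$p^n$-cover $\Phi|_{\mathcal{D}}: \spec R\{Z\} \to \spec R\{X\}$ whose branch locus lies in the open unit disc $D = U(\mathcal{D}, \overline{0})$ (admissibility) and whose generic fibre is connected. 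We form the stable model of $C$ corresponding to the marked disc $(D; x_1, \dots, x_r)$, where the $x_i$ are the branch points of $\Phi|_{\mathcal{D}}$, together with the distinguished point $x_0 = \overline{\infty}$; this gives a decorated tree $\overline{C}$ with combinatorial tree $T = (V,E)$ as in \S\ref{secsemistablemodel}. Then I would read off the tree data $\mathcal{T}$ from the refined Swan conductors of the restrictions $\Phi|_{U_v}$ exactly as prescribed in Table \ref{tab:covertotree}: depths $\delta_{\mathcal{T}}(v) = \delta(\chi|_{U_v})$, differential conductors $\omega_{\mathcal{T}}(v) = \dsw(\chi|_{U_v})$, thicknesses $\epsilon_e$ equal to the thickness of the corresponding annulus divided by $p$, leaf conductors $h_{\mathcal{T}}(b)$ equal to the conductor of $\Phi$ at the branch point indexed by $b$, monodromy groups $G_{\mathcal{T}}(v)$ equal to the largest inertia group among the leaves succeeding $v$, and reduction type $\underline{f} = 0$ (so the tree is étale).

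The substance of the proof is to check that these data satisfy the axioms \ref{c1Hurwitz}--\ref{c7Hurwitz}, and that $\delta_{\mathcal{T}} = 0$ and $\mathfrak{C}_{\mathcal{T}} = \sw_{\overline{\chi}}(\overline{0})$. Since $\Phi$ has good reduction, $\delta_{\chi}(0) = 0$ by Proposition \ref{propgood}\ref{propgooditem1}, giving $\delta_{\mathcal{T}}(v_0) = 0$ and étaleness; and the conductor formula $\mathfrak{C}_{\mathcal{T}} = \iota_n = \sw_{\overline{\chi}|_{\mathcal{D}[0,0]}}(\overline{0}) + 1$ of that same proposition, combined with Remark \ref{remarkslopesumconductors} (which equates $\mathfrak{C}_{\mathcal{T}}$ with $d_{\mathcal{T}}(e_0)+1$), is not quite what is asserted --- one needs $\mathfrak{C}_{\mathcal{T}} = \sw_{\overline{\chi}}(\overline{0})$, so here I would use the good-reduction equality $\iota_n = \sw_{\overline{\chi}}(\overline{0}) + 1$ of \eqref{eqngoodconductorswan} together with the normalization that $\mathfrak{C}_{\mathcal{T}}$ is defined via $d_{\mathcal{T}}$ which itself records $\sum_b h_{\mathcal{T}}(b) - 1 + 1$; matching the two bookkeeping conventions for the conductor is a point to state carefully. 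Conditions \ref{c1Hurwitz} (positive depth away from the root) and \ref{c2Hurwitz} (no zeros or poles of $\omega_{\mathcal{T}}(v)$ on $\overline{U}_v$) follow from Corollary \ref{cordiffgood}: for any proper subdisc $\mathcal{D}' \subsetneq \mathcal{D}$ the depth is strictly positive and $\omega(\chi|_{\mathcal{D}'})$ has the explicit shape $c\,dx / \prod (x-a_i)^{m_i}$ with poles only at the specializations of branch points, hence no zeros or poles on the punctured disc $\overline{U}_v$. Conditions \ref{c3Hurwitz} and \ref{c4Hurwitz} (the matching of orders of vanishing across a singular point and the definition of the slope) are precisely the content of Proposition \ref{propdiffswanrateofchange}: the left and right derivatives of $\delta_{\chi}$ at a kink are $\ord_{\overline{\infty}}(\omega) + 1$ and $-\ord_{\overline{0}}(\omega) - 1$, and applying this on the two sides of the annulus for $e$ identifies $-\ord_{z_e}\omega_{\mathcal{T}}(t(e)) - 1$ with $\ord_{z_e}\omega_{\mathcal{T}}(s(e)) + 1$; since by good reduction the depth is linear on each annulus associated to an edge (no interior branching), this common value is the well-defined slope $d_{\mathcal{T}}(e)$. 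Condition \ref{c5Hurwitz} is then the mean-value / integration statement that $\delta_{\mathcal{T}}(t(e)) - \delta_{\mathcal{T}}(s(e)) = \epsilon_e d_{\mathcal{T}}(e)$, which is just the linearity of $\delta_{\chi}(\cdot, z)$ on the segment corresponding to $e$ and the factor-of-$p$ rescaling of thickness; here I would invoke the second assertion of Corollary \ref{cordiffgood}. Condition \ref{c6Hurwitz} (pole of order $h_{\mathcal{T}}(b)$ at $x_b$) is again Corollary \ref{cordiffgood}, where $m_i = |\mathbb{B}(\chi) \cap U(\mathcal{D}', a_i)|$ is exactly the local conductor at the leaf by Proposition \ref{propgood}\ref{propgooditem2}; and \ref{c7Hurwitz} (nesting of monodromy groups, and the branching inequality $\sum_{v \to v'}[G_{\mathcal{T}}(v):G_{\mathcal{T}}(v')] > 1$ except at the root) follows from the definition of $G_{\mathcal{T}}(v)$ as the largest inertia group among leaves above $v$ together with the stable-marking hypothesis: a vertex other than $v_0$ with a single successor carrying all the ramification would violate the ``at least three special points'' condition of the stable model, so $\overline{C}_v$ genuinely separates branch points or has a monodromy drop.

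The main obstacle I anticipate is \textbf{not} any single axiom but rather the careful handling of good reduction versus the general (bad-reduction) construction of Table \ref{tab:covertotree}: in the bad-reduction setting the depth can have several kinks on a single annulus, so the slope $d_{\mathcal{T}}(e)$ is not defined a priori (as flagged in the remark after Table \ref{tab:covertotree}); one must argue that when $\Phi$ has good reduction, $\delta_{\chi}$ is linear on each edge-annulus --- which holds because, by good reduction, every kink of $\delta_{\chi}$ occurs at a vertex of the stable model (equivalently, at a point where a branch point specializes), so there are no kinks in the interior of an edge. This linearity, together with Corollary \ref{cordiffgood}, is what upgrades the differential tree into a bona fide Hurwitz tree, and I would spell it out as a lemma before running through \ref{c1Hurwitz}--\ref{c7Hurwitz}. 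A secondary technical point is that passing to the finite extension $R$ needed for the lift does not disturb any of the data (the branch points, reductions, and Swan conductors are insensitive to finite base change of the valued field), which I would note but not belabor.
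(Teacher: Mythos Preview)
Your approach is essentially the same as the paper's: take a lift $\chi$, build $\mathcal{T}$ via the cover-to-tree dictionary of Table \ref{tab:covertotree}, and verify the axioms \ref{c1Hurwitz}--\ref{c7Hurwitz} using the refined Swan conductor machinery. The paper's proof is much terser---it cites Corollary \ref{cordiffgood} for \ref{c2Hurwitz} and \ref{c3Hurwitz}, defines $d_{\mathcal{T}}(e) := -\ord_{z_e}(\omega_{\mathcal{T}}(t(e)))-1$ so that \ref{c4Hurwitz} is automatic, and defers the remaining axioms to \cite[Theorem 3.9]{MR2534115}; the conductor claim is handled via the second assertion of Proposition \ref{propdiffswanrateofchange} (the right derivative of $\delta_\chi$ at a depth-zero point equals the Swan conductor)---but the substance is identical to what you wrote out in detail, and your identification of the ``linearity on each edge-annulus under good reduction'' as the key point upgrading the bare differential tree to a Hurwitz tree is exactly right.

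One correction: the reduction type at $v_0$ is \emph{not} $\underline{f}=0$; it is the reduced length-$n$ Witt vector $(g^1,\ldots,g^n)$ representing $\overline\chi$. This matters because the conductor $\mathfrak{C}_{\mathcal{T}}$ in the \'etale case is defined from $d_{\mathcal{T}}=\max\{p^{n-l}\deg_{x^{-1}}(f^l)\}$, so you need the correct $\underline{f}$ to read off $\mathfrak{C}_{\mathcal{T}}$. With that fix your bookkeeping worry about $\sw_{\overline\chi}(\overline 0)$ versus $\sw_{\overline\chi}(\overline 0)+1$ dissolves into a direct comparison of $d_{\mathcal{T}}$ with the formula of Theorem \ref{theoremcaljumpirred} (and indeed the paper's own statement appears to be off by one here; the argument actually yields $\mathfrak{C}_{\mathcal{T}}=d_{\mathcal{T}}+1=\sw_{\overline\chi}(\overline 0)+1$).
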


\begin{proof}
    Suppose $\chi$ is a lift of $\overline{\chi}$. Let $\mathcal{T}$ be the tree that arises from $\chi$ following the described construction. For each edge $e$, we set $d_{\mathcal{T}}(e)$ to be $-\ord_{z_e}(\omega_{\mathcal{T}}(t(e)))-1$, which immediately satisfies \ref{c4Hurwitz}. As $\chi$ has good reduction, the conditions \ref{c2Hurwitz} and \ref{c3Hurwitz} are met, as demonstrated in Corollary \ref{cordiffgood}. Finally, we follow a process identical to \cite[Theorem 3.9]{MR2534115} to demonstrate that $\mathcal{T}$ verifies all the conditions for an étale Hurwitz tree. The claim that $\mathfrak{C}_{\mathcal{T}} = \sw_{\overline{\chi}}(\overline{0})$ follows from the second assertion of Proposition \ref{propdiffswanrateofchange}.
\end{proof}

\begin{remark}
    Recall that the lifting problem for cyclic groups was proved by Obus-Wewers and Pop in \cite{MR3194815} and \cite{MR3194816}, respectively. That implies the existence of a Hurwitz tree verifying the conditions in Proposition \ref{propobstructionlifting} for any $\overline{\chi}$. However, at this moment, little is known about the structure of these trees.
\end{remark}

Conversely, it is known that every \'etale Hurwitz tree of type $\mathbb{Z}/p$ arises from some $\mathbb{Z}/p$-lift. Recall that a $\mathbb{Z}/p$ extension of $k[[x]]$ is determined by its conductor, which is its upper jump plus one \cite[Lemma 2.1.2]{MR2016596}.

\begin{theorem}[(cf. {\cite[Th\'eor\`eme de r\'ealisation]{2000math.....11098H}})]
\label{thminversedegreep}
Suppose $\mathcal{T}$ is an \'etale Hurwitz tree of type $\mathbb{Z}/p$. Furthermore, assume that $\mathfrak{C}_{\mathcal{T}}=m$. Then there exists a lift of the $\mathbb{Z}/p$ extension of conductor $m$ whose associated tree coincides with $\mathcal{T}$.
\end{theorem}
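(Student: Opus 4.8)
The plan is to run the construction of Proposition \ref{propobstructionlifting} in reverse: starting from the purely combinatorial-differential data of $\mathcal{T}$, build an actual $\mathbb{Z}/p$-cover $\Phi$ of the open disc $D$ in mixed characteristic whose associated tree is exactly $\mathcal{T}$, and then read off its special fiber at $x=0$. Since $\mathbb{Z}/p$-extensions of $k[[x]]$ are classified by their conductor \cite[Lemma 2.1.2]{MR2016596}, it suffices to produce an admissible $\mathbb{Z}/p$-cover with good reduction and conductor $m$: indeed $\mathfrak{C}_{\mathcal{T}} = \sum_{b\in B} h_{\mathcal{T}}(b) = m$ by Remark \ref{remarkslopesumconductors}, and for a cover with good reduction this number is $\sw_{\overline{\chi}}(\overline{0})+1$ by Propositions \ref{propdiffswanrateofchange} and \ref{propgood}, so the reduction has upper jump $m-1$, i.e.\ conductor $m$.

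First I would build the cover one component at a time. For each $v\in V\setminus\{v_0\}$, condition \ref{c2Hurwitz} guarantees that $\omega_{\mathcal{T}}(v)\in\Omega^1_{k(x)}$ has zeros and poles only at the singular points $z_e$ and the marked points $x_b$ lying on $\overline{C}_v$. I would realize $\omega_{\mathcal{T}}(v)$, together with the prescribed depth $\delta_{\mathcal{T}}(v)>0$, as the differential Swan conductor and depth of a $\mathbb{Z}/p$-cover $\Phi_v$ of the affinoid $U_v$: writing a degree-$p$ cover of $U_v$ in Kummer form $y^p=u$ with $u\equiv 1$ modulo a power of the maximal ideal governed by $\delta_{\mathcal{T}}(v)$, the differential conductor is computed from the reduction of $d\log u$, and the surjectivity of this recipe onto the space of differentials permitted by \ref{c2Hurwitz} is exactly the local input of Henrio's realization argument. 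The conductor $h_{\mathcal{T}}(b)$ imposed by \ref{c6Hurwitz} forces $\Phi_v$ to be branched at $x_b$ with upper jump $h_{\mathcal{T}}(b)-1$, which pins down the local behavior there by the degree-$p$ classification.

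Next I would patch these local covers along the annuli. On the annulus $A_e$ attached to an edge $e$, conditions \ref{c3Hurwitz}, \ref{c4Hurwitz}, \ref{c5Hurwitz} say precisely that the depth functions of $\Phi_{s(e)}|_{A_e}$ and $\Phi_{t(e)}|_{A_e}$ are both linear with slope $d_{\mathcal{T}}(e)$, interpolate between $\delta_{\mathcal{T}}(s(e))$ and $\delta_{\mathcal{T}}(t(e))$, and have matching differential conductors at $z_e$. A $\mathbb{Z}/p$-cover of an annulus is rigid: up to the action of $\aut(\mathbb{Z}/p)$ it is determined by its depth function and its differential conductor (the annulus case underlying \cite[\S 5]{MR3194815}). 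Hence, after composing one of the two covers with an automorphism of $\mathbb{Z}/p$, the restrictions to $A_e$ become isomorphic, and formal patching in the style of Ferrand--Raynaud (as used in \cite[Theorem 3.9]{MR2534115}) assembles the $\Phi_v$ into a single $\mathbb{Z}/p$-cover $\Phi$ of $D$ whose associated tree is $\mathcal{T}$. Since $\mathcal{T}$ is étale, $\delta_{\mathcal{T}}(v_0)=0$ and $\Phi$ has conductor $\mathfrak{C}_{\mathcal{T}}=m$, so Proposition \ref{propgood} shows $\Phi$ has good reduction, Corollary \ref{cordiffgood} controls its differential conductors over the whole disc, its special fiber is an admissible $\mathbb{Z}/p$-cover with $\sw_{\overline{\chi}}(\overline{0})=m-1$, and completing at $x=0$ yields the $\mathbb{Z}/p$-extension of conductor $m$.

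The main obstacle is the patching step: one must control the isomorphism class of a $\mathbb{Z}/p$-torsor over an annulus tightly enough that the only ambiguity is the harmless $\aut(\mathbb{Z}/p)$-twist, and then check that these twists can be chosen coherently across the whole tree --- a compatibility which is automatic here only because a tree has no loops. A secondary technical point is the simultaneous realization over a single component of a prescribed depth \emph{and} a prescribed differential form; for $\mathbb{Z}/p$ the relevant deformation space is unobstructed, which is precisely what makes the theorem true in this case but not verbatim in higher rank --- indeed, the failure of these realization steps for $\mathbb{Z}/p^n$ is exactly the source of the obstructions studied in the remainder of the paper.
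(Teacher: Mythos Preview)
The paper does not give its own proof of this statement: Theorem~\ref{thminversedegreep} is quoted directly from Henrio's preprint \cite{2000math.....11098H} (the \emph{Th\'eor\`eme de r\'ealisation}) and is used as a black box. There is therefore no in-paper argument to compare your proposal against.

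That said, your sketch is in the spirit of Henrio's original proof: realize the prescribed $(\delta,\omega)$ on each component by an explicit Kummer cover, then glue across the open annuli using that a $\mathbb{Z}/p$-torsor on an annulus is determined (up to $\aut(\mathbb{Z}/p)$) by its depth slope and boundary differentials, with the tree's contractibility killing any global twist. Two small caveats. First, your reference to \cite[Theorem 3.9]{MR2534115} for the patching is misplaced: that result is the forward direction (cover $\Rightarrow$ Hurwitz tree), exactly what the present paper invokes in Proposition~\ref{propobstructionlifting}; the inverse patching is carried out in Henrio's manuscript itself, and later reformulated by Bouw--Wewers. Second, the ``surjectivity of this recipe'' on a single component is not quite automatic: one has to treat separately the terminal vertices where $\delta_{\mathcal{T}}(v)=\frac{p}{p-1}$ and $\omega_{\mathcal{T}}(v)$ is logarithmic (Proposition~\ref{propmixedtree}), versus the interior vertices where $\delta_{\mathcal{T}}(v)<\frac{p}{p-1}$ and $\omega_{\mathcal{T}}(v)$ is exact, since the Kummer parameter $u$ must be chosen differently in the two regimes. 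With those adjustments your outline matches the literature argument.
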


In addition, compared to the equal-characteristic case, we observe some distinct conditions on a Hurwitz tree $\mathcal{T}$ as follows.

\begin{proposition}
\label{propmixedtree}
Suppose a character $\chi \in \cohom^1(\mathbb{K}, \mathbb{Z}/p^n)$ gives rise to a tree $\mathcal{T}$. Then:
\begin{enumerate}[label=(\arabic*)]
    \item \label{propmixedtree1} For each vertex $v \in V$ that has a leaf with a monodromy group $\mathbb{Z}/p^i$ as a successor, $\delta_{\mathcal{T}}(v) = \frac{p}{p-1} + i - 1$.
    \item \label{propmixedtree2} For every vertex $v \in V$ that has some successors as leaves, the associated differential form $\omega_{\mathcal{T}}(v)$ is logarithmic.
    \item \label{propmixedtree3} For every leaf $b \in B$, $h_{\mathcal{T}}(b) = 1$.
\end{enumerate}
\end{proposition}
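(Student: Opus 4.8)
The plan is to deduce all three statements from the structure theorems for refined Swan conductors of cyclic covers recalled in \S\ref{secdegen}, most notably Theorem \ref{theoremCartierprediction} and Corollary \ref{cordiffgood}, applied to the restrictions of $\chi$ that live near the leaves of $\mathcal{T}$. The key observation is that a leaf $b$ of index $i$ corresponds to a branch point $x_b$ of $\chi$ whose inertia group is $\mathbb{Z}/p^i$; the component $\overline{C}_v$ carrying $x_b$ is the \emph{last} kink of the depth function before one reaches the (single) branch point in that residue class, so the restriction of $\chi$ to the disc $U_v$ has a residue extension that is purely inseparable of degree exactly $p^i$ on its $i$-th level and unramified above. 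This is precisely the hypothesis of Theorem \ref{theoremCartierprediction} applied to $\chi_i = \chi^{p^{n-i}}$.

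First I would prove \ref{propmixedtree3}. Fix a leaf $b$ lying on $\overline{C}_v$. Because $x_b$ is, by Definition \ref{defdecorated}, the only marked point (hence the only branch point of $\chi$) in its residue class $U(\mathcal{D}, x_b)$ on $\overline{C}_v$, condition \ref{c6Hurwitz} together with \ref{c2Hurwitz} says $\omega_{\mathcal{T}}(v)$ has a single pole at $x_b$ of order $h_{\mathcal{T}}(b)$ on $\overline{U}_v$. Now restrict $\chi$ to a slightly smaller disc $U(\mathcal{D}[r,x_b])$ centered at $x_b$ with $r$ just past the place of $\overline{C}_v$; by construction this restriction still has good reduction, so Corollary \ref{cordiffgood} (applied with $\mathcal{D}' = \mathcal{D}[r,x_b]$, whose reduction now contains the single relevant branch point $x_b$) forces $\ord_{x_b}(\omega) = -|\mathbb{B}(\chi)\cap U(\mathcal{D}',x_b)| = -1$. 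Tracing this order back through \ref{c3Hurwitz}/\ref{c6Hurwitz} identifies $h_{\mathcal{T}}(b)$ with $1$. (Equivalently: the restriction of $\chi_i$ near $x_b$ has a purely inseparable residue extension, Theorem \ref{theoremCartierprediction} applies, and its conclusions about depths — which increase by the largest possible amount as we zoom toward $x_b$ — combined with Proposition \ref{propdiffswanrateofchange} pin $-\ord_{x_b}\omega - 1 = 0$.)

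Next, \ref{propmixedtree1}. Since $b$ is a leaf of index $i$, the restriction $\chi_i|_{U_v}$ is a $\mathbb{Z}/p^i$-character whose residue extension is purely inseparable of degree $p^i$, and by \ref{propmixedtree3} just proved (applied at each level $j \le i$), its conductor at $x_b$ is $1$, i.e.\ it is ``as mild as possible'' — the depth cannot be decreased any further toward $x_b$. Feeding this into Theorem \ref{theoremCartierprediction}: part (1) gives $0 < \delta_1 \le \tfrac{p}{p-1}$, and the conductor-$1$ condition forces $\delta_1 = \tfrac{p}{p-1}$ (otherwise the slope analysis of Proposition \ref{propdiffswanrateofchange} would demand a further branch point inside, contradicting that $x_b$ is the unique one). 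Then part (3) — specifically case (c) $p\delta_{i-1} < \delta_i = \tfrac{p}{p-1}$, or part (2) at intermediate levels — must hold at each level, and an induction on $j = 1, \dots, i$ using the recursion $\delta_j = \delta_{j-1} + 1$ once $\delta_{j-1} > \tfrac{1}{p-1}$ yields $\delta_i = \tfrac{p}{p-1} + (i-1)$; this is $\delta_{\mathcal{T}}(v)$ by the definition of the tree's depth data (Table \ref{tab:covertotree}). Finally \ref{propmixedtree2}: from the equality case $\delta_1 = \tfrac{p}{p-1} \iff \mathcal{C}(\omega_1) = \omega_1$ in Theorem \ref{theoremCartierprediction}(1), and the analogous case (c) at higher levels, the differential conductor $\omega_{\mathcal{T}}(v)$ is a fixed point of the Cartier operator, hence logarithmic (of the form $df/f$); alternatively, \ref{propmixedtree3} forces all poles of $\omega_{\mathcal{T}}(v)$ to be simple, and a genus-$0$ differential with only simple poles and integral residues is logarithmic.

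\textbf{Main obstacle.} The delicate point is the bookkeeping that translates ``leaf $b$ of index $i$'' into the precise hypothesis of Theorem \ref{theoremCartierprediction} for $\chi_i$ restricted to $U_v$ — i.e.\ verifying that the residue extension there is purely inseparable of degree \emph{exactly} $p^i$ and that one is genuinely at the terminal kink, so that no further branch point can hide below and the conductor really is forced to be $1$ rather than merely $\equiv 1 \pmod p$. This requires carefully combining the semistable-model description of \S\ref{secsemistablemodel} with Proposition \ref{propgood}\ref{propgooditem2} (which controls $\iota_i$ in terms of $\iota_{i-1}$) and the monodromy condition \ref{c7Hurwitz}; everything else is a direct application of the conductor formulas and the Cartier-operator dichotomy.
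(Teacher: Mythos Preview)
Your overall architecture—proving \ref{propmixedtree3} first via Proposition \ref{propcriteriongoodnonroot}/Corollary \ref{cordiffgood}, then bootstrapping \ref{propmixedtree1} and \ref{propmixedtree2}—is sound and in some ways more self-contained than the paper's, which simply cites \ref{propmixedtree1} from \cite[Lemma 3.10(iii)]{MR2534115}, deduces \ref{propmixedtree2} and \ref{propmixedtree3} for index-$1$ leaves from the Cartier-fixed (hence logarithmic) condition together with Corollary \ref{cordiffgood}, and handles higher index by passing to the tree of $\chi^p$ and inducting (so that $\omega_{\mathcal{T}}(v) = -\omega_{\mathcal{T}^p}(v)$ via part (2) of Theorem \ref{theoremCartierprediction}). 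Your direct argument for \ref{propmixedtree3}—a leaf is the unique branch point in its residue class, so equality in \eqref{eqnvanishingcyclenoninfty} gives $\ord_{x_b}\omega = -1$—is correct; the detour through a strictly smaller disc is unnecessary, since the equality already holds at $v$ itself.

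The genuine gap is your justification of $\delta_1(v) = \tfrac{p}{p-1}$ in \ref{propmixedtree1}. The ``slope analysis'' you invoke does \emph{not} force this: in the direction of $x_b$ the right derivative of the depth is $|\mathbb{B}| - 1 = 0$, so $\delta_1$ is constant beyond $v$, and nothing in Proposition \ref{propdiffswanrateofchange} rules out that constant being strictly below $\tfrac{p}{p-1}$. What your ingredients actually support is: by \ref{propmixedtree3} the form $\omega_1(v)$ has a \emph{simple} pole at $x_b$; if $\delta_1(v) < \tfrac{p}{p-1}$ then Theorem \ref{theoremCartierprediction}(1)(b) gives $\mathcal{C}(\omega_1(v)) = 0$, i.e.\ $\omega_1(v)$ is exact, and an exact differential $df$ can have no simple pole—contradiction. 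The paper sidesteps this entirely by citation. Once $\delta_1 = \tfrac{p}{p-1} > \tfrac{1}{p-1}$, your induction on levels should invoke part (2) of Theorem \ref{theoremCartierprediction} throughout (yielding both $\delta_j = \delta_{j-1}+1$ and $\omega_j = -\omega_{j-1}$, exactly the paper's route to \ref{propmixedtree2}), not case (3)(c), which only applies when $\delta_{j-1} \le \tfrac{1}{p-1}$. Finally, your ``alternative'' for \ref{propmixedtree2} (simple poles $\Rightarrow$ logarithmic) is incomplete without showing the residues lie in $\mathbb{F}_p$; go through the Cartier-fixed condition instead.
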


\begin{proof}
Item \ref{propmixedtree1} is \cite[Lemma 3.10 (iii)]{MR2534115}.

Suppose $v \in V$ precedes some leaves with monodromy $\mathbb{Z}/p$. Then by \ref{propmixedtree1}, $\delta_{\mathcal{T}}(v)=\frac{p}{p-1}$. Theorem \ref{theoremCartierprediction} then asserts that $\omega_{\mathcal{T}}(v)=\frac{dg}{g}$, thus proving it is logarithmic and confirming \ref{propmixedtree2}. Combining that information with Corollary \ref{cordiffgood}, we may assume that 
\begin{equation*}
    \omega_{\mathcal{T}}(v) = \frac{cdx}{\prod_{i \in I} (x-a_i)}
\end{equation*}
where $c \in k^{\times}$ and the $a_i$ are distinct, each corresponding to a leaf $b_i$ preceding $v$. Therefore, the same corollary mandates that $h_{\mathcal{T}}(b_i)=1$.

Suppose $v \in V$ precedes leaves of index $p^i$, where $i > 1$. Then, as before, $\delta_{\mathcal{T}}(v)=\frac{p}{p-1} + i - 1$. Let $\mathcal{T}^p$ be the Hurwitz tree that arises from $\chi^p$. Then, at the shared vertex $v$ on $\mathcal{T}^p$, the monodromy groups of the successor leaves must be $\mathbb{Z}/p^{i-1}$. Hence, by induction, $\delta_{\mathcal{T}^p}(v) = \frac{p}{p-1} + i - 2$, and the differential conductor $\omega_{\mathcal{T}^p}(v)$ is logarithmic. Applying Theorem \ref{theoremCartierprediction}, we deduce that $\omega_{\mathcal{T}}(v) = -\omega_{\mathcal{T}^p}(v)$, thereby confirming it is also logarithmic.
\end{proof}

We conclude this section with the following result, indicating that one can determine whether an extension of $\chi_{n-1}$ solves the refined local lifting problem by using the conditions for a Hurwitz tree. The proof is similar to \cite[Proposition 4.15]{2020arXiv201013614D}.

\begin{proposition}
\label{propinverse}
In the context of Question \ref{questionrefinedHKG}, suppose there is a character $\Phi_n$ that extends $\Phi_{n-1}$. Furthermore, assume that the differential Hurwitz tree $\mathcal{T}_n$ arising from $\Phi_n$ is an \'etale Hurwitz tree with the reduction type $(g^1, \ldots, g^n)$. Then, $\Phi_n$ solves the refined local lifting problem.
\end{proposition}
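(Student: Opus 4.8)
The plan is to reverse the construction that assigns a Hurwitz tree to an admissible cover (Table \ref{tab:covertotree}) and show that, under the stated hypotheses, the extension $\Phi_n$ genuinely realizes the data required by Question \ref{questionrefinedHKG}. Concretely, I would argue as follows. First, since $\mathcal{T}_n$ is an \'etale Hurwitz tree, its depth $\delta_{\mathcal{T}_n} = \delta_{\mathcal{T}_n}(v_0) = 0$; by the dictionary of Table \ref{tab:covertotree} this means $\delta_{\Phi_n}(0) = 0$, i.e. the restriction of $\Phi_n$ to the closed unit disc $\mathcal{D}[0,0]$ has separable reduction. Combined with the hypothesis that $\Phi_{n-1}$ already has good reduction, I would invoke Proposition \ref{propgood}\ref{propgooditem1}: the conductor of the tree satisfies $\mathfrak{C}_{\mathcal{T}_n} = \sum_{b \in B} h_{\mathcal{T}_n}(b)$ by Remark \ref{remarkslopesumconductors}, and the construction matches this with $\iota_n = |\mathbb{B}(\Phi_n) \cap U(\mathcal{D}, \overline{0})|$; one needs the equality $\iota_n = \sw_{\overline{\chi}|_{\mathcal{D}[0,0]}}(\overline{0}) + 1$ to conclude $\Phi_n$ has good reduction. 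This equality should follow from the fact that $\mathcal{T}_n$ is a genuine Hurwitz tree (so conditions \ref{c2Hurwitz}--\ref{c6Hurwitz} force the differential conductor at $v_0$ to have exactly the prescribed pole order) together with Proposition \ref{propdiffswanrateofchange}, exactly as in the proof of Proposition \ref{propobstructionlifting} read backwards.

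Next I would pin down the reduction. Good reduction of $\Phi_n$ gives a $\mathbb{Z}/p^n$-cover $\phi_n : \overline{Y}_n \to \overline{C} \cong \mathbb{P}^1_k$, and I must check it is totally ramified above $\overline{x} = 0$ and \'etale elsewhere, i.e. property \ref{propmainonepointcover3} of Question \ref{questionrefinedHKG}. That the only branching of $\phi_n$ is above $\overline{x}=0$ is built into the admissibility of $\Phi_n$ (all branch points on the generic fiber reduce to the single point $x_0$) and into the tree being supported on the unit disc; total ramification there follows because the reduction type $(g^1,\dots,g^n)$ has its only pole at $0$ and $\Phi_{n-1}$ reduces to a totally ramified cover by property \ref{propmainonepointcover1}, so the $\mathbb{Z}/p^{n-1}$-subcover of $\phi_n$ is totally ramified above $0$; since $\phi_n$ is a $\mathbb{Z}/p^n$-cover dominating a totally ramified $\mathbb{Z}/p^{n-1}$-cover, its inertia at the unique point above $0$ must be all of $\mathbb{Z}/p^n$ (the monodromy condition \ref{c7Hurwitz} at the root, $G_{\mathcal{T}}(v_0) = G$, encodes precisely this). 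Then I would identify the completion: the reduction type of the tree being $(g^1,\dots,g^n)$ means, again via Table \ref{tab:covertotree} and the uniqueness of reduced Artin--Schreier--Witt representatives over $k(x)$ (perfectness of $k$, as in \S\ref{secASWprojective}), that the formal completion of $\phi_n$ at $\overline{x}=0$ is the HKG cover attached to $(g^1,\dots,g^n)$, hence equals $k[[z_n]]/k[[x]]$; restricting to the $\mathbb{Z}/p^{n-1}$-part recovers $k[[z_{n-1}]]/k[[x]]$, which is property \ref{propmainonepointcover4}.

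Finally I would check compatibility with the given subextension: $\Phi_n$ extends $\Phi_{n-1}$ by hypothesis, so $\Phi_{n-1}$ is the $H = \mathbb{Z}/p^{n-1}$-subextension of $\Phi_n$, and its reduction is the prescribed $\phi_{n-1}$ reducing to $k[[z_{n-1}]]/k[[x]]$ — which is exactly what it means for $\Phi_n$ to solve the refined local lifting problem in the sense of Question \ref{questionrefinedHKG}. I would structure the writeup as: (i) good reduction of $\Phi_n$ from $\delta_{\mathcal{T}_n}=0$ and the conductor equality; (ii) the shape of the reduction $\phi_n$ from admissibility plus the monodromy conditions; (iii) identification of the formal completion via the reduction type and uniqueness of reduced Witt vectors; (iv) compatibility with $\Phi_{n-1}$. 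The main obstacle I anticipate is step (i): one must be careful that an \emph{abstract} \'etale Hurwitz tree with a given reduction type, once it is known to \emph{arise} from $\Phi_n$, actually forces $\Phi_n$ to have good reduction and not merely separable reduction on $\mathcal{D}[0,0]$ — this is where the full force of Proposition \ref{propgood}\ref{propgooditem1}, the slope-conductor identity of Remark \ref{remarkslopesumconductors}, and the matching of $h_{\mathcal{T}_n}(b)$ with the local conductors at the branch points must be combined, and I would model this part closely on \cite[Proposition 4.15]{2020arXiv201013614D} and on the backward reading of Proposition \ref{propobstructionlifting}.
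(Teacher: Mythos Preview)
Your proposal is sound and aligns with the paper's own treatment: the paper does not give a proof but simply states that ``the proof is similar to \cite[Proposition 4.15]{2020arXiv201013614D},'' which is precisely the reference you invoke at the end of your outline. Your four-step decomposition --- good reduction from $\delta_{\mathcal{T}_n}=0$ plus the conductor equality via Proposition~\ref{propgood}\ref{propgooditem1} and Remark~\ref{remarkslopesumconductors}, the shape of the reduction from admissibility and the monodromy data, identification of the formal completion via the reduced Witt-vector reduction type, and compatibility with $\Phi_{n-1}$ --- is exactly the backward reading of Proposition~\ref{propobstructionlifting} that the situation calls for, and your identification of step~(i) as the delicate point is accurate.
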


\subsection{The compatibility of the differential conductors}
\label{seccompatibility}
In this section, we introduce the concept of differential conductors compatibility for differential Hurwitz trees. This allows us to focus solely on the degeneration data at the vertices. This notion will also prove to be very useful in our future work on the refined local lifting problem.

\begin{definition}
\label{defnconstantcoeffepart}
Suppose $\mathcal{T}$ is a $\mathbb{Z}/p^n$-Hurwitz-tree arises from $\chi \in \cohom^1(\mathbb{K}, \mathbb{Z}/p^{n})$, which has conductor $\iota_n$ and good reduction. Suppose, moreover, that $v$ is a vertex (which can be the root) of the {\'e}tale tree $\mathcal{T}$ initiating $m$ edges $e_1, \ldots, e_m$, and has a differential conductor (or $n$-th reduction type) of the form 
\[ \omega_{\mathcal{T}}(v)= \frac{c_vdx}{\prod_{i=1}^m (x-a_i)^{h_i}}=\sum_{i=1}^m \sum_{j=1}^{h_i} \frac{c_{v,j}dx}{(x-a_i)^j} \hspace{2mm} \left(\text{or }   \sum_{j=1}^l \frac{d_j}{x^j} \text{, where } d_l \neq 0, p \nmid l \right), \]
where $c_{v, h_i} \neq 0$ for $i=1, \ldots, m$ and $a_i=[z_{e_i}]_v$. Then we call $c_v$ (or $-ld_l$) the \emph{constant coefficient at $v$}, and $c_{v,h_i}$ the \emph{constant coefficient of its $e_i$ part}.
\end{definition}

Suppose $e$ is an edge in $\mathcal{T}$ and $r \in (s(e), t(e)) \cap \mathbb{Q}$. Then it follows from Proposition \ref{propcriteriongoodnonroot} that the differential conductor at $r$ of $\chi$ is a finite sum of the form
\[ \omega_{\chi}(r)=\sum_{j \ge l} \frac{c_j dx}{(x-[z_e]_r)^j} \]
for some $l \in \mathbb{N}$ where $c_l \neq 0$.

\begin{definition}
\label{defnconstantpartonedge}
With the notation above, we say $\omega_{\mathcal{T}}$ has coefficient $c_l$ at $r$.
\end{definition}

The following result shows that the constant coefficients of the differential conductors along the Hurwitz tree constructed in \S \ref{seccovertotree} are ``compatible'' in some senses. 

\begin{theorem}
\label{theoremcompatibilitydiff}
Suppose $\mathcal{T}$ is a tree that arises from a cyclic cover $\chi$ with good reduction. Let $e$ be an edge in $\mathcal{T}$. If $\delta_{\mathcal{T}}(s(e))>0$, then the followings hold.
\begin{enumerate}[label=(\arabic*)]
    \item Suppose $s(e)< r <t(e)$ is a rational place on $e$. Then the constant coefficient at $r$ is equal to that at $t(e)$.
    \item The constant coefficient at $t(e)$ is equal to that of the $e$-part at $s(e)$.
\end{enumerate}
Suppose $\mathcal{T}$ is an {\'e}tale tree, and the $n$-th level reduced reduction type at its root is a polynomial in $x^{-1}$ of degree $l$ with coefficient $-ld_l$. If $l< p\iota_{n-1}-p$, then we only need the differential conductors starting from $v_1$ to be compatible. Otherwise, we have $\iota_n=l+1$, and the differential conductor at $v_1$ has the same coefficient with that at $v_0$.
\end{theorem}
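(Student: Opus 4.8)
The plan is to reduce everything to one claim about the annulus $A_e$ attached to an edge $e$. Since $\mathcal{T}$ arises from a cover $\chi$ with good reduction, the construction of \S\ref{seccovertotree} puts all branch points of $\chi$ at the leaves, so $A_e$ carries no branch point; the claim is that on such an annulus the leading (``constant'') coefficient of the differential conductor is \emph{locally, hence globally, constant}. Concretely, fix a $K$-point $a$ lying beyond $e$ (say $a=x_b$ with $t(e)\preceq b$). For each rational place $r$ on $e$, the closed disc $\mathcal{D}[r,a]$ has canonical reduction $\mathbb{P}^1_k$, with a coordinate $x_r$ chosen compatibly in $r$ (so that $x_{r'}=\pi_{r,r'}\,x_r$ for $r<r'$, with $\pi_{r,r'}\in K$ and $\nu(\pi_{r,r'})=r-r'$), normalised so that all branch points of $\chi$ beyond $e$ specialize to the pole of $\omega_\chi(r)$. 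Corollary \ref{cordiffgood} then gives
\[
\omega_\chi(r)=\frac{c(r)\,dx_r}{x_r^{m}},\qquad c(r)\in k^\times,\quad m:=d_{\mathcal{T}}(e)+1,
\]
the pole order being independent of $r$ (by Remark \ref{remarkslopesumconductors} and $h_{\mathcal{T}}(b)=1$ of Proposition \ref{propmixedtree}(3), since no branch point enters or leaves $\mathcal{D}[r,a]$ as $r$ ranges over $(s(e),t(e))$). With these normalisations the constant coefficient at $r$ is $c(r)$, and, by Definitions \ref{defnconstantcoeffepart}--\ref{defnconstantpartonedge}, the constant coefficient of the $e$-part at $s(e)$ and the constant coefficient at $t(e)$ are the corresponding leading coefficients of $\omega_{\mathcal{T}}$ along $A_e$ at its two endpoints. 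Thus assertions (1) and (2) together amount to saying that this leading coefficient takes the same value at every place of $A_e$.

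The core step, constancy of $c(r)$ for $s(e)<r<t(e)$, I would prove by induction on $n$ via Theorem \ref{theoremCartierprediction}; the base case $n=1$ is the heart of the matter. For $n=1$, $\chi$ is a reduced Artin-Schreier class and Kato's refined Swan conductor on $\mathcal{D}[r,a]$ is computed from the reduction of a representative minimising $\nu_{r,a}$; moving from $\mathcal{D}[r,a]$ to $\mathcal{D}[r',a]$ ($r<r'$) is the \'etale coordinate change $x_{r'}=\pi_{r,r'}x_r$, which introduces no branch point, so such a representative stays minimal and the change of variables in its leading term contributes precisely the power of $\pi_{r,r'}$ forced by the depth change $\delta_\chi(r')-\delta_\chi(r)=(r'-r)(m-1)$ of Proposition \ref{propdiffswanrateofchange}; the residual element of $k^\times$, namely $c(r)$, is unaffected. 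For the inductive step, apply the result to $\chi^p$, whose tree has the same annulus $A_e$ with the same branch-point-free property, so its level-$(n-1)$ differential conductor has constant leading coefficient on $(s(e),t(e))$; because $\delta_{n-1}$ is linear on the open edge, exactly one of the alternatives of Theorem \ref{theoremCartierprediction}(2)--(3) holds uniformly there, and in each case the relation between $\omega_n(r)$, $\omega_{n-1}(r)$ and $\mathcal{C}(\omega_n(r))$ is compatible with the coordinate change along $A_e$ in the same way as in the base case, which forces $c(r)$ to be constant. Assertion (2) then follows by letting $r\to t(e)^-$ (resp.\ $r\to s(e)^+$): since $A_e$ carries no branch point, for $r$ close to the endpoint the leading term of $\omega_\chi(r)$ stabilises to the relevant leading term of $\omega_{\mathcal{T}}(t(e))$ (resp.\ of the $e$-part of $\omega_{\mathcal{T}}(s(e))$), and the constancy just proved identifies the two constant coefficients.

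For the \'etale-root claim, $\delta_{\mathcal{T}}(v_0)=0$, and the reduction type is $\underline{f}=(f^1,\dots,f^n)$ with $f^n$ a polynomial in $x^{-1}$ of degree $l$, $p\nmid l$, and $x^{-l}$-coefficient $d_l$; thus the constant coefficient at $v_0$ is $-ld_l$ by Definition \ref{defnconstantcoeffepart}, and $v_1$ is the unique successor of $v_0$ since $\chi$ is admissible. Theorem \ref{theoremcaljumpirred} applied to $\overline{\chi}$ at $\overline{0}$ gives $\sw_{\overline{\chi}}(\overline{0})=\max\{l,\,p(\iota_{n-1}-1)\}=\max\{l,\,p\iota_{n-1}-p\}$, using $\iota_{n-1}-1=\max\{p^{n-1-j}\deg_{x^{-1}}f^j \mid 1\le j\le n-1\}$ and Proposition \ref{propgood}(2); since $p\nmid l$ these two quantities are distinct. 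If $l<p\iota_{n-1}-p$, then $\iota_n=p\iota_{n-1}-p+1\neq l+1$, the top ramification of $\overline{\chi}$ is inherited from a Witt component of index $<n$, and $\omega_{\mathcal{T}}(v_1)$ is then determined by the lower-level data via Theorem \ref{theoremCartierprediction}(2)--(3); no independent relation between $v_0$ and $v_1$ is available at the top level, and assertion (1) already supplies compatibility from $v_1$ onward. If instead $l>p\iota_{n-1}-p$, then $\iota_n=l+1$, the depth leaves $0$ at once along $e_0$, and for small $r>0$ the second part of Proposition \ref{propdiffswanrateofchange} together with the explicit shape of $\underline{f}$ at $v_0$ shows that only the leading term $-ld_l\,x^{-(l+1)}\,dx$ of $df^n$ survives in $\omega_\chi(r)$, so the leading coefficient of $\omega_\chi(r)$ equals $-ld_l$; transporting this along $e_0$ exactly as in the proof of (1) yields that the constant coefficient at $v_1$ equals $-ld_l$, the constant coefficient at $v_0$.

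The step I expect to be the main obstacle is the coordinate bookkeeping inside the inductive step above: verifying precisely that, along the branch-point-free annulus $A_e$, the power of $\pi_{r,r'}$ produced when one transports Kato's refined Swan conductor, and each of the four Cartier relations of Theorem \ref{theoremCartierprediction}, through the coordinate change $x_{r'}=\pi_{r,r'}x_r$ is cancelled exactly by the linear variation of the depth dictated by Proposition \ref{propdiffswanrateofchange}, leaving the coefficient in $k^\times$ invariant. The transition from $\delta=0$ to $\delta>0$ at the root is the subtlest instance, since there Kato's conductor is governed by the second (Swan-conductor) recipe of Proposition \ref{propdiffswanrateofchange} rather than the first and must be matched directly against $f^n$; both computations I would carry out in \S\ref{sectechnical}.
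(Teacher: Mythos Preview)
The paper does not actually prove this theorem. Immediately after the statement it says that the equal-characteristic case is \cite[Theorem 4.19]{2020arXiv201013614D} and that ``we will treat the case of mixed characteristic in an upcoming joint work with Obus.'' So there is no argument in this paper to compare your proposal against; the theorem is stated here and used, but its proof is deferred.

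That said, your outline is broadly the right shape and matches the strategy of the equal-characteristic reference: reduce to constancy of the leading coefficient of $\omega_\chi(r)$ on a branch-point-free open annulus, handle $n=1$ by a direct computation with Kato's refined Swan conductor under the coordinate change $x_{r'}=\pi_{r,r'}x_r$, and push to general $n$ through the Cartier relations of Theorem~\ref{theoremCartierprediction}. One point in your inductive step is not quite right as written: you assert that ``exactly one of the alternatives of Theorem~\ref{theoremCartierprediction}(2)--(3) holds uniformly'' on $(s(e),t(e))$ because $\delta_{n-1}$ is linear there. Linearity does not give this. The case split is governed by the sign of $\delta_{n-1}(r)-\tfrac{1}{p-1}$ (and, within (3), by whether $\delta_n(r)=p\,\delta_{n-1}(r)$ or $\delta_n(r)=\tfrac{p}{p-1}$), and since $\delta_{n-1}$ and $\delta_n$ are linear but generally nonconstant along $e$, these thresholds can be crossed at interior rational places of $e$; the relation between the leading coefficients of $\omega_n$ and $\omega_{n-1}$ then genuinely changes there. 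You would need to track the coefficient through each such transition and check it is preserved, which is more of the same ``coordinate bookkeeping'' you already flag as the main obstacle, but it is an additional case you have not accounted for.
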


The proof for the case of equal characteristic is given in \cite[Theorem 4.19]{2020arXiv201013614D}. We will treat the case of mixed characteristic in an upcoming joint work with Obus.

\begin{definition}
\label{defncompatibility}
With the above notation, we say the conductor at $t(e)$ is \emph{compatible} with one at $s(e)$ if the constant coefficient at $t(e)$ is equal to that of the $e$ part at $s(e)$. Suppose $v \le v'$, then we say $v'$ is compatible with $v$ if, given any pair of adjacent vertices $v \le v_1 \le v_2 \le v'$, $v_1$ is compatible with $v_2$.
\end{definition}

\subsubsection{Enriched Hurwitz tree}
With the notion of compatibility in place, we can enrich a Hurwitz tree $\mathcal{T}$ as follows. Recall that we defined a coordinate for a marked disc in \S \ref{seccordinatedisc}. That translates to a coordinate system for an edge $e$ of $\mathcal{T}$. We can then define the degeneration data for the rational places on $e$.

\begin{definition}
\label{defsumcondsprec}
Suppose $r$ is a rational place on an edge $e$ of a Hurwitz tree $\mathcal{T}$. We define, for an edge $e$ and $r \in [s(e),t(e)]\cap \mathbb{Q}$, the depth of $\mathcal{T}$ at the place $r$ of $e$ as follows
\[ \delta_{\mathcal{T}}(r,e):=\delta_{\mathcal{T}}(s(e))+d_{\mathcal{T}}(e)(r-s(e))= \delta_{\mathcal{T}}(t(e))-d_{\mathcal{T}}(e)(t(e)-r). \]
Additionally, we set the differential conductor at $r$ to be
\begin{equation*}
    \omega_{\mathcal{T}}(r,e) = \frac{cdx}{x^{d_{\mathcal{T}}(e)+1}},
\end{equation*}
where $c \in k^{\times}$ is the constant coefficient at $t(e)$, which is also the constant coefficient in the $e$ part at $s(e)$. We call $\mathcal{T}$ with this extra information an \emph{enriched Hurwitz tree}.
\end{definition}

The result below, which shows why it suffices to compute the degeneration data only at the vertices, follows directly from Theorem \ref{theoremcompatibilitydiff}.

\begin{proposition}
    Suppose a character $\chi \in \cohom^1(\mathbb{K}, \mathbb{Z}/p^n)$ gives rise to an enriched differential Hurwitz tree $\mathcal{T}$. Suppose $e$ is an edge in $\mathcal{T}$, and $r$ is a rational place on $e$. Then
    \begin{equation*}
        \delta_{\chi}(r,e)=\delta_{\mathcal{T}}(r,e) \text{, and }  \omega_{\chi}(r,e)=\omega_{\mathcal{T}}(r,e).
    \end{equation*}
\end{proposition}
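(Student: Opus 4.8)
The plan is to unwind the two equalities directly from the construction of $\mathcal{T}$ in \S\ref{seccovertotree} together with the compatibility result of Theorem \ref{theoremcompatibilitydiff}. Recall that the data of $\mathcal{T}$ were defined so that, at every vertex $v$, the depth $\delta_{\mathcal{T}}(v)$ and the differential conductor $\omega_{\mathcal{T}}(v)$ are literally the depth and the differential conductor of the restriction of $\chi$ to $U_v$; in particular $\delta_{\mathcal{T}}(s(e)) = \delta_{\chi}(s(e))$ and $\delta_{\mathcal{T}}(t(e)) = \delta_{\chi}(t(e))$ as values of the (piecewise-linear) depth function. Since $\chi$ has good reduction, Corollary \ref{cordiffgood} gives $\delta(\chi|_{\mathcal{D}'}) > 0$ for every proper subdisc $\mathcal{D}' \subsetneq \mathcal{D}$; hence $\delta_{\chi}(r,e) > 0$ for every rational place $r$ on $e$, and in particular $\delta_{\mathcal{T}}(s(e)) > 0$, so the hypothesis of Theorem \ref{theoremcompatibilitydiff} is in force along $e$. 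This means the depth function has no kink strictly between $s(e)$ and $t(e)$ on the annulus attached to $e$ — by Proposition \ref{propdiffswanrateofchange} a kink in the region where $\delta_{\chi} > 0$ would force a branch point of $\chi$ with valuation interior to $e$, but such a branch point would, by the stable-marking construction of \S\ref{secsemistablemodel}, have produced an extra vertex on $e$. Therefore $\delta_{\chi}(\cdot)$ is linear on $[s(e), t(e)]$ with slope $d_{\mathcal{T}}(e)$, and this linear interpolation is exactly the formula defining $\delta_{\mathcal{T}}(r,e)$ in Definition \ref{defsumcondsprec}; this proves $\delta_{\chi}(r,e) = \delta_{\mathcal{T}}(r,e)$.

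For the differential conductor, I would argue as follows. Fix a rational place $r$ with $s(e) < r < t(e)$. Since there is no branch point of $\chi$ interior to $e$ and $\delta_{\chi}(r) > 0$, Proposition \ref{propcriteriongoodnonroot} forces $\ord_{\overline{w}}(\omega_{\chi}(r,e)) \ge 0$ for every closed point $\overline{w} \ne \overline{\infty}$ of the reduction of $\mathcal{D}[r,z]$, with equality under good reduction, so $\omega_{\chi}(r,e)$ can only have a pole at $\overline{\infty}$ (in the coordinate centered at $[z_e]_r$); combined with Proposition \ref{propdiffswanrateofchange} and the slope computation above, the pole order there is exactly $d_{\mathcal{T}}(e) + 1$. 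Thus $\omega_{\chi}(r,e)$ has the shape $c_r\, dx / x^{d_{\mathcal{T}}(e)+1}$ for some $c_r \in k^\times$, matching the form prescribed in Definition \ref{defsumcondsprec}; it remains to identify the constant. Part (1) of Theorem \ref{theoremcompatibilitydiff} says the constant coefficient at $r$ equals the constant coefficient at $t(e)$, and part (2) says the latter equals the constant coefficient of the $e$-part at $s(e)$ — but that is precisely the constant $c$ used to define $\omega_{\mathcal{T}}(r,e)$. Hence $\omega_{\chi}(r,e) = \omega_{\mathcal{T}}(r,e)$. The boundary cases $r = s(e)$ and $r = t(e)$ follow by continuity of the construction (or are true by definition of $\mathcal{T}$ at the vertices).

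I expect the main subtlety to be the bookkeeping of coordinate changes along the edge and the justification that no kink or branch point hides in the interior of $e$. The latter is really the content of the construction in \S\ref{secsemistablemodel}: the semistable model was chosen relative to the branch locus of $\Phi$, so every branch point specializes to a marked point and hence sits at a leaf, not in the interior of an edge; and every kink of $\delta_{\chi}$ in the region $\delta_{\chi} > 0$ is, by Proposition \ref{propdiffswanrateofchange}, detected by $\ord_{\overline{0}}$ or $\ord_{\overline{\infty}}$ of $\omega_{\chi}$ jumping, which again corresponds to a branch point being crossed. Once this is pinned down, everything else is a direct translation through Theorem \ref{theoremcompatibilitydiff}, and the proposition is essentially a restatement of that theorem in the language of the enriched tree. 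I would keep the write-up short, citing Corollary \ref{cordiffgood}, Propositions \ref{propdiffswanrateofchange} and \ref{propcriteriongoodnonroot}, and Theorem \ref{theoremcompatibilitydiff}, and leaving the coordinate bookkeeping to the reader as routine.
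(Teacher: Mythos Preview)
Your approach is correct and is precisely the paper's: the proposition is stated there as following ``directly from Theorem \ref{theoremcompatibilitydiff},'' and your write-up simply unpacks that dependence via Corollary \ref{cordiffgood} and Propositions \ref{propdiffswanrateofchange}--\ref{propcriteriongoodnonroot}.

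Two small cleanups, neither a real gap. First, your claim that $\delta_{\mathcal{T}}(s(e))>0$ fails when $e=e_0$ is the trunk of an \'etale tree (there $s(e_0)=v_0$ and the depth is $0$); Theorem \ref{theoremcompatibilitydiff} has a separate clause for that case, so the argument still goes through, but you should invoke it rather than assert positivity. Second, in the differential-conductor paragraph the unique pole of $\omega_{\chi}(r,e)$ sits at $\overline{0}$ (the direction toward the branch points), not at $\overline{\infty}$: Proposition \ref{propcriteriongoodnonroot} gives $\ord_{\overline{w}}=0$ for $\overline{w}\neq\overline{0},\overline{\infty}$ and $\ord_{\overline{0}}=-(d_{\mathcal{T}}(e)+1)$, which is exactly your final shape $c_r\,dx/x^{d_{\mathcal{T}}(e)+1}$ --- only the sentence leading to it has the directions swapped.
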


\subsubsection{Partition of a tree by its edges and vertices}
\label{secsubtree}
Suppose we are given a Hurwitz tree $\mathcal{T}$ and $e$ is one of its edges. We define $\mathcal{T}(e)$ to be a sub-tree of $\mathcal{T}$ whose
\begin{itemize}
    %\item root has no information, whose
    %\item trunk has thickness equal to $$\sum_{e' \in I} \epsilon_{e'}, $$ where $I$ is the collection of the edges of $\mathcal{T}$ preceding $e$, whose
    \item edges, vertices, and information on them coincide with those succeeding $s(e)$, and whose
    \item differential datum at $s(e)$ is equal to the $e$-part of $\omega_{\mathcal{T}}(s(e))$.
\end{itemize}
These trees will play a critical role in the induction process in \S \ref{secproofvanishing}.

\subsection{The extension of a Hurwitz tree}
To tackle the refine local lifting problem, we need the following notion of Hurwitz tree extension.

\begin{definition}
\label{defnextendhurwitz}
Suppose we are given a $\mathbb{Z}/p^{n-1}$-tree $\mathcal{T}_{n-1}$ and a $\mathbb{Z}/p^{n}$-tree $\mathcal{T}_{n}$. We say that $\mathcal{T}_n$ \emph{extends} $\mathcal{T}_{n-1}$, denoted
by $\mathcal{T}_{n-1} \prec \mathcal{T}_n$, if the following conditions hold.
\begin{enumerate}[label=(\arabic*)]
    \item The combinatorial tree of $\mathcal{T}_{n}$ is a refinement of the one for $\mathcal{T}_{n-1}$.
    \item At each vertex $v$ of $\mathcal{T}_{n-1}$, the depth conductor and the differential conductor verify the conditions of Theorem \ref{theoremCartierprediction}.
    \item At each vertex $v$ of $\mathcal{T}_{n-1}$, if the monodromy group is $\mathbb{Z}/p^i$ ($i \le n-1$), then the monodromy group at the corresponding one on $\mathcal{T}_n$ is $\mathbb{Z}/p^{i+1}$.
    \item At each vertex $v$ of $\mathcal{T}_n$ lying at the end of an edge that is not in $\mathcal{T}_{n-1}$, the monodromy group is exactly $\mathbb{Z}/p$.
    \item Suppose $\delta_{\mathcal{T}_n}(v_0)=\delta_{\mathcal{T}_{n-1}}(v_0)=0$. Then the reduced reduction type of $\mathcal{T}_n$ is a length-$n$ Witt vector $(f^1, \ldots, f^{n-1}, f^n)$ such that $(f^1, \ldots, f^{n-1})$ is the reduced reduction type of $\mathcal{T}_{n-1}$.
\end{enumerate}
We say $\mathcal{T}_n$ extends a $\mathbb{Z}/p^i$-tree $\mathcal{T}_i$ if there exists a sequence of consecutive $n-i+1$ extending trees $\mathcal{T}_i \prec \mathcal{T}_{i+1} \prec \ldots \prec \mathcal{T}_{n-1} \prec \mathcal{T}_n$.
\end{definition}

The proof of the below result is straightforward.

\begin{proposition}
\label{propcriterionhurwitzlift}
    Given the conditions of Question \ref{questionrefinedHKG}, let $\mathcal{T}_{n-1}$ be the {\'e}tale Hurwitz tree arising from $\Phi_{n-1}$. Then there exists a character $\Phi_{n}$ that answers the question only if there exists a Hurwitz tree $\mathcal{T}_n$ that extends $\mathcal{T}_{n-1}$ and whose reduction type at the root agrees with that of $\phi_n$. 
\end{proposition}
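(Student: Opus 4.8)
The statement is a one-sided implication, so the plan is to \emph{assume} we are handed a $\mathbb{Z}/p^n$-character $\Phi_n$ answering Question~\ref{questionrefinedHKG} and to produce the required tree $\mathcal{T}_n$ directly from it. First I would let $\mathcal{T}_n$ be the differential tree attached to $\Phi_n$ by the recipe of~\S\ref{seccovertotree}, and, exactly as in the proof of Proposition~\ref{propobstructionlifting}, set $d_{\mathcal{T}_n}(e):=-\ord_{z_e}\omega_{\mathcal{T}_n}(t(e))-1$ on each edge $e$. Since $\Phi_n$ has good reduction and its reduction $\phi_n$ is totally ramified above $\overline{0}$ and \'etale elsewhere, the first part of Proposition~\ref{propgood} gives $\delta_{\Phi_n}(0)=0$; hence $\delta_{\mathcal{T}_n}(v_0)=0$, the tree is \'etale, and its reduction type at $v_0$ is, by construction, the reduced length-$n$ Witt vector representing $\phi_n$. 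This already takes care of the clause ``the reduction type at the root agrees with that of $\phi_n$.'' That $\mathcal{T}_n$ is genuinely a Hurwitz tree, i.e.\ that it satisfies~\ref{c1Hurwitz}--\ref{c7Hurwitz}, is verified verbatim as in Proposition~\ref{propobstructionlifting}: Corollary~\ref{cordiffgood} yields~\ref{c2Hurwitz} and~\ref{c3Hurwitz}, and the remaining axioms follow as in \cite{MR2534115}.

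It then remains to check the five conditions of Definition~\ref{defnextendhurwitz} for the relation $\mathcal{T}_{n-1}\prec\mathcal{T}_n$, where $\mathcal{T}_{n-1}$ is the tree arising from $\Phi_{n-1}$. Condition~(1) holds because the branch locus of the subcover $\Phi_{n-1}$ is contained in that of $\Phi_n$, so the stable model of $C$ determined by the marked disc of $\Phi_n$ dominates the one determined by that of $\Phi_{n-1}$, and passing to dual graphs turns domination into refinement. For condition~(2), note that $\Phi_{n-1}$ corresponds to the character $\chi_{n-1}=\chi_n^p$, so at a vertex $v$ of $\mathcal{T}_{n-1}$ the pair $(\delta_{\mathcal{T}_{n-1}}(v),\omega_{\mathcal{T}_{n-1}}(v))$ is the $(n-1)$-st ramification datum of $\chi_n|_{U_v}$ while the data at the corresponding vertex of $\mathcal{T}_n$ is the $n$-th one; by admissibility of $\Phi_n$ the residue extension of $\chi_n|_{U_v}$ is purely inseparable, so Theorem~\ref{theoremCartierprediction} applied with $i=n$ supplies exactly the relation that is demanded. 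Condition~(5) follows because truncating the reduced Witt vector $(g^1,\dots,g^n)$ of $\phi_n$ to $(g^1,\dots,g^{n-1})$ yields the reduced representative of $\phi_n^p=\phi_{n-1}$, which is by construction the reduction type of $\mathcal{T}_{n-1}$.

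Conditions~(3) and~(4) are the crux and should be settled through the behaviour of inertia under the quotient $\mathbb{Z}/p^n\twoheadrightarrow\mathbb{Z}/p^{n-1}$: an inertia group of order $p^j$ in $\Phi_n$ maps onto one of order $p^{j-1}$ in $\Phi_{n-1}$. Since the monodromy group at a vertex is, by Table~\ref{tab:covertotree}, the largest inertia group among the leaves it precedes, a vertex of $\mathcal{T}_{n-1}$ with monodromy $\mathbb{Z}/p^i$ acquires monodromy $\mathbb{Z}/p^{i+1}$ in $\mathcal{T}_n$ (condition~(3)); and a branch point of $\Phi_n$ that is not a branch point of $\Phi_{n-1}$ has trivial image-inertia in $\Phi_{n-1}$, hence inertia exactly $\mathbb{Z}/p$ in $\Phi_n$, which forces monodromy $\mathbb{Z}/p$ along every edge of $\mathcal{T}_n$ absent from $\mathcal{T}_{n-1}$ (condition~(4)). \textbf{The main obstacle} is the careful bookkeeping underlying these two items: one must pin down how a vertex of $\mathcal{T}_{n-1}$ is identified with a vertex of its refinement $\mathcal{T}_n$ (several components of the special fibre of $\Phi_n$ may lie over a single component of that of $\Phi_{n-1}$), and one must verify that a newly created edge cannot end ``below'' an old branch point --- it is precisely here that the good-reduction and admissibility hypotheses on $\Phi_n$ are genuinely used. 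Everything else amounts to translating the Galois-theoretic data of $\Phi_n$ into the combinatorial-differential data of $\mathcal{T}_n$ through the dictionary of~\S\ref{seccovertotree}.
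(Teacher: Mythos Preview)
Your proposal is correct and is exactly the ``straightforward'' argument the paper alludes to without writing out: the paper states only that the proof is straightforward and gives no details, and your outline---construct $\mathcal{T}_n$ from $\Phi_n$ via \S\ref{seccovertotree}, invoke Proposition~\ref{propobstructionlifting} for the Hurwitz-tree axioms, then verify Definition~\ref{defnextendhurwitz}(1)--(5) using containment of branch loci, Theorem~\ref{theoremCartierprediction}, and the behaviour of inertia under the quotient $\mathbb{Z}/p^n\twoheadrightarrow\mathbb{Z}/p^{n-1}$---is precisely that argument. The bookkeeping you flag for conditions~(3) and~(4) is real but routine, and your identification of where good reduction and admissibility enter is accurate.
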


The above proposition implies that $\chi_{n-1}$ can be extended to a deformation of $\overline{\chi}_n$ only if the resulting Hurwitz tree can be extended. This requirement is referred to as the \emph{differential Hurwitz tree obstruction} for the refined lifting problem. When the differential data is omitted, it is simply called the \emph{Hurwitz tree obstruction} for the refined lifting problem.

In certain situations, the extending tree is unique in some senses. 

\begin{example}
\label{exunique}
Suppose $p=2$ and $\overline{\chi}_2 \in \cohom^1(k(x), \mathbb{Z}/4)$ is represented by $\left( \frac{1}{x^3}, 0 \right)$. Assume $\chi_1$, a lift of $\overline{\chi}_1 = \overline{\chi}_2^2$, gives rise to the symmetrical Hurwitz tree $\mathcal{T}_1$ shown in Figure \ref{fig:uniqueextension} with degeneration data in Table \ref{tab:degenerationdataexunique}. It can be demonstrated that the tree $\mathcal{T}_2$ depicted in Figure \ref{fig:uniqueextension}, along with its vertical mirror image, are the only possible extensions of $\mathcal{T}_1$.
\begin{figure}[ht]
    \centering
    \includegraphics[width=0.40\textwidth]{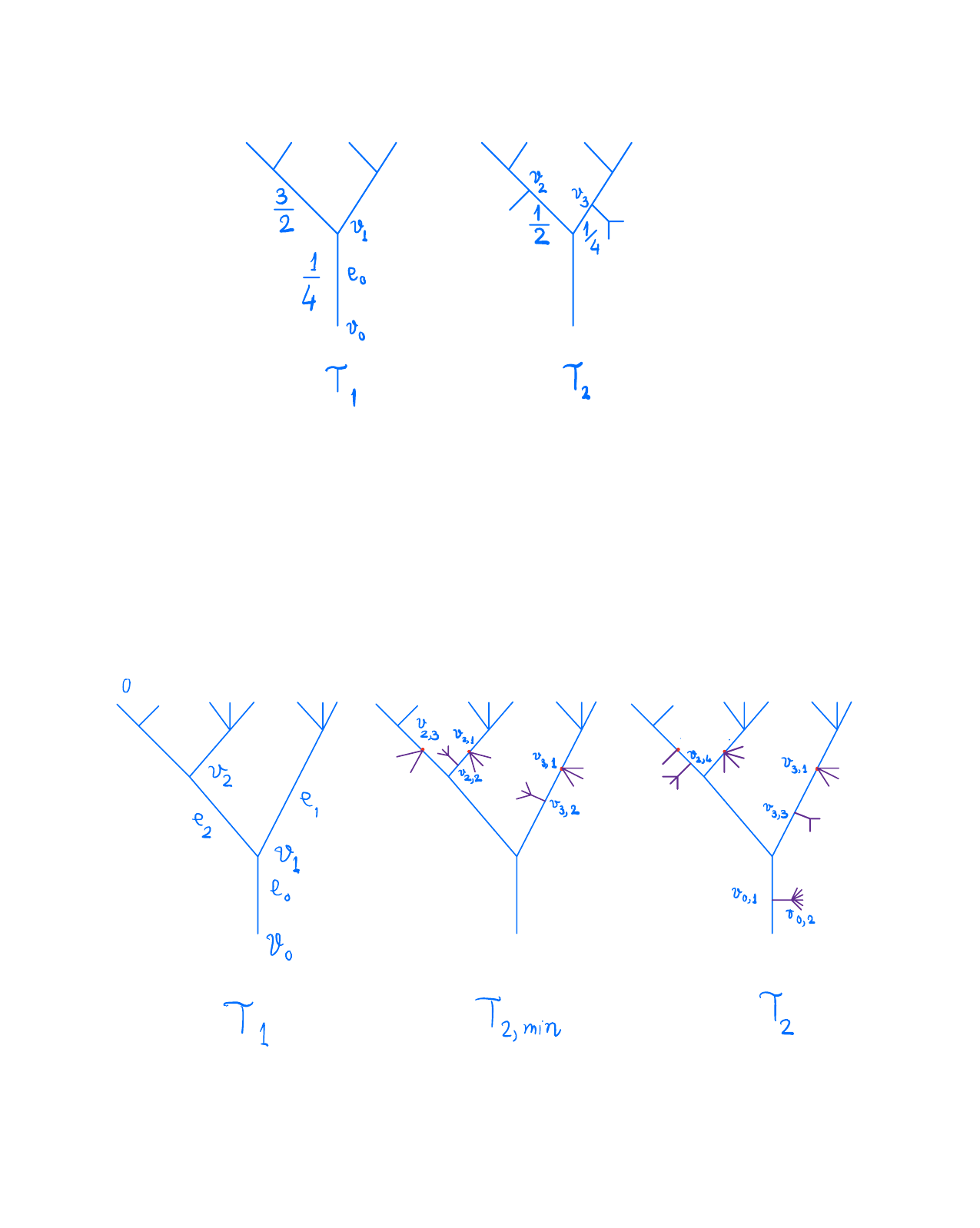}
    \caption{Extending trees in Example \ref{exunique}}
    \label{fig:uniqueextension}
\end{figure}
\begin{table}[ht]
    \centering
\begin{tabular}{ |p{1.8cm}|p{3.7cm}|p{3.7cm}|p{3.7cm}|  }
\hline
Vertices & $\mathcal{T}_1$ & $\mathcal{T}_2$  \\
\hline
\hline
$v_0$ & $\left( 0, \frac{1}{x^{3}} \right)$ & $\left( 0, \left(\frac{1}{x^{3}}, \frac{1}{x^5} \right) \right)$\\
\hline
$v_1$ & $\left( \frac{1}{2}, \frac{dx}{x^2(x+1)^2} \right)$ & $\left( 1, \frac{dx}{x^{3}(x+1)^{4}} \right)$\\
\hline
$v_{2}$ & $\left( 1, \frac{dx}{x^{2}} \right)$ & $\left( 2, \frac{dx}{x^{2}(x+1)} \right)$  \\
\hline
$v_{3}$ & $\left( \frac{3}{4}, \frac{dx}{x^{2}} \right)$ & $\left( \frac{7}{4}, \frac{dx}{x^{2}(x+1)^{2}} \right)$  \\
\hline
\end{tabular}
    \caption{Degeneration data of the trees in Figure \ref{fig:uniqueextension}}
    \label{tab:degenerationdataexunique}
\end{table}
\end{example}

\begin{remark}
    \label{remarkminimal} In the context of Proposition \ref{propcriterionhurwitzlift}, if the ramification jumps of $\phi_n$ are $(m_1, \ldots, m_n)$ and $m_n = p m_{n-1}$, then it can be straightforwardly demonstrated that the depth at a place on the trunk of $\mathcal{T}_n$ is $p$ times the depth at the corresponding place on $\mathcal{T}_{n-1}$. This phenomenon inspires our general construction.
\end{remark}

\section{The vanishing of the Hurwitz tree obstruction}
\label{secvanishofHurwitztree}

\subsection{The statement and some more examples}
\label{secex}

\begin{proposition}
\label{propvanishinghurwitz}
The Hurwitz tree obstruction for the refined lifting problem vanishes.
\end{proposition}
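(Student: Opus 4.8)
The plan is to construct the extending tree $\mathcal{T}_n$ essentially explicitly --- its combinatorial type and all of its depth and monodromy data being forced by $\mathcal{T}_{n-1}$ together with the target character $\phi_n$ --- and then to check that what we have written down satisfies the axioms of a $\mathbb{Z}/p^n$ Hurwitz tree and the extension conditions of Definition \ref{defnextendhurwitz}. Keep the notation of Question \ref{questionrefinedHKG}: $\mathcal{T}_{n-1}$ is the \'etale Hurwitz tree arising from $\Phi_{n-1}$, with combinatorial tree $T_{n-1}$, depth function $\delta_{n-1}$, reduction type $(g^1,\dots,g^{n-1})$ at the root, and conductor $\iota_{n-1}=m_{n-1}+1$; let $g^n\in k[x^{-1}]$ be the reduced $n$-th reduction type prescribed by $\phi_n$, and $\iota_n=m_n+1$ the conductor of $\phi_n$, so that $m_n\ge p m_{n-1}$ by Proposition \ref{propgood}\ref{propgooditem2}. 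We must produce $\mathcal{T}_n$ with $\mathcal{T}_{n-1}\prec\mathcal{T}_n$, with $\delta_{\mathcal{T}_n}(v_0)=0$, and with root reduction type $(g^1,\dots,g^n)$.

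First I would propagate the depth function from level $n-1$ to level $n$ along the realisation of $\mathcal{T}_{n-1}$, setting $\delta_n:=p\,\delta_{n-1}$ on the locus where $\delta_{n-1}\le\frac{1}{p-1}$ and $\delta_n:=\delta_{n-1}+1$ on the locus where $\delta_{n-1}>\frac{1}{p-1}$. This is the minimal value allowed by part (3), respectively the forced value of part (2), of Theorem \ref{theoremCartierprediction}, and it is the generalisation of the phenomenon noted in Remark \ref{remarkminimal}; the two formulas agree (both give $\frac{p}{p-1}$) on the locus $\delta_{n-1}=\frac{1}{p-1}$, so $\delta_n$ is continuous and piecewise linear, with kinks at the old vertices of $T_{n-1}$ and at the new points where $\delta_{n-1}$ meets $\frac{1}{p-1}$. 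I refine $T_{n-1}$ by inserting a vertex at each such crossing, I bump every monodromy group at a vertex of $T_{n-1}$ up one level (so the spine acquires monodromy $\mathbb{Z}/p^n$), and I observe that if $b$ is an old leaf of index $i$ then its predecessor $v$ --- where $\delta_{n-1}(v)=\frac{p}{p-1}+i-1>\frac{1}{p-1}$ --- gets $\delta_n(v)=\frac{p}{p-1}+i$, exactly the depth demanded of a vertex preceding a leaf of index $i+1$ by Proposition \ref{propmixedtree}\ref{propmixedtree1}, consistently with the monodromy $\mathbb{Z}/p^i\mapsto\mathbb{Z}/p^{i+1}$ of Definition \ref{defnextendhurwitz}(3).

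Next I would attach the new branches needed to make up the conductor. Since $\mathfrak{C}_{\mathcal{T}_n}=\sum_{b}h_{\mathcal{T}_n}(b)=\iota_n$ (Remark \ref{remarkslopesumconductors}) while all conductors at leaves equal $1$ (Proposition \ref{propmixedtree}\ref{propmixedtree3}), the tree $\mathcal{T}_n$ must carry $\iota_n-\iota_{n-1}=m_n-m_{n-1}\ge 0$ more leaves than $\mathcal{T}_{n-1}$; I append these as index-$1$ leaves, gathered in bouquets hanging off suitable (old or newly inserted) vertices of the refined tree, so that every edge not already in $T_{n-1}$ ends at a vertex of monodromy exactly $\mathbb{Z}/p$ (Definition \ref{defnextendhurwitz}(4)) and the vertex carrying a bouquet has depth $\frac{p}{p-1}$. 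The slopes $d_{\mathcal{T}_n}(e)$, the thicknesses $\epsilon_e$, and the differential conductors $\omega_{\mathcal{T}_n}(v)$ are then pinned down by \ref{c2Hurwitz}--\ref{c6Hurwitz} and \eqref{eqnslopeconductor}: the differential forms are taken with the forced pole orders (logarithmic at the vertices preceding leaves, as in Proposition \ref{propmixedtree}\ref{propmixedtree2}), and the $\epsilon_e$ --- free positive rationals --- are chosen so that $\delta_n$ has the correct increments and all depth inequalities of Theorem \ref{theoremCartierprediction} hold at the vertices of $\mathcal{T}_{n-1}$. Taking the root reduction type to be $(g^1,\dots,g^n)$ satisfies Definition \ref{defnextendhurwitz}(5), and verifying \ref{c1Hurwitz}--\ref{c7Hurwitz} for $\mathcal{T}_n$ is then routine.

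I expect the real work to be in the global bookkeeping of the third step: one has to place the new branch points and distribute the remaining slope so that simultaneously (i) the exact depth $\frac{p}{p-1}+j-1$ is attained at every vertex preceding a leaf of index $j$ (Proposition \ref{propmixedtree}\ref{propmixedtree1}), (ii) the slope on each edge agrees with the conductor carried beyond it via \eqref{eqnslopeconductor}, and (iii) the total leaf count comes out to $\iota_n$ --- and these cannot be arranged independently. I would settle this by an inner induction on the depth of the tree: scan outward from the root along the propagated $\delta_n$, and at the first rational place where $\delta_n$ would exceed the value permitted by Proposition \ref{propmixedtree}\ref{propmixedtree1} insert a branch point, splitting off a bouquet of index-$1$ leaves and thereby dropping the slope by the requisite amount; any residual conductor is absorbed into further $\mathbb{Z}/p$-bouquets near the outer ends of the tree, and the freedom in the $\epsilon_e$ keeps every depth rational. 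The essential point --- and the reason the statement holds unconditionally --- is that at no stage do we impose the compatibility of the differential conductors of Theorem \ref{theoremcompatibilitydiff} and Definition \ref{defncompatibility}; it is precisely that extra demand which can fail, so that the differential Hurwitz tree obstruction (Theorem \ref{theoremmain}) vanishes only ``most of the time''.
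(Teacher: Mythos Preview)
Your proposal is correct and follows the same construction as the paper: propagate $\delta_n=p\,\delta_{n-1}$ on the locus $\delta_{n-1}\le\tfrac1{p-1}$ and $\delta_n=\delta_{n-1}+1$ above it, refine the tree at the crossings, bump every monodromy group by one, attach the missing $\iota_n-\iota_{n-1}$ leaves as index-$p$ bouquets, and then observe that dropping the differential compatibility of \S\ref{seccompatibility} is precisely what removes the remaining obstruction --- this last point is exactly how the paper deduces Proposition~\ref{propvanishinghurwitz} from the construction in \S\ref{secproofvanishing}.

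The paper organises the same construction differently: it runs an induction on $l_{\mathcal{T}_{n-1}}$, the number of full edges between $v_0$ and the locus $\delta_{n-1}=\tfrac1{p-1}$, and at each inner vertex $v_1$ with $\delta_{n-1}(v_1)<\tfrac1{p-1}$ it passes to the subtrees $\mathcal{T}_{n-1}(e_i)$ of \S\ref{secsubtree}, distributing the target conductor among them explicitly; in the non-minimal case it appeals to Proposition~\ref{propliftbeyondminimality} rather than to an ad hoc scan. One imprecision in your third paragraph is worth flagging: the trigger for inserting a bouquet is not that the propagated $\delta_n$ overshoots the value in Proposition~\ref{propmixedtree}\ref{propmixedtree1} --- your propagation already hits those values exactly at the old leaf-predecessors --- but rather that the \emph{slope} of $\delta_n$ drops by $(p-1)\bigl(\mathfrak{C}_{\mathcal{T}_{n-1}}(v)-1\bigr)$ at each crossing $\delta_{n-1}=\tfrac1{p-1}$, and by \eqref{eqnslopeconductor} that drop must be matched by exactly that many new leaves there; any surplus from $\iota_n>p\iota_{n-1}-p+1$ is then absorbed by a further equidistant bouquet on the trunk, as in Remark~\ref{remarkequidistant}.
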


In fact, we will prove that a stronger obstruction vanishes. The proof will be given in \S \ref{secproofvanishing}. 

\begin{proposition}
\label{propvanishingdiffhurwitz}
    Suppose $\mathcal{T}_{n-1}$ is a Hurwitz tree has no vertex with depth $\frac{1}{p-1}$. Then, there exists a compatible differential {\'e}tale tree $\mathcal{T}_n$ such that $\delta_{\mathcal{T}_{n}}=p\delta_{\mathcal{T}_{n-1}}$ that extends $\mathcal{T}_{n-1}$.
\end{proposition}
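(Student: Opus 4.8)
The plan is to construct $\mathcal{T}_n$ by hand, transporting the data of $\mathcal{T}_{n-1}$ exactly one Artin--Schreier--Witt level upward via Theorem~\ref{theoremCartierprediction} and always making the \emph{smallest} admissible choice, so that depths get multiplied by $p$ wherever they are small. I would keep the combinatorial tree $T=(V,E)$ of $\mathcal{T}_{n-1}$ and refine it only along edges. Because $\mathcal{T}_{n-1}$ has no vertex of depth $\tfrac{1}{p-1}$, its continuous piecewise-linear depth function hits the value $\tfrac{1}{p-1}$ only at isolated interior places of edges; I insert a new vertex of $\mathcal{T}_n$ at each such place. This splits the vertices of $\mathcal{T}_n$ into a small-depth region $\{\delta_{\mathcal{T}_{n-1}}<\tfrac{1}{p-1}\}$ containing the root $v_0$ and a large-depth region $\{\delta_{\mathcal{T}_{n-1}}>\tfrac{1}{p-1}\}$ containing every vertex that precedes a leaf (such a vertex has depth $\ge\tfrac{p}{p-1}$ by Proposition~\ref{propmixedtree}).

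On the large-depth region Theorem~\ref{theoremCartierprediction}(2) leaves no freedom: I must set $\delta_{\mathcal{T}_n}(v)=\delta_{\mathcal{T}_{n-1}}(v)+1$ and $\omega_{\mathcal{T}_n}(v)=-\omega_{\mathcal{T}_{n-1}}(v)$, so by \ref{c3Hurwitz}--\ref{c4Hurwitz} the slopes and thicknesses are unchanged there. On the small-depth region I set $\delta_{\mathcal{T}_n}(v)=p\,\delta_{\mathcal{T}_{n-1}}(v)$ and take $\omega_{\mathcal{T}_n}(v)$ to be a Cartier preimage of $\omega_{\mathcal{T}_{n-1}}(v)$ — the configuration $p\delta_{i-1}=\delta_i<\tfrac{p}{p-1}$ of Theorem~\ref{theoremCartierprediction}(3a), legitimate precisely because $\delta_{\mathcal{T}_{n-1}}(v)<\tfrac{1}{p-1}$. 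Since a Cartier preimage of a form with a pole of order $m$ has pole order $pm-p+1$ there, reading the slopes off from \ref{c3Hurwitz}--\ref{c4Hurwitz} gives $d_{\mathcal{T}_n}(e)=p\,d_{\mathcal{T}_{n-1}}(e)$ on each small-depth edge; keeping the same thicknesses there — and splitting each threshold edge into a small-depth piece of slope $p\,d_{\mathcal{T}_{n-1}}(e)$ and a large-depth piece of slope $d_{\mathcal{T}_{n-1}}(e)$, which meet continuously at depth $\tfrac{p}{p-1}$ — makes \ref{c5Hurwitz} hold and the depth function of $\mathcal{T}_n$ equal to $p$ times that of $\mathcal{T}_{n-1}$ near the root, in particular $\delta_{\mathcal{T}_n}=p\,\delta_{\mathcal{T}_{n-1}}$. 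The monodromy labels are then forced by Definition~\ref{defnextendhurwitz} (a group $\mathbb{Z}/p^i$ is bumped to $\mathbb{Z}/p^{i+1}$, threshold vertices keep the full group), the branch points of the would-be lift invisible at level $n-1$ enter as new index-$1$ leaves whose number beyond each edge is dictated by Remark~\ref{remarkslopesumconductors} together with the relation $\iota_n=p\iota_{n-1}-p+1$ of Proposition~\ref{propgood}, and the root reduction type is extended by a suitable reduced $f^n$ of $x^{-1}$-degree less than $p\,m_{n-1}$, as suggested by Remark~\ref{remarkminimal}. It then remains to check that these combinatorial choices are mutually consistent and to verify \ref{c1Hurwitz}--\ref{c7Hurwitz} and the extension axioms, which is bookkeeping.

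The step I expect to be the genuine obstacle is pinning down the Cartier preimages $\omega_{\mathcal{T}_n}(v)$ on the small-depth region. Such a preimage is determined only modulo a logarithmic differential, and I must spend that ambiguity so that simultaneously: $\omega_{\mathcal{T}_n}(v)$ has no zero or pole on the open part $\overline{U}_v$ (condition \ref{c2Hurwitz}); its pole orders at the nodes $z_e$ agree with the slopes already prescribed; every pole created at a new branch point is simple, so that Proposition~\ref{propmixedtree}(3) yields $h_{\mathcal{T}_n}(b)=1$; and the leading coefficients obey the compatibility of Definition~\ref{defncompatibility}, i.e.\ for each edge the constant coefficient of the $e$-part of $\omega_{\mathcal{T}_n}(s(e))$ equals the constant coefficient of $\omega_{\mathcal{T}_n}(t(e))$. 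On the large-depth region this is automatic, since there $\omega_{\mathcal{T}_n}=-\omega_{\mathcal{T}_{n-1}}$ and $\mathcal{T}_{n-1}$ is compatible (e.g.\ by Theorem~\ref{theoremcompatibilitydiff} when $\mathcal{T}_{n-1}$ arises from a cover, as in the intended application); across the small-depth and threshold edges one must trace how the Cartier operator interacts with the partial-fraction ``$e$-part'' decomposition and with the extraction of leading coefficients. This is the mixed-characteristic counterpart of the argument carried out in equal characteristic in \cite[Theorem~4.19 and Proposition~4.15]{2020arXiv201013614D}, and it is exactly where the hypothesis ``no vertex of depth $\tfrac{1}{p-1}$'' is used: it keeps the construction clear of the exceptional Cartier configuration $p\delta_{i-1}=\delta_i=\tfrac{p}{p-1}$ of Theorem~\ref{theoremCartierprediction}(3d), in which $\mathcal{C}(\omega_i)=\omega_i+\omega_{i-1}$ and the leading-coefficient recursion no longer closes. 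Once these differential choices are in place, Proposition~\ref{propmixedtree} and a direct inspection of the axioms finish the proof that $\mathcal{T}_n$ is a differential étale Hurwitz tree extending $\mathcal{T}_{n-1}$ with $\delta_{\mathcal{T}_n}=p\,\delta_{\mathcal{T}_{n-1}}$.
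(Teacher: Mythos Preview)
Your overall architecture --- split at the threshold $\tfrac{1}{p-1}$, apply Theorem~\ref{theoremCartierprediction}(2) above it, and aim for case~(3a) below --- matches the paper. But the specification of $\omega_{\mathcal{T}_n}$ on the small-depth region hits a hard obstruction you have not accounted for. At a small-depth vertex $v_1$ with $m\ge 2$ outgoing edges (pole orders $l_1,\ldots,l_m$ of $\omega_{n-1}(v_1)$), your recipe forces pole order $pl_i-p+1$ at each $z_{e_i}$ so that every adjacent slope is multiplied by $p$. A form with only these poles and no finite zeros has $\ord_\infty=\sum_i(pl_i-p+1)-2=p\sum l_i-m(p-1)-2$, whereas multiplying the trunk slope by $p$ forces $\ord_\infty=p\sum l_i-p-1$; these agree only when $m=1$. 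You cannot repair this in case~(3a) by attaching new index-$p$ leaves at $v_1$: a simple pole of $\omega_n$ survives under $\mathcal{C}$ (its residue is sent to its $p$-th root), so $\mathcal{C}(\omega_n)=\omega_{n-1}$ forbids any simple pole not already present in $\omega_{n-1}$. (Incidentally, Cartier preimages differ by \emph{exact} forms, not logarithmic ones.)

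The paper gives up ``all slopes $\times p$'' at such vertices. It takes
\[
\omega_n(v_1)=\frac{c^p\,dx}{(x-a_1)^{pl_1-p+1}\prod_{i>1}(x-a_i)^{pl_i}},
\]
which is a Cartier preimage with no finite zeros; but now the subtree $\mathcal{T}_n(e_i)$ for $i>1$ carries the \emph{non-minimal} conductor $pl_i$. This forces the paper to prove a stronger inductive statement (for arbitrary prescribed conductor at the root, not just the minimal one), organised as an induction on the edge-distance $l_{\mathcal{T}_{n-1}}$ from $v_0$ to the threshold, with the excess conductor absorbed by inserting equidistant index-$p$ trees along trunks via Proposition~\ref{propliftbeyondminimality}. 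Finally, the hypothesis ``no vertex of depth $\tfrac{1}{p-1}$'' is used not to avoid case~(3d) --- that case \emph{does} occur, at every threshold place --- but to ensure each threshold lies in the interior of an edge, so that the Cartier equation $\mathcal{C}(\omega_n)=\omega_n+\omega_{n-1}$ there involves a single pole cluster ($\lvert J\rvert=1$) and is solvable by Proposition~\ref{propcartiersoln}.
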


\begin{remark}
    Note that when the depth of a place $r$ in $\mathcal{T}_{n-1}$ is $\frac{1}{p-1}$, then by Theorem \ref{theoremCartierprediction}, the depth at the same place on $\mathcal{T}_n$ is $\frac{p}{p-1}$. Additionally, $\mathcal{C}(\omega_{\mathcal{T}_n}(r)) = \omega_{\mathcal{T}_n}(r) + \omega_{\mathcal{T}_{n-1}}(r)$. It is also required that $\omega_{\mathcal{T}_n}(r)$ has no zeroes outside of $\infty$, as established in Corollary \ref{cordiffgood}. Currently, we do not have a method to effectively prove the existence of a differential $\omega_{\mathcal{T}_n}(r)$ in the general setting. This issue will be further discussed in \S \ref{sectechnical}.
\end{remark}

\begin{remark}
    In general, an extension tree is not unique. However, the extension trees we propose are not only straightforward to develop by induction but also facilitate the construction of lifts, especially in characteristic $2$. These aspects will be discussed in our future work.
\end{remark}

\begin{example}
\label{exp3}
Suppose $p=3$ and $\overline{\chi}_{2,\min}$ (resp. $\overline{\chi}_2$) in $\cohom^1(k(x), \mathbb{Z}/9)$ is represented by 
\begin{equation*}
    \left( \frac{1}{x^7}, \frac{1}{x^{19}} + \frac{1}{x} \right) \quad 
 \left(\text{resp. } \left( \frac{1}{x^7}, \frac{2}{x^{26}} + \frac{1}{x^5} \right) \right).
\end{equation*}
Furthermore, assume that a lift $\chi_1$ of $\overline{\chi}_1$ gives rise to the tree $\mathcal{T}_1$ shown on the left in Figure \ref{fig:extendingtreesdraw3} whose degeneration data are in the second column of Table \ref{tab:degenerationdatachar3}.
\begin{figure}[ht]
    \centering
    \includegraphics[width=0.85\textwidth]{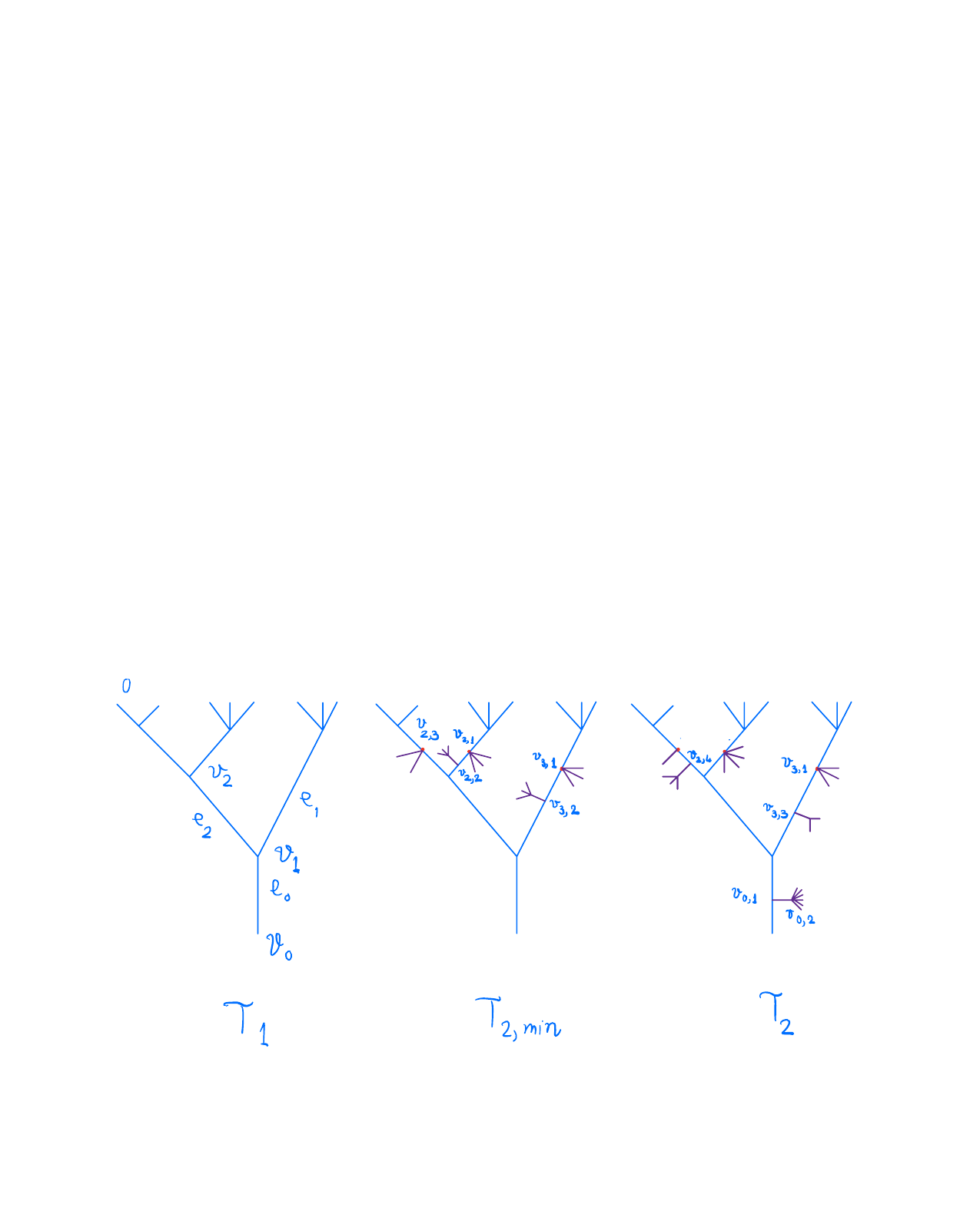}
    \caption{Extending trees in Example \ref{exp3}}
    \label{fig:extendingtreesdraw3}
\end{figure}
Note that, according to Theorem \ref{thminversedegreep}, $\mathcal{T}_1$ always arises from some lift. Furthermore, if $\delta_{\mathcal{T}_{1}}(v_2)<\frac{1}{2}$, then an extension $\mathcal{T}_{2, \min}$ (resp. $\mathcal{T}_2$) of $\mathcal{T}_1$ is also depicted in the same figure.
\begin{table}[ht]
    \centering
\begin{tabular}{ |p{1.5cm}|p{3.8cm}|p{3.8cm}|p{3.8cm}|  }
\hline
Vertices & $\mathcal{T}_1$ & $\mathcal{T}_{2,\min}$ & $\mathcal{T}_2$ \\
\hline
\hline
$v_0$ & $\left( 0, \frac{1}{x^{7}} \right)$ & $\left( 0, \left(\frac{1}{x^{7}}, \frac{1}{x^{19}}+\frac{1}{x} \right) \right)$ & $\left( 0, \left( \frac{1}{x^{7}}, \frac{2}{x^{26}}+\frac{1}{x^5} \right)\right)$ \\
\hline
$v_{0,1}$ & $\left( \delta_1(v_{0,1}), \frac{dx}{x^{8}} \right)$ & $\left( 3\delta_1(v_{0,1}),  \frac{dx}{x^{22}} \right)$ & $\left( \delta_2(v_{0,1}),  \frac{2dx}{x^{21}(x-1)^{6}} \right)$ \\
\hline
$v_{0,2}$ & $\left( \delta_1(v_{0,2}), \frac{dx}{x^{8}} \right)$ & $\left( 3\delta_{1}(v_{0,2}),  \frac{dx}{x^{22}} \right)$ & $\left( \frac{3}{2},  \frac{-dx}{x(x^5-1)} \right)$ \\
\hline
$v_1$ & $\left( \delta_1(v_1), \frac{dx}{x^{5}(x-1)^{3}} \right)$ & $\left( 3\delta_1(v_1), \frac{dx}{x^{13}(x-1)^{9}} \right)$ & $\left( 3\delta_1(v_1), \frac{-dx}{x^{13}(x-1)^{8}} \right)$ \\
\hline
$v_{3}$ & $\left( \frac{3}{2}, \frac{dx}{x^3} \right)$ & $\left( \frac{5}{2}, \frac{-dx}{x^{3}} \right)$ & $\left( \frac{5}{2}, \frac{-dx}{x^{3}} \right)$ \\
\hline
$v_{3,1}$ & $\left( \frac{1}{2}, \frac{dx}{x^3} \right)$ & $\left( \frac{3}{2}, \frac{dx}{x^{3}(x^3-x^2+1)} \right)$ & $\left( \frac{3}{2}, \frac{dx}{x^{3}(x^3-x^2+1)} \right)$ \\
\hline
$v_{3,2}$ & $\left( \delta_1(v_{3,2}), \frac{dx}{x^3} \right)$ & $\left( \delta_{2,\min}(v_{3,2}), \frac{dx}{x^{6}(x-1)^3} \right)$ & $\left( \delta_2(v_{3,2}), \frac{dx}{x^6} \right)$ \\
\hline
$v_{3,3}$ & $\left( \delta_1(v_{3,3}), \frac{dx}{x^{3}} \right)$ & $\left(  \delta_{2,\min}(v_{3,3}), \frac{dx}{x^{9}} \right)$ & $\left( \delta_2(v_{3,3}), \frac{-dx}{x^{6}(x-\sqrt{2})^{2}} \right)$ \\
\hline
$v_{2}$ & $\left( \delta_1(v_{2}), \frac{-dx}{x^{2}(x-1)^3} \right)$ & $\left(  3\delta_1(v_2), \frac{-dx}{x^4(x-1)^{9}} \right)$ & $\left( 3\delta_1(v_2), \frac{-dx}{x^{6}(x-1)^{7}} \right)$ \\
\hline
$v_{2,1}$ & $\left( \frac{1}{2}, \frac{-dx}{x^3} \right)$& $\left( \frac{3}{2}, \frac{-dx}{x^{3}(x^3-x^2+1)} \right)$ & $\left( \frac{3}{2} , \frac{-dx}{x^{3}(x^4-1)} \right)$ \\
\hline
$v_{2,2}$ & $\left( \delta_1(v_{2,2}), \frac{-dx}{x^3} \right)$ & $\left( \delta_{2,\min}(v_{2,2}), \frac{-dx}{x^{6}(x-2)^{3}} \right)$ & $\left( 3\delta_1(v_{2,2}), \frac{-dx}{x^7} \right)$ \\
\hline
$v_{2,3}$ & $\left( \frac{1}{2}, \frac{dx}{x^2} \right)$ & $\left( \frac{3}{2}, \frac{dx}{x^{2}(x^2-1)} \right)$ & $\left( \frac{3}{2}, \frac{dx}{x^{2}(x-1)} \right)$ \\
\hline
$v_{2,4}$ & $\left( \delta_1(v_{2,4}), \frac{dx}{x^2} \right)$ & $\left( 3\delta_1(v_{2,4}), \frac{dx}{x^{4}} \right)$ & $\left( \delta_2(v_{2,4}), \frac{dx}{x^{3}(x-2)^{3}} \right)$ \\
\hline
\end{tabular}
    \caption{Degeneration data of the trees in Figure \ref{fig:extendingtreesdraw3}}
    \label{tab:degenerationdatachar3}
\end{table}
\end{example}

\begin{remark}
    In the above example, one observes that at each vertex $v$ of $\mathcal{T}_1$, where $\delta_{\mathcal{T}_1}(v) \leq \frac{1}{2}$ (in particular, at $v_0$, $v_1$, and $v_2$), the corresponding depth in $\mathcal{T}_{2, \min}$ and $\mathcal{T}_2$ is exactly three times that in $\mathcal{T}_1$. This pattern also holds in our general construction.
\end{remark}

\begin{remark}
\label{remarkequidistant}
    Note that at $v_{2,2}$ and $v_{3,2}$ in $\mathcal{T}_{2, \min}$, as well as at $v_{0,1}$ and $v_{3,3}$ in $\mathcal{T}_2$, we add at each place a subtree consisting of a trunk $e$ that connects to the place at $s(e)$. Multiple edges with leaves of index $p$ follow $t(e)$. The number of leaves in each such subtree should not be congruent to $1$ modulo $p$. We call such a tree an \emph{equidistant tree} of index $p$.
\end{remark}

\begin{proposition}
    Suppose $v$ is a place on a Hurwitz tree $\mathcal{T}$ that initiates an equidistant tree of index $p$, with $e$ as its trunk and $l+1$ leaves, where $\gcd(l,p) = 1$ (hence $l+1 \not\equiv 1 \pmod{p}$). Moreover, suppose that the constant coefficient of the $e$-part at $v$ is $a \in k^{\times}$. Then, the depth at $t(e)$ must be $\frac{p}{p-1}$. In addition, the following expression for the differential conductor at $t(e)$ ensures compatibility and satisfies the conditions for the Hurwitz tree:
    \begin{equation*}
        \omega_\mathcal{T}(t(e)) = \frac{a \, dx}{x(x^l - a)}.
    \end{equation*}
\end{proposition}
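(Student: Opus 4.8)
The plan is to handle the two assertions in turn, the second one carrying essentially all of the content. For the depth: by the very definition of an equidistant tree of index $p$, every successor of the vertex $w := t(e)$ is a leaf with monodromy group $\mathbb{Z}/p$, i.e.\ a leaf of index $1$. Applying Proposition \ref{propmixedtree}\ref{propmixedtree1} with $i=1$ (equivalently \cite[Lemma 3.10(iii)]{MR2534115}) forces $\delta_{\mathcal{T}}(w) = \frac{p}{p-1}$, which is the asserted necessity statement. No more is needed here.

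For the differential conductor I would start from the observation that, since $\gcd(l,p)=1$, the derivative $l\,x^{l-1}$ of $x^l - a$ is a nonzero polynomial with no root in common with $x^l - a$ (as $a\neq 0$); hence $x^l - a$ is separable with $l$ distinct roots $\alpha_1,\dots,\alpha_l\in k^{\times}$, and $x(x^l - a)$ has exactly the $l+1$ distinct simple roots $0,\alpha_1,\dots,\alpha_l$. Consequently $\omega := \frac{a\,dx}{x(x^l-a)}$ has precisely $l+1$ poles, all simple, at these points, and — by $\sum_P \ord_P = -2$ on $\mathbb{P}^1_k$ — a unique zero, of order $l-1$, at $\infty$ (no zero at all when $l=1$). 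From this one reads off: condition \ref{c2Hurwitz} (all zeros and poles of $\omega$ sit at marked or singular points of $\overline{C}_w$); condition \ref{c6Hurwitz} with $h_{\mathcal{T}}(b)=1$ at each of the $l+1$ leaves, matching Proposition \ref{propmixedtree}\ref{propmixedtree3}; and, via \eqref{eqnslopeconductor}, the slope $d_{\mathcal{T}}(e) = (l+1)-1 = l$ on the trunk, from which \ref{c3Hurwitz}--\ref{c5Hurwitz} determine the remaining combinatorial data — in particular the thickness $\epsilon_e = \big(\frac{p}{p-1} - \delta_{\mathcal{T}}(s(e))\big)/l > 0$ and slope-$1$ leaf-edges whose thicknesses bring each leaf to depth $\frac{p}{p-1}$; condition \ref{c7Hurwitz} then also holds, since $\sum_{w\to b}[\mathbb{Z}/p:\mathbb{Z}/p] = l+1 > 1$.

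Next I would record that $\omega$ is logarithmic, as it must be at a vertex preceding index-$p$ leaves (Proposition \ref{propmixedtree}\ref{propmixedtree2}; this is also the outcome of case (3)(c) of Theorem \ref{theoremCartierprediction} once the vertices of depth $\frac1{p-1}$ are excluded): indeed the residue of $\omega$ is $-1$ at $x=0$ and $\frac{a}{l\,\alpha_i^{\,l}} = \frac1l$ at each $\alpha_i$, all lying in $\mathbb{F}_p$ (using $p\nmid l$) and summing to $-1 + l\cdot\frac1l = 0$; explicitly $\omega = \frac{dg}{g}$ with $g = \big(1 - a\,x^{-l}\big)^{1/l}$, where $\frac1l$ is the inverse of $l$ modulo $p$. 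In particular $\omega$ has no zero away from $\infty$, as required by Corollary \ref{cordiffgood}. Finally, for compatibility: writing $\omega = \frac{a\,dx}{\prod_i(x-a_i)}$ in the normal form of Definition \ref{defnconstantcoeffepart} (all exponents $1$), its constant coefficient at $t(e)$ is the numerator constant $a$, which by hypothesis equals the constant coefficient of the $e$-part at $s(e)$; hence $t(e)$ is compatible with $s(e)$ in the sense of Definition \ref{defncompatibility}, and the same reading applies when $s(e)=v_0$ through the reduction type.

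The step I expect to be the main obstacle is the order bookkeeping concealed in \ref{c3Hurwitz}--\ref{c5Hurwitz}: one has to normalise the coordinate on $\overline{C}_w$ so that the order of $\omega$ at the singular point $z_e$ is consistent with the value $d_{\mathcal{T}}(e) = l$ coming from \eqref{eqnslopeconductor}, and then verify (H3)--(H5) simultaneously along the trunk and along every leaf-edge so that the depths add up to $\frac{p}{p-1}$ exactly. The separability, residue, logarithmicity and constant-coefficient computations, by contrast, are all routine.
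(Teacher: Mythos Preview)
Your proposal is correct and follows essentially the same line as the paper's proof --- the paper simply records that $\omega_{\mathcal{T}}(t(e)) = d\log\!\big(1 - \tfrac{a}{l x^{l}}\big)$ to verify the Cartier condition of Theorem~\ref{theoremCartierprediction}, notes that $\gcd(l,p)=1$ forces $l+1$ distinct simple poles, and says compatibility is immediate from the definition. You supply the same three ingredients in considerably more detail (and your explicit $g = (1 - a x^{-l})^{1/l}$ actually gives $dg/g = \tfrac{a\,dx}{x(x^l-a)}$ on the nose), together with the depth verification via Proposition~\ref{propmixedtree}\ref{propmixedtree1} which the paper leaves implicit.
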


\begin{proof}
    The compatibility follows immediately from the definition. Additionally, it is straightforward to show that
    \begin{equation*}
        \omega_\mathcal{T}(t(e)) = d \log \left(1 - \frac{a}{lx^l}\right),
    \end{equation*}
    hence verifying the condition of Theorem \ref{theoremCartierprediction}. Furthermore, as $l$ is prime to $p$, $\omega_\mathcal{T}(t(e))$ has precisely $l+1$ distinct poles at $0$ and the roots of $x^l - a$, thereby fitting the structure of the tree.
\end{proof}

\subsection{Construction of the differential extending tree}
\label{secproofvanishing}
We dedicate this section to the proof of Proposition \ref{propvanishingdiffhurwitz}. Our focus will be solely on demonstrating the construction of the extending Hurwitz trees. Readers can readily verify that these trees meet the requirements outlined in Definition \ref{defnhurwitztree}, as well as the criteria for an extension as asserted by Theorem \ref{theoremCartierprediction} and the compatibility conditions specified in Theorem \ref{theoremcompatibilitydiff}.

Recall that we are given a $\mathbb{Z}/p^{n-1}$-tree $\mathcal{T}_{n-1}$ and an $n$-th degeneration data $(\delta_{\mathcal{T}_{n}}=p\delta_{\mathcal{T}_{n-1}}, \omega_{\mathcal{T}_{n}})$ (resp. $(0, (f^1, \ldots, f^{n-1}, f^n))$), which extends $(\delta_{\mathcal{T}_{n-1}}, \omega_{\mathcal{T}_{n-1}})$ (resp. $(0, (f^1, \ldots, f^{n-1}))$) at the root of $\mathcal{T}_{n-1}$. Let $l_{\mathcal{T}_{n-1}}$ be the maximum number of full edges between the root $v_0$ and the places with depth equal to $\frac{1}{p-1}$. We will demonstrate that one can always construct a tree $\mathcal{T}_n$ extending $\mathcal{T}_{n-1}$ with degeneration $(\delta_{\mathcal{T}_{n}}, \omega_{\mathcal{T}_{n}})$ (resp. $(0, \underline{f}_n)$), using induction on $l_{\mathcal{T}_{n-1}}$. Recall that when $\delta_{\mathcal{T}_{n}}=0$, we set $\mathfrak{C}_{\mathcal{T}_n}=m_n$, the maximum jump of $\overline{\chi}_n$ plus one, which can be computed from $\underline{f}_n := (f^1, \ldots, f^n)$ using Theorem \ref{theoremcaljumpirred}. When $\delta_n>0$, $\mathfrak{C}_{\mathcal{T}_n}$ is defined as the negative of the order of zero of $\omega_{\mathcal{T}_{n}}$. Furthermore, if the monodromy group at a vertex $v$ in $\mathcal{T}_{n-1}$ is $\mathbb{Z}/p^i$, then we set the corresponding group in $\mathcal{T}_n$ to be $\mathbb{Z}/p^{i+1}$. In this section, we use $\delta_n$ and $\omega_n$ to denote $\delta_{\mathcal{T}_n}$ and $\omega_{\mathcal{T}_n}$, respectively. Similarly, $\delta_{n-1}$ and $\omega_{n-1}$ represent $\delta_{\mathcal{T}_{n-1}}$ and $\omega_{\mathcal{T}_{n-1}}$, respectively.

\subsubsection{The base case}
We then consider the case where $\delta_{n-1}(v_1) > \frac{1}{p-1}$, i.e., $l_{\mathcal{T}_{n-1}} = 0$. This implies there is some rational place $s(e_0) < r < t(e_0)$ on the trunk $e_0$ where $\delta_{n-1}(r) = \frac{1}{p-1}$. Without loss of generality, we assume that $z_{e_0} = 0$. Theorem \ref{theoremCartierprediction} then mandates the retention of the geometric structure and degeneration data of $\mathcal{T}_n$ above $r$. This implies that for each edge $e$ in $\mathcal{T}_{n-1}$, where $s(e) = v_1$, the structure of the underlying tree of $\mathcal{T}_n(e)$ coincides with that of $\mathcal{T}_{n-1}(e)$. Furthermore, the following relationships hold:
\begin{equation*}
    \delta_n(s) = \delta_{n-1}(s) + 1, \text{ and } \omega_n(s) = -\omega_{n-1}(s),
\end{equation*}
at any rational place $s \geq r$.

Suppose first that $\mathfrak{C}_{\mathcal{T}_n}$ is minimal, i.e., $m_n := \mathfrak{C}_{\mathcal{T}_n} = p\mathfrak{C}_{\mathcal{T}_{n-1}} - p + 1 =: pm_{n-1} - p + 1$. Then we set:
\begin{equation}
    \label{eqnminimaldegencritical}
    \delta_n(r) = p\delta_{n-1}(r)=\frac{p}{p-1}, \hspace{5mm} \text{and} \hspace{5mm} \omega_n(r) = \frac{cdx}{x^{m_{n-1}} \prod_{i=1}^{(m_{n-1}-1)(p-1)}(x-a_i)}
\end{equation}
such that the $a_i$'s are distinct and $\mathcal{C}(\omega_{n-1}) = \omega_n + \omega_{n-1}$. The existence of such a differential form $\omega_n(r)$ is guaranteed by Proposition \ref{propcartiersoln}. We then add $(m_{n-1}-1)(p-1)$ edges with leaves of index $p$ at $r$, one for each $a_i$.  Finally, we set $\delta_n(s) = p\delta_{n-1}(s)$ and $\omega_n(s) = \frac{cdx}{x^{m_n}}$ for any rational place $v_0 < s < r$.

Suppose $\mathfrak{C}_{\mathcal{T}_n} = p\mathfrak{C}_{\mathcal{T}_{n-1}} - p + l + 1$ for some $l \in \mathbb{Z}_{> 0}$ that is coprime to $p$. This scenario is exemplified by $\mathcal{T}_{2,\min}(e_1)$ and $\mathcal{T}_2(e_1)$ in Example \ref{exp3}. It is then necessary that $\delta_n(s) > p \delta_{n-1}(s)$ and that $\omega_n(s)$ is exact for any $s \in (v_0, r) \cap \mathbb{Q}$. Without loss of generality, assume $l = pm + l'$, where $0 < l' < p$, and consider the $n$-th degeneration data:
\begin{equation}
\label{eqnndegen}
    \left(\delta_n > 0, \omega_n = \frac{e\, dx}{x^{m_n}} + \sum_{i < m_n} \frac{a_i \, dx}{x^i} \right)  \left( \text{ resp. } \left(0, \left(f^1, \ldots, \frac{e}{(m_n-1)x^{m_n-1}} + \sum_{i < m_n-1} \frac{a_i}{x^i} \right) \right)\right),
\end{equation}
where $e \in k^{\times}$. The existence of an extension of $\mathcal{T}_{n-1}$ on $(v_0, r]$ such that there are $m_n - m_{n-1}$ leaves of index $p$ on $[v_0, r]$ and
\begin{equation*}
    \delta_n(v_0) = \delta_n \text{ and } \omega_n(v_0) = \omega_n \text{ (resp. $\delta_n(v_0)=0$ and the reduction type is $\underline{f}_n$ at $v_0$)}
\end{equation*}
is established in Proposition \ref{propliftbeyondminimality}. Note also that there can be multiple possibilities for such an extension. This completes the base case of the induction.

\subsubsection{The induction step}
Let us now consider the case where $l_{\mathcal{T}_{n-1}} \ge 1$ and $\delta_{n-1}(v_1)<\frac{1}{p-1}$. Suppose the edges of $\mathcal{T}_{n-1}$ starting at $v_1$ are $e_1, \ldots, e_m$, where $m \ge 2$. Suppose $\mathfrak{C}_{\mathcal{T}_n}=p\mathfrak{C}_{\mathcal{T}_{n-1}}-p+1$. One may assume that
\begin{equation*}
    \omega_{n-1}(v_1)=\frac{cdx}{\prod_{i}(x-a_i)^{l_i}}= \sum_i \sum_{j=1}^{l_i} \frac{c_{i,j}}{(x-a_i)^j} =:\sum_{i} \omega_{n-1,i}(v_1),
\end{equation*}
where $c \in k^{\times}$, $a_i=[z_{e_i}]_{v_1}$, $a_i \neq 0$ for $i > 1$, and $\sum_i l_i=m_{n-1}$. Set $\delta_n(v_1)=p\delta_{n-1}(v_1)$ and
\begin{equation*}
    \omega_n(v_1)= \frac{c^pdx}{(x-a_1)^{pl_1-p+1}\prod_{i>1}(x-a_i)^{pl_i}} =:\sum_{i} \omega_{n,i}(v_1).
\end{equation*}
It is straightforward to check that $\mathcal{C}(\omega_{n}(v_1)) = \omega_{n-1}(v_1)$, hence $\mathcal{C}(\omega_{n,i}(v_1)) = \omega_{n-1,i}(v_1)$. We thus aim to construct $\mathcal{T}_{n}(e_i)$ as an extension of $\mathcal{T}_{n-1}(e_i)$ such that for all $i$ except $i=1$, $\delta_{\mathcal{T}_{n}(e_i)} = p\delta_{\mathcal{T}_{n-1}(e_i)}$, $\mathfrak{C}_{\mathcal{T}_{n}(e_i)} = p\mathfrak{C}_{\mathcal{T}_{n-1}(e_i)}$, and the degeneration type is $(\delta_n(v_1), \omega_{n,i}(v_1))$. For $i=1$, $\mathfrak{C}_{\mathcal{T}_{n}(e_1)} = p\mathfrak{C}_{\mathcal{T}_{n-1}(e_1)} - p + 1$, and the degeneration type is $(\delta_n(v_1), \omega_{n,1}(v_1))$. Since $l_{\mathcal{T}_{n-1}(e_i)} < l_{\mathcal{T}_{n-1}}$, it is feasible by induction to construct such a $\mathcal{T}_{n}(e_i)$. This concludes the induction process, as no modifications are needed in $e_0$.

Suppose $\mathfrak{C}_{\mathcal{T}_n} = p\mathfrak{C}_{\mathcal{T}_{n-1}} - p + l + 1$ for some $l \in \mathbb{Z}_{>0}$ prime to $p$. As before, assume that $l = pm + l'$, where $0 < l' < p$, and the degeneration data is as in \eqref{eqnndegen}. To construct the tree $\mathcal{T}_n$, we first set the depth at $v_1$ to be $p\delta_{n-1}(v_1)$ and the differential conductor to be
\begin{equation*}
    \omega_n(v_1) = \frac{c^p \, dx}{(-a_2)^{p-l'}(x-a_1)^{pl_1-p+1}(x-a_2)^{pl_2-p+l'} \prod_{i>2}(x-a_i)^{pl_i}} =: \sum_{i} \omega_{n,i}(v_1),
\end{equation*}
which verifies $\mathcal{C}(\omega_n(v_1)) = \omega_{n-1}(v_1)$. Just as above, we construct $\mathcal{T}_{n}(e_i)$ with degeneration type $(\delta_n(v_1), \omega_{n,i}(v_1))$ and such that for all $i \neq 1, 2$, $\mathfrak{C}_{\mathcal{T}_{n}(e_i)} = p\mathfrak{C}_{\mathcal{T}_{n-1}(e_i)}$, whereas $\mathfrak{C}_{\mathcal{T}_{n}(e_1)} = p\mathfrak{C}_{\mathcal{T}_{n-1}(e_1)} - p + 1$ and $\mathfrak{C}_{\mathcal{T}_{n}(e_2)} = p\mathfrak{C}_{\mathcal{T}_{n-1}(e_2)} - p + l'$. We therefore have $\mathfrak{C}_{\mathcal{T}_{n}}(v_1) = p\mathfrak{C}_{\mathcal{T}_{n-1}}(v_1) - 2p + l' + 1$, which is strictly smaller than $p\mathfrak{C}_{\mathcal{T}_{n-1}}(v_1) - p + 1$. Finally, partition $e_0$ into $e_{0,1}$ and $e_{0,2}$ so that
\begin{equation*}
\begin{split}
     (p\mathfrak{C}_{\mathcal{T}_{n-1}}(v_1) - p) \epsilon_{e_0} & = (\mathfrak{C}_{\mathcal{T}_n}(v_1) - 1) \epsilon_{e_{0,2}} + (\mathfrak{C}_{\mathcal{T}_n}(t(e_{0,1})) - 1) \epsilon_{e_{0,1}}, \\
    & = (p\mathfrak{C}_{\mathcal{T}_{n-1}}(v_1) - 2p + l') \epsilon_{e_{0,2}} + (p\mathfrak{C}_{\mathcal{T}_{n-1}}(v_1) - p + l) \epsilon_{e_{0,1}},
\end{split}
\end{equation*}
It is straightforward to show that $\epsilon_{e_{0,1}}$ and $\epsilon_{e_{0,2}}$ are positive. We then add an equidistant tree (see Remark \ref{remarkequidistant}) with $p(m + 1)$ leaves of index $p$ at the place $v' := t(e_{0,1})$ and
\begin{equation*}
    \omega_n(v') = \frac{e \, dx}{x^{pm_{n-1}-2p+l'+1}(x-a)^{p(m+1)}},
\end{equation*}
where $a \in k^{\times}$ is a solution to  
\begin{equation*}
    a^{p(m+1)}= \frac{e}{c^p}.
\end{equation*}
Note that the choice of $a$ ensures that the $0$-coefficient of $\omega_n(v')$ is $c^p$, thus achieving compatibility of the differential forms along the tree. That completes the induction process, thereby proving Proposition \ref{propvanishingdiffhurwitz}.
\qed

\begin{corollary}
    The non-differential Hurwitz tree obstruction vanishes for the refined local lifting problem.
\end{corollary}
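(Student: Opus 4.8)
The plan is to derive this corollary from the proof of Proposition \ref{propvanishingdiffhurwitz} by isolating what in that argument is genuinely combinatorial. Recall that the non-differential Hurwitz tree obstruction concerns only the existence of the underlying decorated tree together with the depths $\delta_{\mathcal{T}_n}(v)$, the slopes $d_{\mathcal{T}_n}(e)$, the thicknesses $\epsilon_e$, the monodromy groups $G_{\mathcal{T}_n}(v)$, and the leaf conductors $h_{\mathcal{T}_n}(b)$ (which by Proposition \ref{propmixedtree} are all equal to $1$); the differential conductors $\omega_{\mathcal{T}_n}(v) \in \Omega^1_{k(x)}$ are forgotten. In the construction of \S\ref{secproofvanishing}, the only places where the \emph{existence of a differential form} is invoked are the appeals to Proposition \ref{propcartiersoln} and Proposition \ref{propliftbeyondminimality}; every other datum is produced by direct bookkeeping, namely by the numerical constraints of Theorem \ref{theoremCartierprediction}, the slope–conductor identity of Remark \ref{remarkslopesumconductors}, and conditions \ref{c1Hurwitz}, \ref{c5Hurwitz}, \ref{c7Hurwitz}. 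Hence Proposition \ref{propvanishingdiffhurwitz} already yields the corollary whenever $\mathcal{T}_{n-1}$ has no vertex of depth $\tfrac{1}{p-1}$.

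What remains is to include the configuration excluded in Proposition \ref{propvanishingdiffhurwitz}, namely a vertex $v$ of $\mathcal{T}_{n-1}$ with $\delta_{\mathcal{T}_{n-1}}(v) = \tfrac{1}{p-1}$. Here I would observe that Theorem \ref{theoremCartierprediction} forces the depth of any extension at (the vertex corresponding to) $v$ to equal $\tfrac{p}{p-1} = p\,\delta_{\mathcal{T}_{n-1}}(v)$, so the rule $\delta_n = p\,\delta_{n-1}$ underlying \S\ref{secproofvanishing} (cf.\ Remark \ref{remarkminimal}) continues to hold verbatim. I would then re-run the induction of \S\ref{secproofvanishing}, but with $l_{\mathcal{T}_{n-1}}$ redefined to count full edges from the root to places \emph{or vertices} of depth $\tfrac{1}{p-1}$. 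At such a vertex $v$ with outgoing edges $e_1,\ldots,e_m$, I would prescribe the conductor $\mathfrak{C}_{\mathcal{T}_n}(v) \ge p\mathfrak{C}_{\mathcal{T}_{n-1}}(v) - p + 1$, distribute the requisite number of index-$p$ leaves among the subtrees $\mathcal{T}_n(e_i)$ exactly as in the induction step of \S\ref{secproofvanishing} (splitting off an equidistant subtree as in Remark \ref{remarkequidistant} when $\mathfrak{C}_{\mathcal{T}_n}(v) \not\equiv 1 \pmod p$), invoke the induction hypothesis on each $\mathcal{T}_{n-1}(e_i)$ — whose invariant $l_{\mathcal{T}_{n-1}(e_i)}$ is strictly smaller — and finally set $d_{\mathcal{T}_n}(e) = \mathfrak{C}_{\mathcal{T}_n}(t(e)) - 1$ and solve \ref{c5Hurwitz} for the thicknesses $\epsilon_e$, splitting $e_0$ if necessary as in the displayed computation there.

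The main obstacle is purely verificational rather than constructive: one must check that, with the depths now pinned down by Theorem \ref{theoremCartierprediction} and the conductors prescribed as above, the thicknesses coming out of \ref{c5Hurwitz} are strictly positive and the monodromy inequality $\sum_{v\to v'}[G_{\mathcal{T}_n}(v):G_{\mathcal{T}_n}(v')] > 1$ of \ref{c7Hurwitz} holds at every vertex — both of which reduce to exactly the arithmetic already performed in \S\ref{secproofvanishing}. Crucially, what one does \emph{not} have to verify is the existence of a differential form $\omega_{\mathcal{T}_n}(v)$ with Cartier image $\omega_{\mathcal{T}_n}(v) + \omega_{\mathcal{T}_{n-1}}(v)$ and no zeros away from $\overline{\infty}$ at a multi-edge vertex of depth $\tfrac{1}{p-1}$; that is the step flagged as open in the remark following Proposition \ref{propvanishingdiffhurwitz} and deferred to \S\ref{sectechnical}, and it is precisely the ingredient that the differential obstruction requires and the non-differential one does not. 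This is why the corollary holds unconditionally even though the stronger Proposition \ref{propvanishingdiffhurwitz} carries the extra hypothesis.
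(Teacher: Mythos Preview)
Your proposal is correct and follows essentially the same approach as the paper: both observe that the construction of \S\ref{secproofvanishing} already supplies all the combinatorial data of an extending tree, and that the only step which can fail---the existence of a differential form solving the Cartier equation at a place of depth $\tfrac{1}{p-1}$---is precisely what the non-differential obstruction does not require. The paper compresses this into a single sentence, whereas you spell out explicitly how to absorb the excluded case $\delta_{\mathcal{T}_{n-1}}(v_1)=\tfrac{1}{p-1}$ into the induction; this extra care is not needed to match the paper, but it does no harm and makes the argument more self-contained.
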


\begin{proof}
    Even when solutions to the Cartier problem are not available, the arrangement of the branch points of index $p$, as achieved in the construction above, still yields a non-differential Hurwitz tree that extends $\mathcal{T}_{n-1}$.
\end{proof}

\section{Some technical results and questions}
\label{sectechnical}
In this section, we fill in some technical details in the proof of Proposition \ref{propvanishingdiffhurwitz}. Specifically, we will demonstrate that
\begin{enumerate}[label=(T\arabic*)]
    \item \label{technical1} a differential form $\omega_n(r)$ of the type described in Equation \eqref{eqnminimaldegencritical} exists, and
    \item \label{technical2} an extension tree exists under the conditions where $l_{\mathcal{T}_n} = 0$ and $\mathfrak{C}_{\mathcal{T}_n} = p \mathfrak{C}_{\mathcal{T}_{n-1}} - p + l + 1$, where $(l, p) = 1$.
\end{enumerate}

\subsection{The Cartier Problem}
\label{secsolutioncartier}

Let us first study the existence of $\omega_n(r)$ where $\delta_{n-1}(r) = \frac{1}{p-1}$, as mentioned in \eqref{eqnminimaldegencritical}. As usual, $\omega_j(r)$, for $1 \leq j \leq n-1$, denotes the differential conductor at $r$ of $\mathcal{T}_j$. From now on, we will denote $\omega_j(r)$ simply as $\omega_j$. 

In general, we may assume that
\begin{equation*}
    \omega_{n-1} = \frac{c_{n-1}}{\prod_{j \in J} (x - e_j)^{l_{j,n-1}}},
\end{equation*}
where $J \subset \mathbb{N}$ is the collection of indices of edges at $r$ in $\mathcal{T}_{n-1}$. Note that, as $\delta_{n-1}(r) = \frac{1}{p-1}$, Proposition \ref{propmixedtree} asserts that $l_{j,n-1} > 1$ for any $j \in J$. We thus want to find 
\begin{equation}
\label{eqnomegangeneral}
    \omega_n = \frac{c \, dx}{\prod_{j \in J}(x - e_j)^{l_{j,n-1}} \prod_{i=1}^{N}(x - \overline{z}_i)},
\end{equation}
where $c \in k^\times$, $N = m_n - m_{n-1}$, and $\overline{z}_i \neq e_j$ for all $i$ and $j$, and $\overline{z}_i \neq \overline{z}_l$ for $i \neq l$, that satisfies $\mathcal{C}(\omega_n) = \omega_{n-1}$.

 To demonstrate that the Hurwitz tree obstruction vanishes in the general setting, we formulate the following so-called Cartier problem.

\begin{definition}
Suppose we are given a differential form $\omega_{n-1}$ in a differential Hurwitz tree, and $\ord_{\infty}(\omega_{n-1}) = m_{n-1} - 1$. Suppose further that $m_n \geq pm_{n-1} - p + 1$. Then the \emph{Cartier problem} for $(\omega_{n-1}, m_n)$ is the problem of finding $\omega_n \in \Omega^1_{k(x)}$ that:
\begin{enumerate}[label=(C\arabic*)]
    \item \label{CP1} has no zeros except at infinity,
    \item \label{CP2} satisfies $\ord_{\infty}(\omega_n) = m_n - 1$, and
    \item \label{CP3} satisfies $\mathcal{C}(\omega_n) = \omega_n + \omega_{n-1}$.
\end{enumerate}
\end{definition}

\begin{remark}
\label{remarkCPpoles}
    Note that we do not place conditions on the poles of $\omega_n$ in the Cartier problem. The reason is as follows. Let $i$ be the minimal integer such that $\mathcal{C}^{n-1-i}(\omega_{n-1}) \neq 0$. In this case, one may assume that
\begin{equation*}
    \omega_i = \frac{c_i \, dx}{\prod_{j \in J} (x - e_j)^{l_{j,i}}}, \ldots, \omega_{n-2} = \frac{c_{n-2} \, dx}{\prod_{j in J} (x - e_j)^{l_{j,n-1}}},
\end{equation*}
where $\sum_{j \in J} l_{j,q} = m_q$ and $\mathcal{C}(\omega_j) = \omega_{j-1}$ for $j = i+1, \ldots, n-1$. Set
\begin{equation*}
    \eta := -(\omega_i + \ldots + \omega_{n-1}) = - \sum_{l=i}^{n-1} \frac{c_l \, dx}{\prod_{j \in J} (x - e_j)^{l_{j,l}}}.
\end{equation*}
Assume further that $G \in \mathbb{K}$ extends $\chi_{n-1}$ to $\chi_n$. That is, if $\chi_{n-1}$ is given by
\begin{equation*}
    Z_{n-1}^{p^{n-1}} = G_{n-1},
\end{equation*}
then $\chi_n$ is given by
\begin{equation*}
    Z_n^{p^n} = G_1 \cdot G^{p^n}.
\end{equation*}
Let $g=[G]_{r_{n-1}}$. Then, according to Proposition \ref{propcriteriongoodnonroot}, all poles of $\omega_n$ outside the $e_j$'s originate from the zeros and poles of $G$. Additionally, if $\omega_n$ satisfies \ref{CP3}, then, by adapting the approach in \cite[\S 6.2.2]{MR3194815}, one demonstrates that
\begin{equation*}
    \omega_n = \frac{dg}{g} + \eta.
\end{equation*}
Once more, the formal proof will be given in another work. As all the poles of $\frac{dg}{g}$ must be simple, it is straightforward to demonstrate that $\ord_{e_j}(\omega_n) = -l_{j, n-1} < -1$ for any $j \in J$. This, along with conditions \ref{CP1} and \ref{CP2}, indicates that $\omega_n$ should have the form of \eqref{eqnomegangeneral}.
\end{remark}

To prove \ref{technical1}, we need the following result.

\begin{proposition}
\label{propcartiersoln}
    The Cartier problem has a solution when $\lvert J \rvert = 1$.
\end{proposition}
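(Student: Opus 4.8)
We want to find $\omega_n = \dfrac{c\,dx}{(x-e_0)^{l_0}\prod_{i=1}^N (x-\overline{z}_i)}$ (writing $J = \{0\}$, $l_0 := l_{0,n-1} > 1$, $N = m_n - m_{n-1}$) with the $\overline{z}_i$ distinct from each other and from $e_0$, satisfying the Cartier equation $\mathcal{C}(\omega_n) = \omega_n + \omega_{n-1}$ where $\omega_{n-1} = \dfrac{c_{n-1}\,dx}{(x-e_0)^{l_0}}$. After translating so that $e_0 = 0$, the plan is to work in the local field $k((x))$ at $0$: the differential $\omega_n$ has a pole of order exactly $l_0$ at $0$ (all other poles being simple), and the Cartier operator $\mathcal{C}$ acts on $k((x))\,dx$ by the classical formula $\mathcal{C}\!\left(\sum a_j x^j\,dx\right) = \sum a_{jp+p-1}^{1/p} x^j\,dx$. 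The key observation is that $\mathcal{C}$ lowers pole order roughly by a factor of $p$, so the equation $\mathcal{C}(\omega_n) = \omega_n + \omega_{n-1}$ becomes, on the principal parts at $0$, a finite triangular system for the Laurent coefficients of $\omega_n$ at $0$.

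**Reducing to a local existence statement.** First I would write $\omega_n = \eta + \dfrac{dg}{g}$ as suggested in Remark \ref{remarkCPpoles}, where $\eta = -\sum_{l=i}^{n-1}\omega_l$ is forced by the lower levels and has a single pole (at $0$), and $g \in k(x)$ is the unknown whose zeros and poles produce the simple poles $\overline{z}_i$. The condition \ref{CP3} on $\omega_n$ translates, via the identity $\mathcal{C}\!\left(\frac{dg}{g}\right) = \frac{dg}{g}$, into a single equation determining the principal part of $\frac{dg}{g}$ at $0$; concretely one needs $\mathcal{C}$ applied to the order-$\le l_0$ part of $\frac{dg}{g} = \sum_{j\ge -l_0} b_j x^j dx$ to match a prescribed Laurent tail coming from $\eta$. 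Because $\mathcal{C}$ is $p^{-1}$-semilinear and strictly decreases the order of the pole, solving for the $b_j$ with $j<0$ is a finite descending recursion: $b_{-l_0}$ is determined by an equation of the shape $b_{-l_0}^{1/p}\cdot(\text{unit}) = (\text{known}) + b_{-l_0}$ — wait, more carefully, the top coefficient satisfies a $p$-th power equation $\beta^{1/p} = \beta + \gamma$, which after raising to the $p$-th power is an Artin–Schreier-type equation $\beta = (\beta+\gamma)^p$, solvable over the algebraically closed $k$. Each subsequent coefficient is then determined linearly (the semilinear part having already been "used up"), so the principal part of $\frac{dg}{g}$ at $0$ exists and is essentially unique.

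**Producing $g$ and placing the simple poles.** Once the principal part $P(x^{-1})$ of $\frac{dg}{g}$ at $0$ is fixed, I need an actual rational function $g$ with $\ord_0\!\left(\frac{dg}{g}\right)$-part equal to $P$, with all other poles of $\frac{dg}{g}$ simple — which is automatic since $\frac{dg}{g}$ has only simple poles away from... no: $\frac{dg}{g}$ has a pole of order $> 1$ at $0$ because $g$ has higher-order zero/pole there; that is consistent with \eqref{eqnomegangeneral}. The residue of $\frac{dg}{g}$ at $0$ must be an integer (a multiplicity), and the residues at the other points $\overline{z}_i$ are $\pm 1$; by the residue theorem these must sum to zero. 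So the construction is: pick $g = \prod(x - \overline{z}_i)^{\pm 1}\cdot x^{m}$ with exponents and the $\overline{z}_i$ chosen so that (a) the prescribed principal part $P$ at $0$ is realized — this fixes $m$ and imposes finitely many polynomial conditions on the $\overline{z}_i$ through the power-sum / Newton-identity expansion of $\log g$ near $0$ — and (b) $N = m_n - m_{n-1}$ simple poles appear. Since $k$ is algebraically closed and infinite, a generic choice of the $\overline{z}_i$ satisfying the finitely many equations (which form a dominant system because the leading behavior in the $\overline{z}_i$ is nondegenerate) will be distinct and avoid $0$; one then checks $\ord_\infty(\omega_n) = m_n - 1$ by a direct degree count, giving \ref{CP1} and \ref{CP2}.

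**Main obstacle.** The crux is the solvability of the coefficient recursion, i.e.\ that the Artin–Schreier-type equation for the top principal-part coefficient of $\frac{dg}{g}$ at $0$ has a solution in $k$ and that the lower coefficients can then be matched — combined with the counting argument ensuring that exactly $N$ new simple poles can be produced with the $\overline{z}_i$ distinct and nonzero while realizing the prescribed principal part. The first part uses only that $k$ is algebraically closed (so $\wp$ is surjective) and that $l_0 > 1$ with $p \nmid$ the relevant index (guaranteed by $m_n \ge pm_{n-1} - p + 1$ and the reduced-ness hypotheses, which keep the exponent $pl_0 - p + 1$ prime to $p$, cf.\ Proposition \ref{propmixedtree}); the second part is a dimension count that I expect to be routine but tedious, and is where I would be most careful. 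I would present the local solvability as the main lemma and relegate the genericity of the $\overline{z}_i$ to a short remark.
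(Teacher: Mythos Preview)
The paper's own proof is a one-line citation to \cite[Theorem 6.6]{MR3194815}; no argument is given here. So there is nothing to compare at the level of technique --- but your attempted direct proof has a genuine gap.

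The decomposition $\omega_n = \eta + \frac{dg}{g}$ with $\eta = -\sum_{l=i}^{n-1}\omega_l$ is correct, and you observe that $\mathcal{C}(\frac{dg}{g}) = \frac{dg}{g}$. But then $\mathcal{C}(\omega_n) = \mathcal{C}(\eta) + \frac{dg}{g}$, and since $\mathcal{C}(\omega_j) = \omega_{j-1}$ (with $\mathcal{C}(\omega_i)=0$) one computes $\mathcal{C}(\eta) = \eta + \omega_{n-1}$ directly. Hence condition \ref{CP3} is \emph{automatic} for every choice of $g$: there is no ``single equation determining the principal part of $\frac{dg}{g}$ at $0$'', no recursion, and no Artin--Schreier equation to solve. (Independently, $\frac{dg}{g}$ is logarithmic, so its only possible pole at $0$ is simple with integer residue; there are no higher-order coefficients $b_{-l_0}$ to determine.)

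The entire content of the Cartier problem therefore lies in \ref{CP1}: for which $g$ does $\eta + \frac{dg}{g}$ have \emph{no finite zeros}? This is not a degree count --- a degree count only tells you zeros minus poles equals $-2$ on $\mathbb{P}^1$, not that the zeros are all at $\infty$. Producing $N$ distinct $\overline{z}_i$ so that the rational $1$-form $\eta + \frac{dg}{g}$ has constant numerator (after clearing denominators) is the actual obstruction, and your proposal does not address it. This is precisely what Obus--Wewers handle in \cite[\S 6]{MR3194815} (the ``no finite zeros'' condition is where the work is), and why the paper defers to that reference rather than giving a short argument.
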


\begin{proof}
    This is established in \cite[Theorem 6.6]{MR3194815}.
\end{proof}

To solve \ref{technical2}, we adapt a technical result from \cite{MR3194815}.

\begin{proposition}
\label{propliftbeyondminimality}
Suppose we are given a Hurwitz tree with $\delta_{\mathcal{T}_{n-1}}(v_1) > \frac{1}{p-1}$. Then, for any $m_n = pm_{n-1} + l$, where $(l, p) = 1$, a tree $\mathcal{T}_n$ of conductor $m_n + 1$ that lifts $\mathcal{T}_{n-1}$ and satisfies $\delta_{\mathcal{T}_n}(v_0) = p \delta_{\mathcal{T}_{n-1}}(v_0)$ exists.
\end{proposition}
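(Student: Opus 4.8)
The plan is to split $\mathcal{T}_n$ along the place $r$ on the trunk $e_0$ at which $\delta_{\mathcal{T}_{n-1}}(r)=\frac{1}{p-1}$ --- such an $r$ exists because $\delta_{\mathcal{T}_{n-1}}$ increases continuously along $e_0$ from a value in $(0,\frac{1}{p-1}]$ up to $\delta_{\mathcal{T}_{n-1}}(v_1)>\frac{1}{p-1}$ --- and to build the two halves separately; normalize $z_{e_0}=0$. The half beyond $r$ is \emph{forced}: every place past $r$ has depth $>\frac{1}{p-1}$, so Theorem \ref{theoremCartierprediction}(2) dictates that the combinatorial subtree there coincides with that of $\mathcal{T}_{n-1}$, with $\delta_{\mathcal{T}_n}=\delta_{\mathcal{T}_{n-1}}+1$, $\omega_{\mathcal{T}_n}=-\omega_{\mathcal{T}_{n-1}}$, and each monodromy $\mathbb{Z}/p^i$ replaced by $\mathbb{Z}/p^{i+1}$. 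Since $-\omega$ has the same divisor as $\omega$, conditions \ref{c2Hurwitz}--\ref{c6Hurwitz} are inherited and \ref{c7Hurwitz} survives the index shift; at $r$ this forces $\delta_{\mathcal{T}_n}(r)=\frac{p}{p-1}$. So it remains to realize the segment $[v_0,r]$ with total conductor $m_n+1$ and $\delta_{\mathcal{T}_n}(v_0)=p\,\delta_{\mathcal{T}_{n-1}}(v_0)$.

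Since the part beyond $r$ already carries conductor $\mathfrak{C}_{\mathcal{T}_{n-1}}$, exactly $m_n-m_{n-1}=(p-1)m_{n-1}+l$ further index-$p$ leaves (each of conductor $1$; Proposition \ref{propmixedtree} and Remark \ref{remarkslopesumconductors}) must be placed on $[v_0,r]$. As an index-$p$ branch point sits only on a component of depth $\frac{p}{p-1}$ (Proposition \ref{propmixedtree}), I would arrange them as a ``minimal core'' at $r$ carrying $(p-1)(m_{n-1}-1)$ leaves --- built from a solution of the Cartier problem for the single-pole form $\omega_{\mathcal{T}_{n-1}}(r)$, which exists by Proposition \ref{propcartiersoln} --- together with finitely many equidistant subtrees (Remark \ref{remarkequidistant}) hung off interior places $v_0<r_1<\cdots<r_t<r$, absorbing the remaining $l+p-1$ leaves. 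These split into subtrees with leaf counts $\not\equiv 1\pmod p$ because $l+p-1\ge 2$ and any integer $\ge 2$ is a sum of integers $\not\equiv 1\pmod p$, the lone exception --- $p=2$ with an odd summand --- being excluded since $(l,p)=1$ forces $l+p-1=l+1$ to be even when $p=2$. The coprimality hypothesis is exactly what makes $\iota_n=m_n+1\not\equiv 1\pmod p$, i.e.\ $m_n$ an admissible non-minimal conductor (Proposition \ref{propgood}, item \ref{propgooditem2}), and what keeps this bookkeeping consistent. Finally one solves for positive sub-thicknesses of $e_0$: the depth on $[v_0,r]$ is piecewise linear with slope starting at $m_n$, dropping by the leaf count of the $i$-th subtree as one passes $r_i$, and equal to $m_{n-1}$ just before $r$; pinning its endpoint values to $p\,\delta_{\mathcal{T}_{n-1}}(v_0)$ and $\frac{p}{p-1}$ is one linear constraint, solvable over the positive rationals after inserting enough subdivision points. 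This layout is the one carried out in \cite[\S6]{MR3194815}, which I would follow with only cosmetic changes.

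The differential data are then read off: $c\,dx/x^{d+1}$ on each spine sub-edge (with $d$ the running conductor and $c$ fixed by compatibility, Definition \ref{defncompatibility}); the explicit logarithmic form $a_i\,dx/\bigl(x(x^{k_i-1}-a_i)\bigr)$ of the Proposition immediately following Remark \ref{remarkequidistant} at the top vertex of the $i$-th equidistant subtree, with $a_i$ chosen so that its residue at $0$ matches the running constant coefficient; the Cartier-problem solution at $r$; and $\omega_{\mathcal{T}_n}(v_0)$ of the shape \eqref{eqnndegen} at $v_0$. One then checks, level by level, that consecutive data satisfy the relevant case of Theorem \ref{theoremCartierprediction} --- case (3a) at $v_0$, case (3b) (exactness, $\mathcal{C}(\omega_{\mathcal{T}_n})=0$) on the interior of $(v_0,r)$, and case (3d), $\mathcal{C}(\omega_{\mathcal{T}_n}(r))=\omega_{\mathcal{T}_n}(r)+\omega_{\mathcal{T}_{n-1}}(r)$, at $r$ --- and that \ref{c2Hurwitz}--\ref{c6Hurwitz} and the compatibility of Theorem \ref{theoremcompatibilitydiff} hold; these are routine and may be omitted, as permitted at the start of \S\ref{secproofvanishing}.

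The one genuinely delicate point --- producing a differential that is logarithmic on the leaf components, exact along the spine, and Cartier-compatible with $\mathcal{T}_{n-1}$ --- is precisely the Cartier problem of \S\ref{secsolutioncartier}; but the ``core plus equidistant subtrees'' arrangement dissolves it into the single-pole case already settled in Proposition \ref{propcartiersoln} and the explicit $d\log$ forms, so no new analytic input is needed. I therefore expect the main obstacle to be organizational rather than analytic: choosing the leaf counts, the subdivision thicknesses, and the local differential forms so that the Hurwitz conditions of Definition \ref{defnhurwitztree}, the extension conditions of Definition \ref{defnextendhurwitz}, and the compatibility of Theorem \ref{theoremcompatibilitydiff} all hold at once --- and it is here that the adaptation of \cite[\S6]{MR3194815} does the work.
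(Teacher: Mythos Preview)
Your approach is sound but takes a genuinely different route from the paper's. The paper's proof is indirect and brief: in the \'etale case $\delta_{\mathcal{T}_{n-1}}(v_0)=0$, it simply invokes \cite[Proposition~6.19]{MR3194815} to produce an actual $\mathbb{Z}/p^n$-lift of the cover underlying $\mathcal{T}_{n-1}$ with the prescribed conductor, and the desired $\mathcal{T}_n$ is then the Hurwitz tree of that lift. For $\delta_{\mathcal{T}_{n-1}}(v_0)>0$, the paper lengthens the trunk $e_0'$ by $\delta_{\mathcal{T}_{n-1}}(v_0)/(\mathfrak{C}_{\mathcal{T}_{n-1}}-1)$ so that the depth at the new root drops to $0$, applies the \'etale case to this auxiliary tree $\tilde{\mathcal{T}}_{n-1}$, and then rescales the resulting sub-thicknesses along $e_0'$ by $\epsilon_{e_0'}/\tilde{\epsilon}_{e_0'}$ to obtain $\mathcal{T}_n$. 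In short, the paper never builds the tree combinatorially --- it extracts it from a genuine cover and performs one linear adjustment.

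Your route --- the forced half beyond $r$, a Cartier core at $r$, equidistant subtrees hung along $(v_0,r)$, and a thickness solve --- is the explicit construction that \S\ref{secproofvanishing} gestures at but ultimately defers to this proposition. Its merit is that it stays entirely within the Hurwitz-tree formalism and does not presuppose that $\mathcal{T}_{n-1}$ arises from a cover; its cost is exactly the bookkeeping you acknowledge. Both arguments lean on \cite{MR3194815}, but the paper uses Proposition~6.19 there as a black box, while you adapt that section's internal tree-building machinery.
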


\begin{proof} 
Let $e_0'$ to be the edge of $\mathcal{T}_{n-1}$ from $v_0$ to the place with depth $\frac{1}{p-1}$. Suppose that $\delta_{\mathcal{T}_{n-1}}(v_0)=0$. Then, as one can always construct a lift of the associated $\mathbb{Z}/p^{n-1}$-cover by \cite[Proposition 6.19]{MR3194815}, there exists a lifted tree $\mathcal{T}_n$. When $\delta_{\mathcal{T}_{n-1}}(v_0)>0$, we can first construct a tree $\tilde{\mathcal{T}}_{n-1}$, whose data agree with that of $\mathcal{T}_{n-1}$ everywhere but the thickness $\epsilon_{e_0'}$, which becomes $\tilde{\epsilon}_{e_0'}= \epsilon_{e_0'}+\delta_{\mathcal{T}_{n-1}}(v_0)/(\mathfrak{C}(\mathcal{T}_{n-1})-1)$, and $v_0$, which has depth $0$ and reduction data compatible with that at $v_1$. We then can construct a tree $\tilde{\mathcal{T}}_{n}$ that lifts $\tilde{\mathcal{T}}_{n-1}$ as before. Finally, the extension tree $\mathcal{T}_{n}$ coincides with $\tilde{\mathcal{T}}_{n}$ everywhere but $e_0$, where the thicknesses of the edges of $\tilde{\mathcal{T}}_{n}$ partitioning $e_0'$ are scaled by $\epsilon_{e_0'}/\tilde{\epsilon}_{e_0'}$.
\end{proof}

To demonstrate that the differential Hurwitz tree obstruction generally vanishes, it suffices to extend Proposition \ref{propliftbeyondminimality} to the case where $\delta_{\mathcal{T}_{n-1}}(v_1) = \frac{1}{p-1}$, as this is the only scenario not considered in \S \ref{secproofvanishing}. Indeed, if the Cartier problem invariably has a solution and the function $G$, extending $\chi_{n-1}$ to $\chi_n$ (mentioned in Remark \ref{remarkCPpoles}), is of ``simple type'' (which is always the case when $p = 2$), then Proposition \ref{propliftbeyondminimality} can be generalized as desired. This topic will also be discussed further in our future work.

%    Bibliographies can be prepared with BibTeX using amsplain,
%    amsalpha, or (for "historical" overviews) natbib style.
\bibliographystyle{alpha}
\bibliography{mybib}

\begin{thebibliography}{{Dan}20b}

\bibitem[{Ake}16]{Akeyrthesis}
{Akeyr, Garnet Jonathan}.
\newblock Hurwitz trees and tropical geometry.
\newblock Master's thesis, University of Waterloo, 2016.

\bibitem[BL84]{MR767194}
Siegfried Bosch and Werner L\"{u}tkebohmert.
\newblock Stable reduction and uniformization of abelian varieties. {II}.
\newblock {\em Invent. Math.}, 78(2):257--297, 1984.

\bibitem[BL85]{MR774362}
Siegfried Bosch and Werner L\"{u}tkebohmert.
\newblock Stable reduction and uniformization of abelian varieties. {I}.
\newblock {\em Math. Ann.}, 270(3):349--379, 1985.

\bibitem[BL93]{MR1202394}
Siegfried Bosch and Werner L\"utkebohmert.
\newblock Formal and rigid geometry. {I}. {R}igid spaces.
\newblock {\em Math. Ann.}, 295(2):291--317, 1993.

\bibitem[BM00]{MR1767273}
Jos\'e Bertin and Ariane M\'ezard.
\newblock D\'eformations formelles des rev\^etements sauvagement ramifi\'es de courbes alg\'ebriques.
\newblock {\em Invent. Math.}, 141(1):195--238, 2000.

\bibitem[BW06]{MR2254623}
Irene~I. Bouw and Stefan Wewers.
\newblock The local lifting problem for dihedral groups.
\newblock {\em Duke Math. J.}, 134(3):421--452, 2006.

\bibitem[BW09]{MR2534115}
Louis~Hugo Brewis and Stefan Wewers.
\newblock Artin characters, {H}urwitz trees and the lifting problem.
\newblock {\em Math. Ann.}, 345(3):711--730, 2009.

\bibitem[{Dan}20a]{2020arXiv201013614D}
Huy {Dang}.
\newblock {Deforming cyclic covers in towers}.
\newblock {\em arXiv e-prints}, page arXiv:2010.13614, October 2020.

\bibitem[{Dan}20b]{2020arXiv200203719D}
Huy {Dang}.
\newblock Hurwitz trees and deformations of {A}rtin-{S}chreier covers.
\newblock {\em arXiv e-prints}, page arXiv:2002.03719, February 2020.

\bibitem[DH24]{10.1093/imrn/rnae060}
Huy Dang and Matthias Hippold.
\newblock {The Moduli Space of Cyclic Covers in Positive Characteristic}.
\newblock {\em International Mathematics Research Notices}, page rnae060, 04 2024.

\bibitem[Gar02]{MR1935414}
Marco~A. Garuti.
\newblock Linear systems attached to cyclic inertia.
\newblock In {\em Arithmetic fundamental groups and noncommutative algebra ({B}erkeley, {CA}, 1999)}, volume~70 of {\em Proc. Sympos. Pure Math.}, pages 377--386. Amer. Math. Soc., Providence, RI, 2002.

\bibitem[GM98]{MR1645000}
Barry Green and Michel Matignon.
\newblock Liftings of {G}alois covers of smooth curves.
\newblock {\em Compositio Math.}, 113(3):237--272, 1998.

\bibitem[Gro63]{MR0217087}
Alexander Grothendieck.
\newblock {\em Rev\^{e}tements \'{e}tales et groupe fondamental. {F}asc. {I}: {E}xpos\'{e}s 1 \`a 5}, volume 1960/61 of {\em S\'{e}minaire de G\'{e}om\'{e}trie Alg\'{e}brique}.
\newblock Institut des Hautes \'{E}tudes Scientifiques, Paris, 1963.

\bibitem[Har80]{MR579791}
David Harbater.
\newblock Moduli of {$p$}-covers of curves.
\newblock {\em Comm. Algebra}, 8(12):1095--1122, 1980.

\bibitem[{Hen}00]{2000math.....11098H}
Y.~{Henrio}.
\newblock {Arbres de Hurwitz et automorphismes d'ordre p des disques et des couronnes p-adiques formels}.
\newblock {\em ArXiv Mathematics e-prints}, November 2000.

\bibitem[Kat86]{MR867916}
Nicholas~M. Katz.
\newblock Local-to-global extensions of representations of fundamental groups.
\newblock {\em Ann. Inst. Fourier (Grenoble)}, 36(4):69--106, 1986.

\bibitem[Obu12]{MR3051249}
Andrew Obus.
\newblock The (local) lifting problem for curves.
\newblock In {\em Galois-{T}eichm\"uller theory and arithmetic geometry}, volume~63 of {\em Adv. Stud. Pure Math.}, pages 359--412. Math. Soc. Japan, Tokyo, 2012.

\bibitem[Oor87]{MR927980}
Frans Oort.
\newblock Lifting algebraic curves, abelian varieties, and their endomorphisms to characteristic zero.
\newblock In {\em Algebraic geometry, {B}owdoin, 1985 ({B}runswick, {M}aine, 1985)}, volume~46 of {\em Proc. Sympos. Pure Math.}, pages 165--195. Amer. Math. Soc., Providence, RI, 1987.

\bibitem[OW14]{MR3194815}
Andrew Obus and Stefan Wewers.
\newblock Cyclic extensions and the local lifting problem.
\newblock {\em Ann. of Math. (2)}, 180(1):233--284, 2014.

\bibitem[Pop14]{MR3194816}
Florian Pop.
\newblock The {O}ort conjecture on lifting covers of curves.
\newblock {\em Ann. of Math. (2)}, 180(1):285--322, 2014.

\bibitem[Pri03]{MR2016596}
Rachel~J. Pries.
\newblock Conductors of wildly ramified covers. {III}.
\newblock {\em Pacific J. Math.}, 211(1):163--182, 2003.

\bibitem[Sa{\"i}12]{MR3051252}
Mohamed Sa{\"i}di.
\newblock Fake liftings of {G}alois covers between smooth curves.
\newblock In {\em Galois-{T}eichm{u}ller theory and arithmetic geometry}, volume~63 of {\em Adv. Stud. Pure Math.}, pages 457--501. Math. Soc. Japan, Tokyo, 2012.

\bibitem[Ser79]{MR554237}
Jean-Pierre Serre.
\newblock {\em Local fields}, volume~67 of {\em Graduate Texts in Mathematics}.
\newblock Springer-Verlag, New York-Berlin, 1979.
\newblock Translated from the French by Marvin Jay Greenberg.

\bibitem[SOS89]{MR1011987}
T.~Sekiguchi, F.~Oort, and N.~Suwa.
\newblock On the deformation of {A}rtin-{S}chreier to {K}ummer.
\newblock {\em Ann. Sci. \'Ecole Norm. Sup. (4)}, 22(3):345--375, 1989.

\bibitem[Wew14]{MR3167623}
Stefan Wewers.
\newblock Fiercely ramified cyclic extensions of {$p$}-adic fields with imperfect residue field.
\newblock {\em Manuscripta Math.}, 143(3-4):445--472, 2014.

\end{thebibliography}
%    Insert the bibliography data here.

\end{document}